\newtheorem{thm}{Theorem}[section]
\newtheorem{lem}[thm]{Lemma}
\newtheorem{prop}[thm]{Proposition}
\newtheorem{cor}[thm]{Corollary}
\theoremstyle{definition}
\newtheorem{defn}[thm]{Definition}
\newtheorem{ex}[thm]{Example}
\newtheorem{rem}[thm]{Remark}
\DeclareMathOperator{\R}{\mathbb R}
\DeclareMathOperator{\C}{\mathbb C}
\DeclareMathOperator{\N}{\mathbb N}
\DeclareMathOperator{\Z}{\mathcal Z}
\DeclareMathOperator{\V}{\mathcal V}
\DeclareMathOperator{\I}{\mathcal I}
\DeclareMathOperator{\D}{\mathcal D}
\DeclareMathOperator{\SR}{\mathcal K^0}
\DeclareMathOperator{\SRR}{\mathcal R^0}
\DeclareMathOperator{\SO}{\mathcal O}
\DeclareMathOperator{\K}{\mathcal K}
\DeclareMathOperator{\supp}{supp}
\DeclareMathOperator{\Cent}{Cent}
\DeclareMathOperator{\RCent}{C-Spec}
\DeclareMathOperator{\RSp}{R-Spec}
\DeclareMathOperator{\Sp}{Spec}
\DeclareMathOperator{\ReSp}{R-Spec}
\DeclareMathOperator{\p}{\mathfrak{p}}
\DeclareMathOperator{\q}{{\mathfrak q}}
\DeclareMathOperator{\ir}{{\mathfrak r}}
\DeclareMathOperator{\Max}{\rm Max}
\DeclareMathOperator{\ReMax}{\rm R-Max}
\DeclareMathOperator{\CentMax}{\rm C-Max}
\DeclareMathOperator{\m}{\mathfrak m}
\def\JRadRe {{\rm{Rad}^R}}
\def\JRadCe {{\rm{Rad}^C}}
\def\JRad {{\rm{Rad}}}
\def\RadRe {\sqrt[R]}
\def\Rad {\sqrt}
\def\RadCe {\sqrt[C]}
\begin{document}

\title[\tiny{Central algebraic geometry and seminormality}]{Central
  algebraic geometry and seminormality}
\author{Jean-Philippe Monnier}
\thanks{The author benefit from the support of the Centre Henri Lebesgue ANR-11-LABX-0020-01 and from the project ENUMGEOM ANR-18-CE40-0009.}

\address{Jean-Philippe Monnier
Univ Angers, CNRS, LAREMA, SFR MATHSTIC, F-49000 Angers, France}
\email{jean-philippe.monnier@univ-angers.fr}

\date\today
\subjclass[2010]{14P99,13B22,26C15}
\keywords{real algebraic geometry, normalization, continuous rational functions, regulous functions, seminormalization}

\begin{abstract} We develop the theory of central ideals on commutative rings. We introduce and study the central seminormalization of a ring in another one. This seminormalization is related to the theory of regulous functions on real algebraic varieties. We provide a construction of the central seminormalization by a decomposition theorem in elementary central gluings. The existence of a central seminormalization is established in the affine case and for real schemes.
\end{abstract}

\maketitle

\section{Introduction}

The present paper is devoted to the study of the seminormalization in the real setting. The operation of seminormalization was formally introduced around fifty years ago first
in the case of analytic spaces by Andreotti and Norguet \cite{AN} and later in the abstract scheme setting by Andreotti and Bombieri \cite{AB}. The notion arose from a classification problem. For algebraic varieties, the seminormalization  of $X$ in $Y$ is basically the biggest intermediate variety which is bijective with $X$. Recently, the concept of seminormalization appears in the study of singularities of algebraic varieties, in particular in the minimal model program of Koll\'ar and Kov\'acs (see \cite{KoKo} and \cite{Ko}).

Around 1970 Traverso \cite{T} introduced the closely related notion of the seminormalization $A_B^*$ of a commutative ring $A$ in an integral extension $B$. The idea is to glue together the prime ideals of $B$ lying over the same prime ideal of $A$. The seminormalization $A_B^*$ has the property that it is the biggest extension of $A$ in a subring $C$ of $B$ which is subintegral i.e such that the map $\Sp C\to\Sp A$ is bijective and equiresidual (it gives isomorphisms between the residue fields). For geometric rings all these notions of seminormalizations are equivalent and are strongly related with the Grothendieck notion of universal homeomorphism \cite[I 3.8]{Gr1}. We refer to Vitulli \cite{V} for a survey on seminormality for commutative rings and algebraic varieties. See also \cite{GT}, \cite{LV}, \cite{Sw} and \cite{Vcor} for more detailed informations on seminormalization.

For an integral extension $B$ of a commutative ring $A$, using the classical notion of real ideal \cite{BCR}, we may try to copy Traverso's construction by gluing together all the real prime ideals of $B$ lying over the same real prime ideal of $A$. Unfortunately it doesn't give an acceptable notion of real seminormalization since real prime ideals do not satisfy a lying-over property for integral extensions. Normalization in the real setting is deeply studied in \cite{FMQ-futur}, the aim of the paper is to develop the theory of central seminormalization introduced in \cite{FMQ-futur2}.

The paper is organized as follows. In the second section we recall some classical results on real algebra and more precisely about the theory of real ideals as it is developed in \cite{BCR}. In the third section we introduce the notion of 
central ideal: If $I$ is an ideal of an integral domain $A$ with fraction field $\K(A)$, we say that $I$ is a central ideal if $\forall a\in A$, $\forall b\in\sum\K(A)^2$ ($\sum\K(A)^2$ is the set of sum of squares of elements in $\K(A)$) we have
$$a+b\in I\Rightarrow a\in I.$$
We develop the theory of central ideals
similarly to the theory of real ideals done in \cite{BCR} proving in particular that an ideal is central if and only if it is equal to its central radical (the intersection of the central prime ideals containing it). We prove that the notion of central ideal developed here is compatible for geometric rings (coordinate rings of affine variety over $\R$) with the Central Nullstellensatz 
\cite[Cor. 7.6.6]{BCR} and also coincides for prime ideals with that of \cite{FMQ-futur2}. For a domain $A$, the central spectrum of $A$ (the set of central prime ideals of $A$) is denoted by $\RCent A$. For an extension $A\to B$ of domains we show that we have a well defined associated map $\RCent B\to \RCent A$. In the fourth section we show 
that central ideals (that are real ideals) behave much better than real ideals when we consider integral extensions of rings. This is the principal reason we prefer working with central ideals in this paper. Especially, we have the following lying-over property: Let $A\to B$ be an integral and birational extension of domains (birational means $\K(A)\simeq \K(B)$),
then $\RCent B\to \RCent A$ is surjective.

Regarding the classical case, we say that an extension $A\to B$ of domains is centrally subintegral if it is an integral extension such that the associated map $\RCent B\to\RCent A$ is bijective and equiresidual. Surprisingly, centrally subintegral extensions of geometric rings are strongly linked with the recent theory of rational continuous and regulous functions on real algebraic varieties introduced by Fichou, Huisman, Mangolte and the author \cite{FHMM} and by Koll\'ar and Nowak \cite{KN}. Let $X$ be an irreducible affine algebraic variety over $\R$ with coordinate ring $\R[X]$. The central locus $\Cent(X)$ is the subset of the set of real closed points $X(\R)$ such that the associated ideal is central. By the Central Nullstellensatz $\Cent X$ coincides with the Euclidean closure of the set of smooth real closed points. Following \cite{FMQ-futur2}, we denote by $\SR(\Cent X)$, called
the ring of rational continuous functions on $\Cent X$, the ring
of continuous functions on $\Cent X$ that are rational on $X$. We denote by $\SRR (\Cent X)$, called the ring
of regulous functions on $\Cent X$, the subring of $\SR(\Cent X)$
given by rational continuous functions that satisfies the additional
property that they are still rational by restriction to any subvariety intersecting $\Cent X$ in maximal dimension. The link between centrally subintegral extensions and regulous functions is given by the following result: Given a finite morphism $\pi:Y\to X$ between two irreducible affine algebraic varieties over $\R$ then $\pi^*:\R[X]\to\R[Y]$ is centrally subintegral iff the map $\pi_{\mid \Cent Y}\Cent Y\to\Cent X$ is biregulous iff $X$ and $Y$ have the same regulous functions i.e the map $\SRR(\Cent X)\to\SRR(\Cent Y)$, $f\mapsto f\circ\pi_{\mid \Cent Y}$ is an isomorphism. The rational continuous and regulous functions are now extensively studied in real geometry, we refer for example to \cite{KuKu1, KuKu2, FMQ, Mo} for further readings related to the subject.

Similarly to the standard case then we prove in the fifth section that given an extension $A\to B$ of domains there is a biggest extension of $A$ in a subring of $B$ which is centrally subintegral. The target of this biggest extension is denoted by $A^{s_c,*}_B$ and is called the central seminormalization of $A$ in $B$.
This result is a deep generalization of \cite[Prop. 2.23]{FMQ-futur2}. To get the existence of such seminormalization we have introduced and studied several concepts: the central gluing of an integral extension, the birational and birational-integral closure of a ring in another one.

In the sixth section we obtain the principal result of the paper. We have proved the existence of a central seminormalization of a ring in another one but if we take an explicit geometric example i.e a finite extension of coordinate rings of two irreducible affine algebraic varieties over $\R$, due to the fact that when we do the central gluing then we glue together infinitely many ideals, it is in general not easy to compute the central seminormalization. In the main result of the paper, we prove that, under reasonable hypotheses
on the extension $A\to B$, we can obtain the central seminormalization $A^{s_c,*}_B$ from $B$ by  a birational gluing  followed by a finite number of successive elementary central gluings almost like Traverso's decomposition
theorem for classical seminormal extensions \cite{T}. We use this construction to compute the central seminormalization in several examples. This decomposition result allows to prove in section 7 that the processes of central seminormalization and localization commute
together. The proof of the decomposition theorem make strong use of the results on central ideals developed in Section \ref{sectCalg}.

The last section of the paper is devoted to the existence, given a finite type morphism $\pi:Y\to X$ of irreducible affine algebraic varieties over $\R$ or integral schemes of finite type over $\R$, of a central seminormalization of $X$ in $Y$ denoted by $X^{s_c,*}_Y$. It can be seen as a real or central version of Andreotti and Bombieri's construction of the classical seminormalization of a scheme in another one \cite{AB}. We show that the ring $\R[X]^{s_c,*}_{\R[Y]}$ is a finitely generated algebra over $\R$ in the affine case and the $\SO_X$-algebra  $(\SO_X)^{s_c,*}_{\pi_*\SO_Y}$ is a coherent sheaf when we work with schemes. In the affine case, we prove that the coordinate ring of $X^{s_c,*}_Y$
is the integral closure of the coordinate ring of $X$ in a certain ring of regulous functions generalizing one of the main results in \cite{FMQ-futur2}.

\vskip 10mm

{\bf Acknowledgment} :  The author is deeply grateful to G. Fichou and R. Quarez for useful discussions.

\section{Real algebra}

Let $A$ be ring. We assume in the paper that all the rings are commutative and contain ${\mathbb Q}$. 

Recall that an ideal $I$ of $A$ is called
real if, for every sequence $a_1,\ldots,a_k$ of elements of $A$, then
$a_1^2+\cdots+a_k^2\in I$ implies $a_i\in I$ for $i=1,\ldots,k$. 
We denote by $\Sp A$ (resp. $\RSp A$)
the (resp. real) Zariski spectrum of $A$, i.e the set of all
(resp. real) prime ideals of $A$. The set of maximal (resp. and real)
ideals is denoted by $\Max A$ (resp. $\ReMax A$). We endow $\Sp A$
with the Zariski topology whose closed subsets are given by the sets
$\V(I)=\{\p\in\Sp A|\, I\subset\p\}$ where $I$ is an ideal of $A$. If $f\in A$ we denote simply $\V((f))$ by $\V(f)$. The
subsets $\RSp A$, $\Max A$ and $\ReMax A$ of $\Sp A$ are endowed with
the induced Zariski topology. The radical of $I$, denoted by $\Rad I$, is defined as follows:
  $$\Rad I=\{a\in A|\,\, \exists m\in\N \,\,a^m\in I\},$$ it is also the intersection of
  the prime ideals of $A$ that contain $I$. If $B$ is a ring, we denote in the sequel by $\sum B^2$ the set of (finite) sums of squares of elements of $B$. The
real radical of $I$, denoted by $\RadRe I$, is defined as follows:
  $$\RadRe I=\{a\in A|\,\, \exists m\in\N \,\, \exists b\in \sum A^2\,\, {\rm such\,\,that}\,\,a^{2m}+b\in I
  \}.$$
We have:
\begin{prop} \cite[Prop. 4.1.7]{BCR}
\label{propradreal}
\begin{enumerate}
\item[1)] $\RadRe I$ is the smallest real ideal of $A$ containing
  $I$.
\item[2)] $$\RadRe I=\bigcap_{\p\in\ReSp A,\,I\subset\p}\p.$$
\end{enumerate}
\end{prop}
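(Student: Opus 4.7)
The plan is to prove (2) by a Zorn-type construction and then deduce (1) from it as a formal consequence. The easy inclusion $\RadRe I \subset \bigcap_{\p \in \ReSp A,\,I \subset \p}\p$ is immediate: if $a^{2m}+\sum_i b_i^2\in I\subset\p$ with $\p$ real, then passing to the real domain $A/\p$ gives $(\bar a^m)^2+\sum_i\bar b_i^2=0$, so $\bar a^m=\bar b_i=0$ for each $i$; as $A/\p$ is a domain, $\bar a=0$, hence $a\in\p$.

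For the reverse inclusion I will produce, given $a\notin\RadRe I$, a real prime $\p\supset I$ with $a\notin\p$. The key device is the subset
$$M=\bigl\{a^{2k}+s\mid k\in\N,\ s\in\textstyle\sum A^2\bigr\}$$
of $A$. A direct expansion, using that sums and products of sums of squares are again sums of squares, shows $M$ is multiplicatively closed; it contains $1=a^0+0$; and $M\cap I=\emptyset$ is exactly the hypothesis $a\notin\RadRe I$. A standard Zorn's lemma argument then yields a prime ideal $\p\supset I$ maximal among ideals disjoint from $M$, and $a\notin\p$ because $a^2=a^2+0\in M$.

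The crux is to verify that this $\p$ is real. Suppose for contradiction that $\sum_{i=1}^n c_i^2\in\p$ with $c_j\notin\p$ for some $j$. Maximality forces $\p+(c_j)$ to meet $M$, producing an identity $rc_j\equiv a^{2k}+s\pmod\p$ for suitable $r\in A$, $k\in\N$, $s\in\sum A^2$. Squaring and using $c_j^2\equiv -\sum_{i\neq j}c_i^2\pmod\p$ gives
$$(a^{2k}+s)^2+\sum_{i\neq j}(rc_i)^2\in\p.$$
Expanding $(a^{2k}+s)^2=a^{4k}+(2a^{2k}s+s^2)$ and observing that the parenthesised term, together with $\sum_{i\neq j}(rc_i)^2$, is a sum of squares, rewrites the left-hand side as $a^{4k}+s'$ with $s'\in\sum A^2$, i.e.\ as an element of $M$. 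This contradicts $\p\cap M=\emptyset$, so $\p$ must be real.

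Part (1) then follows formally. As an intersection of real primes, $\RadRe I$ is a real ideal containing $I$. For minimality, let $J\supset I$ be any real ideal; a dyadic descent on exponents shows $J=\RadRe J$ (reality applied to $(a^m)^2+b\in J$ first gives $a^m\in J$, and reality applied successively to $(a^{2^{i-1}})^2\in J$ descends to $a\in J$), and (2) applied to $J$ yields $J=\bigcap_{\p\supset J,\,\text{real}}\p$; since every real prime containing $J$ also contains $I$, this gives $\RadRe I\subset J$. The main obstacle is the realness of $\p$ in the Zorn step: the naive multiplicative set $\{a^{2k}\}$ is too small, and it is the enlargement by $\sum A^2$ that makes the squaring-plus-substitution manipulation land back inside $M$.
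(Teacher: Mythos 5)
Your proof is correct, and it takes a route that is genuinely different from the BCR template the paper is citing, which is also the template the paper follows for its own central analogue (Proposition~\ref{propradcent}). There, one first shows directly that $\RadRe I$ is an ideal --- closure under addition being the fiddly step --- then that it is a real ideal, which settles part (1); part (2) is then obtained by applying Zorn to the poset of \emph{real} ideals containing $I$ but avoiding $a$, and the work is in showing by contradiction that a maximal such ideal is prime. You reverse the logic: you prove (2) first by applying Zorn over \emph{all} ideals containing $I$ and disjoint from the multiplicative set $M=\{a^{2k}+s\mid k\in\N,\ s\in\sum A^2\}$, so primeness of the resulting $\p$ comes for free from the standard lemma, and the burden is shifted to verifying that $\p$ is real. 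Your squaring-and-substitution argument handles that correctly, and it is precisely the enlargement of $\{a^{2k}\}$ by $\sum A^2$ that makes the cross terms $2a^{2k}s+s^2+\sum_{i\neq j}(rc_i)^2$ land back in $\sum A^2$, giving an element of $M$ in $\p$ and the desired contradiction. The structural payoff is that (1) then follows formally: $\RadRe I$ is an ideal because it equals an intersection of prime ideals, so you never need to check closure under addition by hand. Both routes are sound; yours trades the verification that the defining formula yields an ideal for the verification that the Zorn prime is real.
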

It follows that $I$ is a real ideal if and only if $I=\RadRe I$ and
that a real ideal is radical.

An order $\alpha$ in $A$ is given by a real prime ideal $\p$ of $A$ (called
the support of $\alpha$ and denoted by $\supp(\alpha)$) and an ordering on the residue 
field $k(\p)$ at $\p$. An order can equivalently be given by a morphism $\phi$
from $A$ to a real closed field (the kernel is then the support). The set of orders of $A$ is called
the real spectrum of $A$ and we denote it by $\Sp_r A$. One endows
$\Sp_r A$ with a natural topology whose open subsets are generated by
the sets $\{\alpha\in\Sp_r A|\,\alpha(a)>0\}$. Let $\phi:A\rightarrow
B$ be a ring morphism. It canonically induces continuous maps
$\Sp B\rightarrow \Sp A$, $\RSp B\to\RSp A$ and 
$\Sp_r B\rightarrow \Sp_r A$.

Assume $X=\Sp \R[X]$ is an affine algebraic
variety over $\R$ with coordinate ring $\R[X]$ (see \cite{Man} for a description of the different notions of real algebraic varieties), we
denote by $X(\R)$ the set of real closed points of $X$. We
recall classical notations. If
$f\in\R[X]$ then $\Z(f)=\V(f)\cap X(\R)=\{x\in X(\R)|\,\,f(x)=0\}$ is the real zero
set of $f$.
If $A$
is a subset of $\R[X]$ then $\Z(A)=\bigcap_{f\in A}\Z(f)$ is the real zero
set of $A$. If $W\subset X(\R)$ then
$\I(W)=\{f\in\R[X]|\,\,W\subset\Z(f)\}$ is an ideal called the ideal
of functions vanishing on $W$. We recall the real Nullstellensatz
\cite[Thm. 4.1.4]{BCR}:
\begin{thm} (Real Nullstellensatz)\\
  \label{RealNull}
  Let $X$ be an affine algebraic variety over
  $\R$. Then:
  $$I\subset\R[X]\,\,{\rm is\,\, a\,\, real\,\, ideal}\,\, \Leftrightarrow\,\, I=\I(\Z(I)).$$
\end{thm}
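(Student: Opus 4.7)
The plan is to prove the two implications separately. The reverse implication is straightforward: for any subset $W\subset X(\R)$ the ideal $\I(W)$ is real, since if $f_1^2+\cdots+f_k^2\in\I(W)$ then evaluating at $x\in W$ gives $f_1(x)^2+\cdots+f_k(x)^2=0$ in $\R$, which forces $f_i(x)=0$ for every $i$ and hence $f_i\in\I(W)$. Consequently, if $I=\I(\Z(I))$ then $I$ is a real ideal. Note that this direction uses nothing about $X$ beyond the fact that $\R$ is formally real.

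For the forward implication, assume $I$ is real. The inclusion $I\subset\I(\Z(I))$ is tautological, so the work is in the reverse inclusion $\I(\Z(I))\subset I$. I would proceed by contraposition: given $f\in\R[X]\setminus I$, I would produce a real point $x\in\Z(I)$ with $f(x)\neq 0$. By Proposition \ref{propradreal} applied to the real ideal $I$, one has
$$I=\RadRe I=\bigcap_{\p\in\ReSp\R[X],\,I\subset\p}\p,$$
so there exists a real prime $\p$ containing $I$ with $f\notin\p$. Since $\p$ is real, the domain $\R[X]/\p$ has formally real fraction field; picking an ordering and a real closure $R$ yields a ring homomorphism $\phi:\R[X]\to R$ with $\phi(I)=0$ and $\phi(f)\neq 0$. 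This amounts to an $R$-valued point of $X$ lying in $\Z(I)$ off $\V(f)$.

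The decisive step is to descend this $R$-point to an $\R$-point. Here I would invoke the Artin--Lang homomorphism theorem, a standard consequence of Tarski's transfer principle: for any finitely generated $\R$-algebra $A$, any homomorphism $A\to R$ to a real closed extension of $\R$ can be replaced by a homomorphism $A\to\R$ matching any prescribed finite set of sign and equality conditions. Since $\R[X]$ is finitely generated over $\R$ and $I$ is finitely generated (as $\R[X]$ is Noetherian), applying Artin--Lang to a generating set of $I$ together with the condition $f\neq 0$ produces the desired homomorphism $\psi:\R[X]\to\R$, which corresponds to a point $x\in X(\R)\cap\Z(I)$ satisfying $f(x)\neq 0$. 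This contradicts the assumption $f\in\I(\Z(I))$.

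The main obstacle is exactly the descent carried out in the last step: the purely algebraic construction using Proposition \ref{propradreal} only provides a point over \emph{some} real closed extension of $\R$, and there is no a priori reason such a point comes from $X(\R)$. Bridging this gap is what makes Artin--Lang essential here, and it is also the reason one cannot relax the hypothesis that $X$ be an affine algebraic variety over $\R$ (equivalently, that $\R[X]$ be a finitely generated $\R$-algebra).
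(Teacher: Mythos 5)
Your proof is correct and is precisely the standard Artin--Lang argument given in the reference the paper cites (\cite[Thm.~4.1.4]{BCR}): reduce to a real prime via Proposition~\ref{propradreal}, pass to the real closure of an ordered residue field to obtain an $R$-point, then descend to an $\R$-point preserving the finitely many sign conditions $a_i=0$ (for generators $a_i$ of $I$) and $f\neq 0$. The paper does not supply its own proof, so there is nothing further to compare; the only cosmetic slip is the phrase ``contradicts'' at the end of an argument you set up as a contraposition, which does not affect correctness.
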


\begin{cor}
 \label{corRealNull}
  Let $X$ be an affine algebraic variety over
  $\R$. The map $\ReMax \R[X]\to X(\R)$, $\m\mapsto \Z(\m)$ is bijective and for any ideal $I\subset\R[X]$ we have 
  $$\Z(I)=\V(I)\cap\ReMax \R[X].$$
\end{cor}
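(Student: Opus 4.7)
The plan is to reduce both statements to the Real Nullstellensatz (Theorem~\ref{RealNull}) together with the observation that a real maximal ideal of $\R[X]$ must have residue field $\R$.

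First I would prove the bijection. For any $x\in X(\R)$, the ideal $\m_x=\I(\{x\})$ is the kernel of the evaluation morphism $\R[X]\to\R$, $f\mapsto f(x)$, hence is maximal with residue field $\R$. Since $\R$ is a real field (that is, $-1$ is not a sum of squares), $\m_x$ is real, so $x\mapsto\m_x$ is a well-defined map $X(\R)\to\ReMax\R[X]$. Conversely, given $\m\in\ReMax\R[X]$, I claim $\Z(\m)\neq\emptyset$: if it were empty, then $\I(\Z(\m))=\I(\emptyset)=\R[X]$, but by Theorem~\ref{RealNull} $\I(\Z(\m))=\m$ since $\m$ is real, contradicting $\m\subsetneq\R[X]$. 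Pick any $x\in\Z(\m)$; then $\m\subset\m_x$, and maximality of $\m$ forces $\m=\m_x$. This shows the map $\m\mapsto\Z(\m)$ is an inverse: it is well-defined (sends a real maximal ideal to a single real point), and composing in either direction gives the identity by the Real Nullstellensatz.

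For the second statement, I would use the bijection from the first part. A real maximal ideal $\m$ lies in $\V(I)$ if and only if $I\subset\m$, which, writing $\m=\m_x=\I(\{x\})$ for a unique $x\in X(\R)$, is equivalent to $f(x)=0$ for all $f\in I$, i.e.\ $x\in\Z(I)$. Transporting via the bijection gives $\V(I)\cap\ReMax\R[X]=\{\m_x:x\in\Z(I)\}$, which under the identification of $\ReMax\R[X]$ with $X(\R)$ is precisely $\Z(I)$.

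The only nontrivial input is the Real Nullstellensatz itself, which is cited; the rest is bookkeeping. The main subtlety worth being careful about is the argument that a real maximal ideal has residue field $\R$ (so that the associated point is genuinely a real closed point of $X$, not a complex one), but this is immediate since $\R[X]/\m$ is a finite extension of $\R$ by Zariski's lemma and the only real such extension is $\R$.
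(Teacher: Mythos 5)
The paper offers no explicit proof of this corollary, presenting it as an immediate consequence of the Real Nullstellensatz, and your argument is precisely the bookkeeping that makes that deduction rigorous: you use the Nullstellensatz to show $\Z(\m)\neq\emptyset$ for a real maximal ideal $\m$, identify $\m$ with the kernel $\m_x$ of evaluation at the resulting real closed point, and transport $\V(I)\cap\ReMax\R[X]$ to $\Z(I)$ via this bijection. Your proof is correct and matches the intended approach; the one place you could be slightly more explicit is that $\Z(\m)$ is a singleton (which follows since $\V(\m)=\{\m\}$ for $\m$ maximal, or equivalently since distinct points of $X(\R)$ are separated by regular functions), but this is standard and does not constitute a gap.
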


In the sequel we will identify $\ReMax \R[X]$ and $X(\R)$ for an affine algebraic variety $X$ over $\R$. We can endow $X(\R)$ with the induced Zariski topology, the closed subsets are of the form $\Z(I)$ for $I$ an ideal of $A$.

\section{Central algebra}
\label{sectCalg}

The goal of this section is to develop the theory of central ideals
similarly to the theory of real ideals done in \cite{BCR} or in the
previous section. We also prove that the notion of central ideal developed here coincides for prime ideals with that of \cite{FMQ-futur2}.
This section will serve as a basis for developing the theory of central seminormalization and especially to prove a central version of Traverso's decomposition theorem.

In this section $A$ is a domain containing ${\mathbb Q}$. We denote by $\K(A)$ its fraction field. 
\begin{prop}
\label{defrealdomain}
The following properties are equivalent:
\begin{enumerate}
\item[1)] $-1\not\in\sum\K(A)^2$.
\item[2)] $\Sp_r\K(A)\not=\emptyset$.
\item[3)] $(0)$ is a real ideal of $\K(A)$.
\item[4)] $(0)$ is a real ideal of $A$.
\end{enumerate}
\end{prop}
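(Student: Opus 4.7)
The plan is to prove the cycle $1) \Rightarrow 2) \Rightarrow 3) \Rightarrow 4) \Rightarrow 1)$, which gives all four equivalences. Three of the four implications are short, and the real content is concentrated in one classical theorem and one ``clearing denominators'' argument.

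For $1) \Rightarrow 2)$, I would invoke the Artin--Schreier theorem: a field is formally real, i.e.\ satisfies $-1\notin\sum K^2$, if and only if it admits an ordering. This is standard (e.g.\ \cite[Thm. 1.1.8]{BCR}), and an ordering on $\K(A)$ is precisely a point of $\Sp_r \K(A)$ with support $(0)$. For $2)\Rightarrow 3)$, if $\alpha\in\Sp_r\K(A)$ then by definition $\supp(\alpha)$ is a real prime ideal of $\K(A)$; since $\K(A)$ is a field, the only prime ideal is $(0)$, and therefore $(0)$ is real. For $3)\Rightarrow 4)$, the implication is immediate: if $a_1,\dots,a_k\in A\subset\K(A)$ satisfy $a_1^2+\cdots+a_k^2=0$, realness of $(0)$ in the larger ring $\K(A)$ already forces each $a_i=0$.

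The only implication requiring a small computation is $4)\Rightarrow 1)$. Suppose towards a contradiction that $-1=\sum_{i=1}^k (p_i/q_i)^2$ with $p_i,q_i\in A$ and $q_i\neq 0$. Setting $q=q_1\cdots q_k\in A\setminus\{0\}$ and $r_i=p_i\,q/q_i\in A$, we multiply through by $q^2$ to get the identity
\[
q^2+r_1^2+\cdots+r_k^2=0
\]
in $A$. Since $(0)\subset A$ is real by hypothesis, this forces $q=0$, contradicting that $A$ is a domain and that each $q_i$ is nonzero. Hence $-1\notin\sum\K(A)^2$, closing the cycle.

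There is no real obstacle here; the most delicate point is merely bookkeeping, namely checking that the ``clear denominators'' argument stays inside $A$, which uses only that $A$ is a domain (so products of nonzero elements are nonzero) and that $\mathbb{Q}\subset A$ plays no role beyond allowing one to identify $\K(A)$ as the usual field of fractions. I would write the four implications in the order above and cite Artin--Schreier / \cite{BCR} for step $1)\Rightarrow 2)$, presenting the remaining three as quick direct verifications.
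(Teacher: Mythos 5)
Your proof is correct and follows essentially the same route as the paper: cite the standard real-field equivalences from \cite{BCR} for the first part, observe that $3)\Rightarrow 4)$ by contraction of real ideals, and prove $4)\Rightarrow 1)$ by clearing denominators and using that $A$ is a domain. The only cosmetic difference is that your contradiction in the last step uses $q=\prod q_i\neq 0$ directly, whereas the paper concludes that all numerators vanish; both are valid and the arguments are in substance identical.
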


\begin{proof}
See the first chapter of \cite{BCR} to get the equivalence between the first three properties. Since the contraction of a real ideal is a real ideal then 3) implies 4).
Assume $(0)$ is a real ideal of $A$ and $-1\in\sum\K(A)^2$. We have $-1=\sum_{i=1}^n(\frac{a_i}{b_i})^2$ with the $a_i$ and $b_i$ in $A\setminus\{0\}$ and consequently 
$$(\prod_{i=1}^n b_i)^2+\sum_{i=1}^n (a_i(\prod_{j=1,j\not= i}^n b_j))^2=0$$ and since $(0)$ is a real ideal of $A$ then it follows that $a_i=0$ for $i=1,\ldots,n$, impossible.
\end{proof}

In the sequel, we
say that $A$ is a real domain if the equivalent properties of Proposition \ref{defrealdomain} are satisfied. In case $A$ is the coordinate ring $\R[X]$ of an irreducible affine algebraic variety $X$ over $\R$ then 
we simply denote $\K(\R[X])$ by $\K(X)$ and it corresponds to the field of classes of rational functions on $X$ and we call 
it the field of rational functions on $X$ or the function field of $X$.

We modify a bit the definition of a real ideal.
\begin{defn}
  \label{defidealcentral}
Let  $I$ be an ideal of $A$. We say that $I$
is central if for every $a\in A$, for every $b\in \sum\K(A)^2$
  we have: $$a^{2}+b\in I\,\,\Rightarrow\,\,a\in I$$
\end{defn}

\begin{rem}
Clearly,  $I$ is central $\Rightarrow$ $I$ is real $\Rightarrow$ $I$ is
  radical.
\end{rem}

\begin{rem}
An ideal $I\subset A$ is central if and only if $I$ is $(\sum\K(A)^2\cap A)$-radical in the sense of \cite{BCR}. 
\end{rem}

\begin{defn}
  \label{defcentralprime}
  \begin{enumerate}
\item[1)]
  We denote by $\RCent A$ the set of central prime ideals of $A$.
\item[2)] We denote by $\CentMax A$ the set of central and maximal ideals of $A$.
\item[3)] We say that $A$ is a central ring if any real ideal is central.
\item[4)] Assume $X$ is an irreducible affine algebraic
variety over $\R$ and let $\R[X]$ be the coordinate ring of $X$. We
denote by $\Cent X$ the image of $\CentMax \R[X]$ by the bijection $\ReMax \R[X]\to X(\R)$. We call $\Cent X$ the 
set of central real closed
points of $X$.
We say that $X$ is central if $X(\R)=\Cent X$.
\end{enumerate}
\end{defn}

Let $X$ be an affine algebraic variety over $\R$. We
recall classical notations. We denote by $X_{reg}(\R)$ the set of
smooth points of $X(\R)$. Let $W\subset X(\R)$, we denote by
$\overline{W}^Z$ (resp. $\overline{W}^{E}$) the closure of $W$ for the
Zariski (resp. Euclidean) topology.

Our definition of central ideals is chosen in order to satisfy
the Central Nullstellensatz stated in \cite[Cor. 7.6.6]{BCR}.
\begin{thm} (Central Nullstellensatz)\\
  \label{centralNull}
  Let $X$ be an irreducible affine algebraic variety over
  $\R$. Then:
  $$I\subset\R[X]\,\,{\rm is\,\, a\,\, central\,\, ideal}\,\, \Leftrightarrow\,\, I=\I(\Z(I)\cap
  \overline{X_{reg}(\R)}^E)\,\, \Leftrightarrow\,\, I=\I(\V(I)\cap
  \overline{X_{reg}(\R)}^E)$$
\end{thm}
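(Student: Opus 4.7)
The plan is to first show the two right-hand conditions are identical (immediate) and then to handle the equivalence with ``$I$ is central'' in two directions, the harder one reduced to the classical Central Nullstellensatz of BCR to which the definition of central ideal has been aligned.

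Under the bijection $\ReMax\R[X]\simeq X(\R)$ of Corollary \ref{corRealNull}, $\V(I)\cap \ReMax\R[X]=\Z(I)$, and since $\overline{X_{reg}(\R)}^E\subseteq X(\R)$ we get $\V(I)\cap\overline{X_{reg}(\R)}^E=\Z(I)\cap\overline{X_{reg}(\R)}^E$. So the two right-hand sides define a common ideal, which I denote $J$; the inclusion $I\subseteq J$ is automatic since $I\subseteq\I(\Z(I))$.

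To prove ``$I=J$ implies $I$ is central'' I would argue by continuity. Let $a\in\R[X]$ and $b=\sum_{i=1}^n(p_i/q_i)^2\in\sum\K(X)^2$ with $a^2+b\in I$, and set $q=\prod_i q_i$. The Zariski-open subset $U=\{x\in X(\R):q(x)\neq 0\}$ is dense in $X(\R)$, hence $U\cap X_{reg}(\R)$ is Euclidean-dense in $\overline{X_{reg}(\R)}^E$. For each $x\in\Z(I)\cap \overline{X_{reg}(\R)}^E$, choose a sequence $x_n\in U\cap X_{reg}(\R)$ with $x_n\to x$: then $b(x_n)\ge 0$, and $(a^2+b)(x_n)\to (a^2+b)(x)=0$ because $a^2+b\in I$ and $x\in\Z(I)$, so $a(x_n)^2\le (a^2+b)(x_n)\to 0$, whence $a(x)=0$ by continuity. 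Thus $a$ vanishes on $\Z(I)\cap \overline{X_{reg}(\R)}^E$, i.e.\ $a\in J=I$, and $I$ is central.

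The reverse implication, ``$I$ central implies $I=J$'', reduces to $J\subseteq I$ and is exactly \cite[Cor. 7.6.6]{BCR}, whose statement matches Definition \ref{defidealcentral} by design (as pointed out just before the theorem). This is also the main obstacle of the proof: the substantive geometric content is the statement that, for every central prime $\p\supseteq I$, the set $\Z(\p)\cap\overline{X_{reg}(\R)}^E$ is Zariski-dense in $\Z(\p)$, which is precisely what BCR supplies. Once granted, decomposing the central ideal $I$ as the intersection of the central primes containing it (a fact to be developed in this section) and combining with the Real Nullstellensatz (Theorem \ref{RealNull}) yields $J\subseteq I$, completing the equivalence.
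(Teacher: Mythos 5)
Your proof is correct but takes a genuinely different route from the paper's for the first implication, and a more roundabout route for the second.

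For the direction ``$I=\I(\Z(I)\cap\overline{X_{reg}(\R)}^E)$ implies $I$ central,'' the paper simply invokes \cite[Cor.~7.6.6]{BCR} once more: $a^2+b\in I$ puts $a$ in $\I(\Z(I)\cap\overline{X_{reg}(\R)}^E)=I$. Your continuity argument replaces this with an elementary, self-contained proof, which is a nice gain (it makes transparent why sums of squares of rational functions force vanishing on the central locus). One small step you glide over: the assertion that $U\cap X_{reg}(\R)$ is Euclidean-dense in $\overline{X_{reg}(\R)}^E$ is not quite ``hence'' from $U$ being Zariski-dense in $X(\R)$; what one really needs is that $\Z(q)\cap X_{reg}(\R)$ is nowhere dense in the smooth manifold $X_{reg}(\R)$, which holds because a nonempty Euclidean-open subset of $X_{reg}(\R)$ is Zariski-dense in the irreducible $X$, so $q\in\I$ of it would force $q=0$. (And if $X_{reg}(\R)=\emptyset$ the statement is vacuous since then $J=\R[X]$.) With that observation added, the argument is complete and rigorous.

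For the reverse direction your write-up is a little muddled: you first say it ``is exactly'' \cite[Cor.~7.6.6]{BCR}, which is essentially the paper's route ($a\in J$ gives $a^{2m}+b\in I$ by Cor.~7.6.6, then centrality and radicality of $I$ give $a\in I$), but then you sketch a second, longer route through the decomposition $I=\bigcap\p$ over central primes $\p\supseteq I$ plus the Real Nullstellensatz. That longer route does work, but note it leans on Proposition~\ref{propradcent} and Corollary~\ref{corradcent}, which appear \emph{after} Theorem~\ref{centralNull} in the paper. This is not actually circular, since their proofs are purely algebraic and make no appeal to the Nullstellensatz, but it is a forward reference and is considerably less direct than the paper's two-line argument. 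If you keep this route, you should state explicitly that you are using the central-radical decomposition, whose proof is independent; otherwise just stick with the direct application of Cor.~7.6.6.

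The observation that the two right-hand conditions coincide (via $\Z(I)=\V(I)\cap\ReMax\R[X]$ and $\overline{X_{reg}(\R)}^E\subseteq X(\R)$) matches what the paper states without comment, and your spelled-out justification is fine.
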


\begin{proof}
  We assume $ I=\I(\Z(I)\cap
  \overline{X_{reg}(\R)}^E)$ and $a^{2}+b\in I$ for $a\in \R[X]$ and
  $b\in \sum\K(X)^2$. Since
  $a^{2}+b\in I$ then $a\in \I(\Z(I)\cap
  \overline{X_{reg}(\R)}^E)$ by \cite[Cor. 7.6.6]{BCR}. By hypothesis we get $a\in
  I$ and thus $I$ is central. 

  Assume $I$ is a central ideal of $\R[X]$. Let $a\in \I(\Z(I)\cap
  \overline{X_{reg}(\R)}^E)$. By \cite[Cor. 7.6.6]{BCR}, there exist $m\in\N$,
  $b\in \sum\K(X)^2$ such that $a^{2m}+b\in
  I$. Since $I$ central then $a^m\in I$. Since $I$ is radical then it
  follows that $a\in I$.
  
 To end the proof remark that $\Z(I)\cap  \overline{X_{reg}(\R)}^E=\V(I)\cap  \overline{X_{reg}(\R)}^E$.
\end{proof}

From the previous theorem, it follows:
\begin{cor} 
  \label{centgeo}
  Let $X$ be an irreducible affine algebraic variety over
  $\R$. Then:
  $$\Cent X=\overline{X_{reg}(\R)}^E.$$
\end{cor}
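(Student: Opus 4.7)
The plan is to derive Corollary \ref{centgeo} as a direct pointwise application of the Central Nullstellensatz (Theorem \ref{centralNull}) to maximal real ideals, using the identification $\ReMax \R[X]\simeq X(\R)$ provided by Corollary \ref{corRealNull}.

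First I would fix $x\in X(\R)$ and let $\m_x\in\ReMax\R[X]$ be the associated maximal ideal, so that $\m_x=\I(\{x\})$ and $\Z(\m_x)=\{x\}$. By the very definition of $\Cent X$, membership $x\in\Cent X$ is equivalent to $\m_x$ being a central ideal, so the task reduces to showing
\[
\m_x \text{ is central} \iff x\in\overline{X_{reg}(\R)}^E.
\]

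For the implication $(\Leftarrow)$, I would assume $x\in\overline{X_{reg}(\R)}^E$. Then $\Z(\m_x)\cap\overline{X_{reg}(\R)}^E=\{x\}\cap\overline{X_{reg}(\R)}^E=\{x\}$, so $\I(\Z(\m_x)\cap\overline{X_{reg}(\R)}^E)=\I(\{x\})=\m_x$; by Theorem \ref{centralNull} this exactly says that $\m_x$ is central, so $x\in\Cent X$.

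For the converse $(\Rightarrow)$, I argue by contrapositive: if $x\notin\overline{X_{reg}(\R)}^E$, then $\Z(\m_x)\cap\overline{X_{reg}(\R)}^E=\emptyset$, hence $\I(\Z(\m_x)\cap\overline{X_{reg}(\R)}^E)=\I(\emptyset)=\R[X]$, which differs from $\m_x$. Theorem \ref{centralNull} then forces $\m_x$ not to be central, so $x\notin\Cent X$. Combining the two directions yields $\Cent X=\overline{X_{reg}(\R)}^E$.

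There is no real obstacle here: the whole content has already been packaged into Theorem \ref{centralNull}, and the corollary is simply the translation of that characterization from arbitrary central ideals down to the special case of the maximal ideals $\m_x$, via the bijection of Corollary \ref{corRealNull}.
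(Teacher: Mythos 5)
Your proof is correct and fills in the details of exactly the argument the paper intends: the paper gives no explicit proof, simply remarking that the corollary "follows from the previous theorem," i.e.\ from the Central Nullstellensatz specialized to the maximal ideals $\m_x$ via the bijection $\ReMax\R[X]\simeq X(\R)$, which is precisely what you do.
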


\begin{cor}
  \label{equivcentral}
  Let $X$ be an irreducible affine algebraic variety over
  $\R$. The ring $\R[X]$ is central if and only if $X$ is central.
\end{cor}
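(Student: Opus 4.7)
The plan is to deduce both implications directly from the Real Nullstellensatz (Theorem \ref{RealNull}), the Central Nullstellensatz (Theorem \ref{centralNull}), and the geometric identification $\Cent X = \overline{X_{reg}(\R)}^E$ provided by Corollary \ref{centgeo}. No essentially new idea seems required; the whole argument reduces to a translation between the ring-theoretic and the geometric formulations.

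For the ``only if'' direction I would assume $\R[X]$ is central and show $X(\R) \subseteq \Cent X$, the reverse inclusion being immediate from the definition of $\Cent X$ as the image of $\CentMax \R[X]$ under the bijection $\ReMax \R[X] \to X(\R)$ from Corollary \ref{corRealNull}. Given $x \in X(\R)$, the corresponding maximal ideal $\m_x \subseteq \R[X]$ has residue field $\R$, so any identity $\sum a_i^2 \in \m_x$ forces each $a_i(x)=0$ and hence each $a_i \in \m_x$; thus $\m_x$ is a real maximal ideal. By the hypothesis that every real ideal is central, we get $\m_x \in \CentMax \R[X]$, and therefore $x \in \Cent X$.

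For the ``if'' direction I would assume $X(\R) = \Cent X = \overline{X_{reg}(\R)}^E$ and take an arbitrary real ideal $I \subseteq \R[X]$. The Real Nullstellensatz gives $I = \I(\Z(I))$. Since $\Z(I) \subseteq X(\R) = \overline{X_{reg}(\R)}^E$, intersecting $\Z(I)$ with $\overline{X_{reg}(\R)}^E$ changes nothing, so
\[
I = \I(\Z(I)) = \I\bigl(\Z(I) \cap \overline{X_{reg}(\R)}^E\bigr).
\]
The Central Nullstellensatz then asserts that $I$ is central, so every real ideal of $\R[X]$ is central and $\R[X]$ is a central ring.

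The only step that might even deserve a second thought is the verification that $\m_x$ is a real ideal for $x\in X(\R)$, but this is a one-line residue-field argument. I do not foresee any substantive obstacle; the corollary is essentially a dictionary statement once the two Nullstellensätze and Corollary \ref{centgeo} are in hand.
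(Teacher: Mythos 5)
Your proof is correct and follows essentially the same approach as the paper, which simply states that the equivalence follows from Theorems \ref{RealNull} and \ref{centralNull}; your argument is a careful unpacking of that terse observation, with the extra (correct) check that $\m_x$ is a real maximal ideal for each $x\in X(\R)$ filling in the details the paper leaves implicit.
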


\begin{proof}
It follows from Theorems
\ref{RealNull} and \ref{centralNull}, that any real ideal of $\R[X]$
is central if and only if $\Cent
X=\overline{X_{reg}(\R)}^E=X(\R)$.
\end{proof}

We prove that we recover the definition of central prime ideal given
in \cite{FMQ-futur2}.
\begin{prop}
  \label{equivcentralprime}
Let $X$ be an irreducible affine algebraic variety over $\R$. Let
$\p\in\Sp \R[X]$. The
following properties are equivalent:
\begin{enumerate}
\item[1)] $\p\in\RCent \R[X]$.
\item[2)] $\overline{\Z(\p)\cap \Cent X}^{Z}=\Z(\p)$.
\item[3)] $\overline{\V(\p)\cap \Cent X}^{Z}=\V(\p)$.
\item[4)] $\p=\I(\Z(\p)\cap \Cent X)=\I(\V(\p)\cap \Cent X)$.
\item[5)] $\p=\{f\in\R[X]|\,\exists m\in\N,\,\exists g\in
  \sum\K(X)^2\,\,{\rm such\,\,that}\,\,f^{2m}+g\in\p\}$.
\item[6)] There exists an $\alpha\in\Sp_r(\K(X)\cap\R[X])$ which specializes in a
  order $\beta$ with support $\p$ i.e $\beta$ is in the closure of the
  singleton $\{\alpha\}$ for
  the topology of $\Sp_r \R[X]$.
\end{enumerate}
\end{prop}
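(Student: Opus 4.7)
The plan is to prove $1) \Leftrightarrow 4) \Leftrightarrow 2) \Leftrightarrow 3)$ via the Nullstellens\"atze, then $1) \Leftrightarrow 5)$ by unpacking the definition, and finally $1) \Leftrightarrow 6)$ by real-spectrum techniques.

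For $1) \Leftrightarrow 4)$, I would apply the Central Nullstellensatz (Theorem~\ref{centralNull}) directly to $\p$, using Corollary~\ref{centgeo} to replace $\overline{X_{reg}(\R)}^E$ by $\Cent X$. The two equalities in 4) coincide because $\Cent X \subseteq X(\R) \simeq \ReMax \R[X]$ by Corollary~\ref{corRealNull}, forcing $\V(\p) \cap \Cent X = \Z(\p) \cap \Cent X$. For $4) \Leftrightarrow 2)$, since central ideals are real, the Real Nullstellensatz (Theorem~\ref{RealNull}) gives $\p = \I(\Z(\p))$; then applying $\Z$ to both sides of the equation in 4) and using that $\Z \circ \I$ computes the Zariski closure in $X(\R)$ translates the statement into 2). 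The equivalence $4) \Leftrightarrow 3)$ is parallel, with $\V$ replacing $\Z$ and the classical Hilbert Nullstellensatz playing the role of the Real one (recall $\p$ is radical).

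For $1) \Leftrightarrow 5)$, let $N_c(\p) = \{f \in \R[X] \mid \exists m \in \N,\ \exists g \in \sum\K(X)^2,\ f^{2m}+g \in \p\}$. If $\p$ is central and $f \in N_c(\p)$, then $(f^m)^2 + g \in \p$ with $g \in \sum \K(X)^2$ forces $f^m \in \p$ by the central property, and primality yields $f \in \p$; the reverse inclusion is obtained by taking $m=1$, $g=0$. Conversely, the equation $\p = N_c(\p)$ directly encodes centrality: from $a^2+b \in \p$ with $b \in \sum\K(X)^2$ we get $a \in N_c(\p) = \p$.

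The main obstacle is $1) \Leftrightarrow 6)$. Using the already-established 3) and Corollary~\ref{centgeo}, centrality of $\p$ is equivalent to $\Z(\p) \cap \overline{X_{reg}(\R)}^E$ being Zariski-dense in $\V(\p)$. For $1) \Rightarrow 6)$, I would pick a smooth real point $x$ of $X$ in $\Z(\p)$ that is generic enough to have $\p$ sitting inside $\m_x$ as a ``non-degenerate'' subvariety at $x$, and then use local coordinates at $x$ in the regular local ring $\SO_{X,x}$ to build an ordering $\alpha$ of $\K(X)$ (i.e., an order of $\R[X]$ with support $(0)$) whose specialization in $\Sp_r \R[X]$ is an order $\beta$ with support exactly $\p$, obtained by transferring the evaluation-at-$x$ order along the inclusion $\R[X]/\p \hookrightarrow \SO_{X,x}/\p\SO_{X,x}$. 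For the converse, a specialization $\alpha \leadsto \beta$ with supports $(0)$ and $\p$ provides an ordering of $\K(X)$ compatible with a valuation centered at $\p$, and an Artin--Lang-type density argument then produces smooth real points of $\Z(\p)$ Zariski-dense in $\V(\p)$, hence 3). The delicate step is the construction of a specialization that preserves the prescribed supports, which will require smoothness of $X$ at the chosen approximant together with standard real-spectrum and valuation-theoretic machinery.
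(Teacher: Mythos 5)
The equivalences $1)\Leftrightarrow 4)$, $4)\Leftrightarrow 2)\Leftrightarrow 3)$, and $1)\Leftrightarrow 5)$ in your proposal are essentially the paper's own arguments: the Central Nullstellensatz for $1)\Leftrightarrow 4)$, routine passage between ideals and their zero/vanishing loci for $2),3)\Leftrightarrow 4)$, and a direct unpacking of the definition plus radicality for $1)\Leftrightarrow 5)$. That part is fine.

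The gap is in $1)\Leftrightarrow 6)$. The paper does not attempt to re-derive this equivalence; it simply cites \cite[Lem.~2.9]{FMQ-futur} for $2)\Leftrightarrow 6)$ and lets the chain do the rest. You instead propose a direct construction, and beyond the fact that it is only a sketch, its opening move is already wrong. You suggest picking ``a smooth real point $x$ of $X$ in $\Z(\p)$'' and building the order $\alpha$ from local coordinates in the regular local ring $\SO_{X,x}$. But a central prime $\p$ need not have \emph{any} smooth points of $X$ in its real zero set: the paper's own Whitney umbrella example has $\p=(x,y)$ central, yet $\Z(\p)$ (the whole $z$-axis, including the stick) lies entirely inside the singular locus of $X$, so $\Z(\p)\cap X_{reg}(\R)=\emptyset$. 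Centrality only guarantees that $\Z(\p)$ meets the Euclidean \emph{closure} $\overline{X_{reg}(\R)}^E$ in a Zariski-dense subset, which is much weaker; the point $x$ you would pick is a limit of smooth points but generally not itself smooth, so $\SO_{X,x}$ is not regular and the local-coordinate construction does not start. Building the specialization $\alpha\leadsto\beta$ from such a limit requires exactly the real-spectrum/valuation argument carried out in \cite[Lem.~2.9]{FMQ-futur}, which you acknowledge you have not supplied. As written, then, $1)\Rightarrow 6)$ is not proved and cannot be fixed by the route you indicate; you would need either to quote the cited lemma or to reproduce its specialization argument, taking as input only density of the central locus rather than smoothness of a chosen point.
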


\begin{proof}
The equivalence between 1) and 4) is given by
Theorem \ref{centralNull}.

The equivalence between 2) and 6) is \cite[Lem. 2.9]{FMQ-futur}.

Let us prove that 4), 3) and 2) are equivalent. Remark that we always
have $\p\subset \I(\Z(\p))\subset\I(\Z(\p)\cap \Cent X)$ and $\p\subset \I(\V(\p))\subset\I(\V(\p)\cap \Cent X)$ . Thus if
we assume that $\p=\I(\Z(\p)\cap \Cent X)$ (resp. $\p=\I(\V(\p)\cap \Cent X)$ ) then $\I(\Z(\p))=\p$ (resp. $\I(\V(\p))=\p$)
and it follows that $\p$ is a real ideal by the real Nullstellensatz (resp. $\p$ is a radical ideal by the classical Nullstellensatz),
moreover $\I(\Z(\p))=\I(\Z(\p)\cap \Cent X)$ (resp. $\I(\V(\p))=\I(\V(\p)\cap \Cent X)$) and it says that
$\overline{\Z(\p)\cap \Cent X}^{Z}=\Z(\p)$ (resp. $\overline{\V(\p)\cap \Cent X}^{Z}=\V(\p)$). We have proved 4)
implies 2) and 3).

Assume $\overline{\Z(\p)\cap \Cent X}^{Z}=\Z(\p)$ (resp. $\overline{\V(\p)\cap \Cent X}^{Z}=\V(\p)$) and
let $f\in\R[X]$. It follows that $\Z(\p)\subset\Z(f)$ (resp. $\V(\p)\subset\V(f)$) if and only if
$(\Z(\p)\cap \Cent X)\subset\Z(f)$ (resp. $(\V(\p)\cap \Cent X)\subset\V(f)$) and thus we get 4).

Clearly 5) implies 1). Assume $\p$ is central. Let $f\in\R[X]$ such that there exist $m\in\N$ and $g\in
 \sum \K(X)^2$ such that $f^{2m}+g\in\p$. Then $f^m\in\p$
  and thus $f\in\p$ since $\p$ is radical, it proves that 1) implies 5).
\end{proof}

\begin{ex}
Let $X$ be the Whitney umbrella i.e the real algebraic
surface with equation $y^2=zx^2$. Then $\p=(x,y)\subset \R[X]$ is a
central prime ideal since $\Z(\p)$ (the ``$z$''-axis and the stick of
the umbrella) meets $\Cent X$ in dimension one (the intersection
is half of the stick).
\end{ex}

\begin{ex} Let $X$ be the Cartan umbrella i.e the real algebraic
surface with equation $x^3=z(x^2+y^2)$. Then $\p=(x,y)\subset \R[X]$
is a real prime ideal but not a
central ideal by 2) of Proposition \ref{equivcentralprime} since $\Z(\p)$ (the ``$z$''-axis and the stick of
the umbrella) meets $\Cent X$ in a single point. We prove now directly that $\p$ is not central:\\
We have $$b=x^2+y^2-z^2=x^2+y^2-\frac{x^6}{(x^2+y^2)^2}=\frac{3x^4y^2+3x^2y^4+y^6}{(x^2+y^2)^2}\in (\sum\K(X)^2)\cap \R[X]$$
thus $z^2+b=x^2+y^2\in\p$ but $z\not\in\p$.
\end{ex}

We give a central version of \cite[Lem. 4.1.5]{BCR}.
\begin{prop}
  \label{centralnoeth}
Assume $A$ is noetherian. If $I\subset A$ is a central ideal
then the minimal prime ideals containing $I$ are central ideals.
\end{prop}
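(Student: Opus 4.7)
The plan is to mimic the standard argument used in the real case (\cite[Lem.~4.1.5]{BCR}), adapting it to the definition of central ideal where the ``error term'' $b$ is allowed to lie in $\sum\K(A)^2$ rather than merely in $\sum A^2$. The only non-cosmetic point is to check that the extra factor introduced in the classical trick can be absorbed without leaving $\sum\K(A)^2$.

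First I would use that $A$ is noetherian to conclude there are only finitely many minimal primes $\p_1,\ldots,\p_n$ of $A$ containing $I$. Since $I$ is central it is in particular real, hence radical by the remark following Definition~\ref{defidealcentral}, so
\[
I \;=\; \Rad I \;=\; \p_1\cap\cdots\cap\p_n.
\]
Fix $i$. By minimality, for every $j\neq i$ we have $\p_j\not\subset\p_i$, so we may pick $c_j\in\p_j\setminus\p_i$; set $c=\prod_{j\neq i}c_j$. Then $c\in\bigcap_{j\neq i}\p_j$ and, since $\p_i$ is prime, $c\notin\p_i$.

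Next I would verify centrality of $\p_i$. Suppose $a\in A$ and $b\in\sum\K(A)^2$ satisfy $a^2+b\in\p_i$. Writing $b=\sum_k(x_k/y_k)^2$ with $x_k,y_k\in A$, we have
\[
c^{2}b \;=\; \sum_k\!\Bigl(\tfrac{cx_k}{y_k}\Bigr)^{\!2} \;\in\; \sum\K(A)^{2},
\]
so $c^2 b$ is still a sum of squares in $\K(A)$. Now consider
\[
(ca)^{2}+c^{2}b \;=\; c^{2}(a^{2}+b).
\]
This element lies in every $\p_j$: for $j=i$ because $a^{2}+b\in\p_i$, and for $j\neq i$ because $c\in\p_j$. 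Hence it lies in $I=\bigcap_j\p_j$. Applying the centrality of $I$ to $(ca)^{2}+c^{2}b\in I$ gives $ca\in I\subset\p_i$, and primality of $\p_i$ together with $c\notin\p_i$ forces $a\in\p_i$. Thus $\p_i$ is central.

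I do not expect any serious obstacle: the whole argument hinges on the observation that multiplying a sum of squares in $\K(A)$ by $c^2$ keeps it a sum of squares in $\K(A)$, which is immediate. The noetherian hypothesis is only used to produce finitely many minimal primes and hence the representation $I=\p_1\cap\cdots\cap\p_n$, which in turn is what makes the ``separating element'' $c$ work.
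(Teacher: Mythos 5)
Your proof is correct and follows essentially the same argument as the paper's: both use that $I$ is radical to write $I=\p_1\cap\cdots\cap\p_n$, both introduce the separating element $c=\prod_{j\neq i}c_j$ to push $c^2(a^2+b)$ into $I$, and both then apply centrality of $I$ and primality of $\p_i$. The only differences are cosmetic: you argue directly rather than by contradiction, which lets you absorb the $n=1$ case into the general argument (empty product $c=1$) instead of treating it separately as the paper does.
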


\begin{proof}
Let $\p_1,\ldots\p_l$ be the minimal prime ideals containing $I$. If
$l=1$ then $I=p_1$ since $I$ is radical and thus the proof is done in
that case. So we assume $l>1$ and $\p_1$ is not central. There exist
$a\in A\setminus \p_1$, $b_1,\ldots,b_k\in \K(A)$ such that $a^{2}+b_1^2+\cdots+b_k^2\in
  \p_1$. We choose $a_2,\ldots,a_l$ such that $a_i\in
  \p_i\setminus\p_1$ and we set $c=\prod_{i=2}^l a_i$. Then
  $(ac)^2+(b_1c)^2+\cdots+(b_lc)^2\in\bigcap_{i=1,\ldots,l}\p_i=I$
  ($I$ is radical). Thus $ac\in\p_1$, a contradiction.
\end{proof}

\begin{defn}
  \label{defradcent}
  Let $I\subset A$ be an ideal. We define the
  central radical of $I$, denoted by $\RadCe I$, as follows:
  $$\RadCe I=\{a\in A|\,\, \exists m\in\N \,\, \exists b\in \sum\K(A)^2\,\, {\rm such\,\,that}\,\,a^{2m}+b\in I
  \}.$$
\end{defn}

We give a central version of Proposition \ref{propradreal}. It can be derived from \cite[Prop. 4.2.6]{BCR} using the theory of convex ideals for a cone.
\begin{prop}
\label{propradcent}
Let $I\subset A$ be an ideal. We have:
\begin{enumerate}
\item[1)] $\RadCe I$ is the smallest central ideal of $A$ containing
  $I$.
\item[2)] $$\RadCe I=\bigcap_{\p\in\RCent A,\,I\subset\p}\p.$$
\end{enumerate}
\end{prop}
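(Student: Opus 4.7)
Following the hint given just before the statement, my plan is to reduce both assertions to \cite[Prop.~4.2.6]{BCR}---the general result on radicals (``convex ideals'' in BCR's terminology) associated with a precone---applied to a carefully chosen precone of $A$ that encodes centrality.

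First, I introduce $P:=(\sum\K(A)^2)\cap A$ and check that $P$ is a precone of $A$ in the sense of \cite[Ch.~4]{BCR}: closed under addition, closed under multiplication, and containing every square of an element of $A$. Closure under addition is immediate. Closure under multiplication uses the identity $(\sum x_i^2)(\sum y_j^2)=\sum(x_iy_j)^2$ in $\K(A)$, combined with stability of $A$ under products in $\K(A)$. Every $a^2$ with $a\in A$ is already a single square in $\K(A)$, so lies in $P$.

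Second, I translate Definitions \ref{defidealcentral} and \ref{defradcent} into BCR's language. The only observation needed is that if $a\in A$, $b\in\sum\K(A)^2$ and $a^{2m}+b\in I$, then $b=(a^{2m}+b)-a^{2m}$ automatically lies in $A$, hence in $P$. Consequently the defining implication ``$a^2+b\in I\Rightarrow a\in I$'' of Definition \ref{defidealcentral} coincides with the $P$-radical condition on $I$, the set $\RadCe I$ of Definition \ref{defradcent} is the $P$-radical of $I$ in BCR's sense, and $\RCent A$ is the set of $P$-radical prime ideals of $A$.

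With this dictionary both conclusions of the proposition follow directly from \cite[Prop.~4.2.6]{BCR}: the $P$-radical of $I$ is the smallest $P$-radical ideal containing $I$ (part 1) and equals the intersection of the $P$-radical prime ideals containing $I$ (part 2). The only obstacle I anticipate is a brief consistency check that BCR's Prop.~4.2.6 is indeed phrased in terms matching our notion of $P$-radical (rather than the a~priori weaker notion of ``$P$-convex ideal''); this is a routine verification. As a backup, a direct proof can be organised on the template of \cite[Prop.~4.1.7]{BCR}: closure of $\RadCe I$ under addition through a binomial expansion combined with the stability of $P$ under products (so that high powers of a sum of squares remain sums of squares), centrality of $\RadCe I$ by the same stability, the inclusion $\RadCe I\subset\bigcap\p$ obtained by applying centrality of $\p$ to $(a^m)^2+b\in\p$ and then primality of $\p$, and the reverse inclusion via a Zorn's lemma argument producing a central prime that avoids any chosen $a\notin\RadCe I$.
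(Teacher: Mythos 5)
Your plan is correct, and it takes precisely the route the paper flags as available but does not carry out: the sentence before the proposition says ``It can be derived from \cite[Prop.~4.2.6]{BCR} using the theory of convex ideals for a cone,'' and then the paper instead gives a self-contained direct proof modelled on \cite[Prop.~4.1.7]{BCR}. Your dictionary is the right one: $P:=(\sum\K(A)^2)\cap A$ is a cone of $A$ in BCR's sense (what you call a precone), and the observation that $a^{2m}+b\in I$ with $b\in\sum\K(A)^2$ forces $b\in A$, hence $b\in P$, makes $\RadCe I$ literally equal to BCR's $P$-radical $\sqrt[P]{I}$, and $\RCent A$ the set of $P$-radical primes. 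After that, \cite[Prop.~4.2.6]{BCR} delivers both assertions. One small point worth making explicit in the write-up: the equivalence ``$I$ central $\iff$ $I$ is $P$-radical'' is not a syntactic identity, because BCR's $P$-radical condition quantifies over all exponents $2m$ while Definition~\ref{defidealcentral} only uses $m=1$; you need the (easy, and asserted in the remark after Definition~\ref{defidealcentral}) fact that a central ideal is radical to upgrade $a^m\in I$ to $a\in I$. You also implicitly allow $P$ to be an improper cone (when $-1\in\sum\K(A)^2$); that is fine, since then $\RCent A=\emptyset$ and both sides collapse to $A$, but it is worth a one-line check if BCR's statement is phrased only for proper cones.

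As for the comparison: the paper's direct proof re-runs the standard arguments (closure of $\RadCe I$ under addition via a binomial expansion, centrality of $\RadCe I$, minimality, and a Zorn-style argument producing a central prime avoiding a given $a\notin\RadCe I$), keeping the paper self-contained at the cost of length. Your route outsources all of this to the general theory of convex ideals for a cone, which is shorter and conceptually cleaner, at the cost of making the reader verify the translation into BCR's framework and confirm that their Prop.~4.2.6 covers the exact statement needed. Both are legitimate; your backup sketch of the direct proof coincides with what the paper actually does.
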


\begin{proof}
We show that $\RadCe I$ is an ideal. It is clear that $0\in\RadCe
I$. Let $a\in\RadCe I$. There exist $m\in\N$, $b_1,\ldots,b_k\in \K(A)$
such that $a^{2m}+b_1^2+\cdots+b_k^2\in I$. Let $a'\in A$. Since
$(aa')^{2m}+(b_1(a')^m)^2+\cdots+(b_k(a')^m)^2\in I$ and since $b_i(a')^m\in \K(A)$
then $aa'\in\RadCe I$. To show that $\RadCe I$ is closed under
addition then copy the proof of \cite[Prop. 4.1.7]{BCR} with the
conditions that the $b_i$ and $b'_j$ are only in $\K(A)$ rather than in
$A$.

We show that $\RadCe I$ is a central ideal. Let $a\in A$ and $b_1,\ldots,b_k\in \K(A)$ such that $a^{2}+b_1^2+\cdots+b_k^2\in \RadCe I$. Thus there exist
$m\in\N$ and $c_1,\ldots,c_l\in \K(A)$ such that $(a^{2}+b_1^2+\cdots+b_k^2)^{2m}+c_1^2+\cdots+c_l^2\in I$. It follows that there exist 
$d_1,\ldots,d_t\in\K(A)$ such that $a^{4m}+d_1^2+\cdots+d_t^2\in I$ and thus $a\in\RadCe I$.

Let $J$ be a central ideal of $A$ containing $I$. Let $a\in \RadCe
I$. There exist $m\in\N$, $b\in \sum\K(A)^2$
such that $a^{2m}+b\in J$. Thus $a^m\in J$ by
centrality of $J$ and finally $a\in J$ by radicality of $J$. The proof
of 1) is done.

We denote by $I'$ the ideal $$\bigcap_{\p\in\RCent
  A,\,I\subset\p}\p.$$ From 1) we get $\RadCe I\subset I'$. Let us
show the converse inclusion. Let $a\in A\setminus\RadCe I$. Let $J$ be
maximal among the central ideals containing $I$ but not $a$. If $J$ is
not prime then, following the proof of \cite[Prop. 4.1.7]{BCR}, we can
find $m\in\N$, $b\in \sum\K(A)^2$
such that $a^{2m}+b\in I$, it gives a
contradiction. Hence $J$ is a prime ideal and thus $a\not\in I'$.
\end{proof}

\begin{cor}
  \label{corradcent}
Let $I\subset A$ be an ideal. Then, $I$ is a
central ideal if and only if $I=\RadCe I$.
\end{cor}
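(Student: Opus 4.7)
The plan is to derive this corollary as an immediate consequence of Proposition \ref{propradcent}(1), which asserts that $\RadCe I$ is the smallest central ideal containing $I$.

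First I would record the (trivial) inclusion $I\subset\RadCe I$, which holds for every ideal $I$: given $a\in I$, one has $a^2\in I$, so taking $m=1$ and $b=0\in\sum\K(A)^2$ yields $a^{2m}+b\in I$, hence $a\in\RadCe I$ by Definition \ref{defradcent}.

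For the forward implication, I would assume $I$ is a central ideal. Then $I$ is itself a central ideal containing $I$, so the minimality statement in Proposition \ref{propradcent}(1) forces $\RadCe I\subset I$; combined with the inclusion $I\subset\RadCe I$ already established, this gives $I=\RadCe I$. For the reverse implication, I would assume $I=\RadCe I$; since Proposition \ref{propradcent}(1) guarantees that $\RadCe I$ is always a central ideal, $I$ inherits centrality.

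There is no real obstacle here: the entire statement is a repackaging of the minimality clause of the preceding proposition, and all the substantive work (showing $\RadCe I$ is an ideal, is central, and sits inside every central ideal containing $I$) has already been carried out. The corollary simply rephrases Proposition \ref{propradcent}(1) as a fixed-point characterization, mirroring the analogous fact recorded after Proposition \ref{propradreal} for real ideals.
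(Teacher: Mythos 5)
Your proof is correct and follows exactly the route the paper intends: the corollary is stated without proof precisely because it is the fixed-point reformulation of Proposition \ref{propradcent}(1), and your argument (noting $I\subset\RadCe I$, then invoking minimality for the forward direction and centrality of $\RadCe I$ for the reverse) is the standard and intended derivation.
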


To end this section, we study the existence of a central ideal.
\begin{prop}
\label{existcentralideal}
The following properties are equivalent:
\begin{enumerate}
\item[1)] $A$ is a real domain.
\item[2)] $\RCent A\not=\emptyset$.
\item[3)] $A$ has a proper central ideal.
\item[4)] $(0)$ is a central ideal of $A$.
\end{enumerate}
Assume $A$ is the coordinate ring of an irreducible affine algebraic variety over $\R$. Then the previous properties are equivalent to the two following ones:
\begin{enumerate}
\item[5)] $X_{reg}(\R)\not=\emptyset$.
\item[6)] $X(\R)$ is Zariski dense in $X(\C)$ and $\Sp A$.
\end{enumerate}
\end{prop}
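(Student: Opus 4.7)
The plan is to prove the cycle 1)$\Rightarrow$4)$\Rightarrow$3)$\Rightarrow$2)$\Rightarrow$1) and then add the two extra equivalences with 5) and 6) in the affine case. Three of these four implications are almost immediate: 4)$\Rightarrow$3) because $A$ being a domain makes $(0)$ proper; 3)$\Rightarrow$2) because, by part 2) of Proposition~\ref{propradcent}, any proper central ideal is the intersection of the central primes above it, and this intersection is the whole ring when $\RCent A$ is empty; and 2)$\Rightarrow$1) because, if $-1 \in \sum\K(A)^2$, then setting $b=-1$ makes $1^2+b=0$ lie in every central prime $\p$, so centrality gives $1\in\p$, contradicting properness of $\p$. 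The remaining implication 1)$\Rightarrow$4) is the substantive one: starting from $a\in A$ and $b=\sum_i (a_i/c_i)^2\in\sum\K(A)^2$ with $a^2+b=0$, I would clear denominators by multiplying by $(\prod_i c_i)^2$ to obtain
\[
\bigl(a\prod_i c_i\bigr)^2 + \sum_i \bigl(a_i \prod_{j\neq i} c_j\bigr)^2 = 0
\]
in $A$, then invoke reality of $(0)$ in $A$ (part 4 of Proposition~\ref{defrealdomain}) to conclude $a\prod_i c_i=0$; since $A$ is a domain and each $c_i\neq 0$, this forces $a=0$.

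For the geometric part, with $A=\R[X]$, the equivalence 1)$\Leftrightarrow$6) follows from the Real Nullstellensatz (Theorem~\ref{RealNull}): $A$ is a real domain iff $(0)$ is real iff $(0)=\I(X(\R))$, which is exactly the statement that $X(\R)$ is Zariski dense in $\Sp A$; density in $X(\C)$ reduces to the same condition by writing any complex polynomial vanishing on $X(\R)$ as $f_1+if_2$ with $f_j\in\R[X]$ and noting that both $f_j$ must then vanish on $X(\R)$. For 5)$\Leftrightarrow$2), I rely on Corollary~\ref{centgeo}, which gives $\Cent X=\overline{X_{reg}(\R)}^E$: any smooth real point yields a central maximal ideal, hence an element of $\RCent A$; conversely, any $\p\in\RCent\R[X]$ satisfies $\p=\I(\V(\p)\cap\Cent X)$ by Proposition~\ref{equivcentralprime}, so properness of $\p$ prohibits $\V(\p)\cap\Cent X=\emptyset$ and hence forces $\Cent X\neq\emptyset$, whence $X_{reg}(\R)\neq\emptyset$.

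The only step that demands any real thought is 1)$\Rightarrow$4), and it is just the denominator-clearing computation already used in the proof of Proposition~\ref{defrealdomain}; no new idea is needed. Everything else is a direct consequence of the Real Nullstellensatz, the description of $\RadCe$ and central primes in Propositions~\ref{propradcent} and~\ref{equivcentralprime}, and the identification $\Cent X=\overline{X_{reg}(\R)}^E$.
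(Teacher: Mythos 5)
Your proof is correct and follows essentially the same route as the paper: clearing denominators and invoking reality of $(0)$ for $1)\Rightarrow 4)$, the central radical description from Proposition~\ref{propradcent} (together with Corollary~\ref{corradcent}) for $3)\Rightarrow 2)$, the element $-1\in\sum\K(A)^2$ for the return to $1)$, and $\Cent X=\overline{X_{reg}(\R)}^E$ plus the Nullstellens\"atze for the geometric equivalences. The only cosmetic difference is that you arrange the algebraic implications into a single cycle $1)\Rightarrow 4)\Rightarrow 3)\Rightarrow 2)\Rightarrow 1)$ and prove the geometric part as $1)\Leftrightarrow 6)$ and $5)\Leftrightarrow 2)$ (via the Real Nullstellensatz applied to $(0)$), whereas the paper proves $1)\Rightarrow 6)\Rightarrow 5)\Rightarrow 2)$ using part 3) of Proposition~\ref{equivcentralprime}; both are valid and the underlying ideas are identical.
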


\begin{proof}
It is clear that 4) implies 2) and 3). By Proposition \ref{defrealdomain} then 4) implies 1).
Assume $A$ is a real domain. By Proposition \ref{defrealdomain}, we know that $(0)$ is a real ideal and we will prove that it is moreover a central ideal. Assume $a^2+b=0$ with $a\in A$ and $b\in \sum \K(A)^2$. It gives an identity $c^2a^2+s=0$ with $c\in A\setminus\{0\}$ and $s\in\sum A^2$. Since $(0)$ is a real ideal then it follows $a=0$. We get 1) implies 4). Since a prime ideal is proper then 2) implies 3). Assume $I\subset A$ is a proper central ideal. By Corollary \ref{corradcent} we have $I=\RadCe I$. By Proposition \ref{propradcent}, $I$ is the intersection of the central prime ideals of $A$ containing $I$, it follows that the set of central prime ideals of $A$ containing $I$ is non-empty and 3) implies 2). Let $I\subset A$ be a proper and central ideal of $A$. Assume $A$ is not a real domain. By Proposition \ref{defrealdomain}, we get that $-1\in\sum\K(A)^2$ and since $1^2+(-1)=0\in I$ and $I$ is central then it follows that $1\in I$, impossible.  Thus 3) implies 1).

Assume $A$ is the coordinate ring of an irreducible affine algebraic variety $X$ over $\R$. Assume $A$ is a real domain. We have proved that it implies $(0)$ is a central ideal. 
By 3) of Proposition \ref{equivcentralprime}, it follows that $\Cent X$ is Zariski dense in $\Sp A$. Hence $X(\R)$ is also Zariski dense in $\Sp A$ (and in $X(\C)$). It proves 
that 1) implies 6).  If $X_{reg}(\R)\not=\emptyset$ then $\Cent X\not=\emptyset$ and thus 5) implies 2). Assume $X(\R)$ is Zariski dense in $\Sp A$ then it intersects the set of regular prime ideals of $A$ which is a non-empty Zariski open subset of $\Sp A$ and thus 6) implies 5).
\end{proof}

\section{Integral extensions and lying-over properties}

In the sequel we consider rings up to isomorphisms and affine algebraic varieties up to isomorphisms. In particular, when we write an equality of rings it means they are isomorphic, the reader should remember this especially when speaking about uniqueness.

\subsection{Integral extensions and normalization}

Let $A\to B$ be an extension of domains. The extension is said of finite type (resp. finite) if it makes $B$ a finitely generated $A$-algebra (resp. $A$-module). We say that $A\to B$ is
birational if it induces an isomorphism between the fraction fields
$\K(A)$ and $\K(B)$. We say that an
element $b\in B$ is integral over $A$ if $b$ is the root of a monic
polynomial with coefficients in $A$. By \cite[Prop. 5.1]{AM}, $b$ is
integral over $A$ if and only if $A[b]$ is a finite $A$-module. This
equivalence allows to prove that $A_B'=\{b\in B|\,b\,\, {\rm is\,\,
  integral\,\, over}\,\,A\}$ is a ring called the integral closure of $A$ in
$B$. The extension $A\to B$ is said to be integral if $A_B'=B$. In
case $B=\K(A)$ then the ring $A_{\K(A)}'$ is
denoted by $A'$ and is simply called the integral closure of $A$.
The ring $A$ is called integrally closed (in $B$) if
$A=A'$ ($A=A_B'$).
If $A$ is the
coordinate ring of an irreducible affine algebraic variety $X$ over a
field $k$ then $A'$ is a
finite $A$-module (a theorem of Emmy Noether \cite[Thm. 4.14]{Ei}) and thus it is a finitely generated $k$-algebra and
so $A'$ is the coordinate ring of an irreducible affine algebraic
variety over $k$, denoted by $X'$, called the normalization
of $X$. For a morphism $\pi:X\rightarrow Y$ between two affine 
algebraic varieties over a field $k$, we denote by $\pi^*:k[Y]\to k[X]$, $f\mapsto f\circ\pi$ the associated ring morphism.
We recall that a morphism $X\rightarrow Y$ between two irreducible affine 
algebraic varieties over a field $k$ is said of finite type (resp. finite) (resp. birational) if the ring morphism
$k[Y]\rightarrow k[X]$ is of finite type (resp. finite) (resp. birational).
The inclusion $k[X]\subset k[X']=k[X]^\prime$ induces a finite and birational morphism
which we denote by $\pi':X'\rightarrow X$, called the normalization
morphism. We say that an irreducible affine 
algebraic variety $X$ over a field $k$ 
is normal if its coordinate ring is integrally
closed.

\subsection{Contraction and lying-over properties}

For an extension of rings, it is clear that the contraction of a real ideal is a real ideal.
We prove that, for an extension of domains, the contraction of a central ideal remains central.
\begin{prop}
\label{contractcentral}
Let $A\to B$ be an extension of domains. If $I$ is a central ideal of $B$ then $I\cap A$ is a central ideal of $A$. In particular, the map
$$\RCent B\to\RCent A,\,\,\q\mapsto \q\cap A$$ is well defined.
\end{prop}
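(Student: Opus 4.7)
The plan is to prove this directly from the definition of centrality, using the fact that for an extension of domains $A\hookrightarrow B$ the induced map on fraction fields $\K(A)\hookrightarrow \K(B)$ is injective, so $\sum\K(A)^2\subset\sum\K(B)^2$.

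First I would verify the statement for ideals. The contraction $I\cap A$ is automatically an ideal of $A$ (as the preimage of $I$ under $A\to B$). To check centrality, take $a\in A$ and $b\in\sum\K(A)^2$ with $a^2+b\in I\cap A$. View $a$ as an element of $B$ via the inclusion $A\hookrightarrow B$, and $b$ as an element of $\sum\K(B)^2$ via $\K(A)\hookrightarrow \K(B)$. Then $a^2+b\in I$, and centrality of $I$ in $B$ (Definition \ref{defidealcentral}) yields $a\in I$, hence $a\in I\cap A$. This shows $I\cap A$ is central.

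For the second part, I would simply combine the above with the classical fact that the contraction of a prime ideal along a ring morphism is a prime ideal. If $\q\in\RCent B$, then $\q\cap A$ is prime in $A$ (and proper, since if $1\in\q\cap A$ we would have $1\in\q$, contradicting properness of $\q$), and by the first part it is central. Hence the assignment $\q\mapsto\q\cap A$ defines a map $\RCent B\to\RCent A$.

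There is no real obstacle here: the argument is a one-line verification once one observes that the central condition only involves sums of squares taken in the fraction field, and fraction fields behave functorially with respect to extensions of domains. The subtlety worth flagging is simply that the definition of centrality depends on $\K(A)$, not on $A$ alone, so one must use the inclusion $\K(A)\subset\K(B)$ (which requires $A\to B$ to be an extension of \emph{domains}, not an arbitrary ring morphism).
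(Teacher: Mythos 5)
Your proof is correct and takes essentially the same approach as the paper, which compresses the entire argument into the observation that $\sum\K(A)^2\subset\sum\K(B)^2$; you have simply unpacked that one-liner into a direct verification of the definition.
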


\begin{proof}
The proof is clear since $\sum\K(A)^2\subset\sum\K(B)^2$.
\end{proof}

\begin{rem}
As noticed in \cite{FMQ-futur2} the result of the previous proposition cannot be generalized in the reducible case (even for extensions of reduced rings with a finite number of minimal prime ideals that are real) and it is the reason we restrict ourself to extension of domains in the paper. There are some problems if for example the contraction of a minimal prime ideal of $B$ is not a minimal prime ideal of $A$. Consider the extension 
$$A=\R[C]=\R[x,y]/(y^2-x^2(x-1))\to B=\R[C]\times (\R[C]/(x,y)), \,\, f\mapsto (f,f(0,0))$$
The extension $A\to B$ is associated to the morphism of affine algebraic varieties $C'\to C$ where $C$ is the plane cubic with a real isolated point, $C'$ is the disjoint union of $C$ and a real point, the morphism is the identity on $C$ and maps the point onto the origin.  The contraction to $A$ of the minimal and central prime ideal $\R[C]\times (0)$ (central here means central in its irreducible component) of $B$ is the real maximal ideal corresponding to the isolated real point and thus the contracted ideal is not central (Proposition \ref{equivcentralprime}).
\end{rem}

We have the following lying over properties:
\begin{prop}
  \label{lying-over}
Let $A\to B$ be an integral extension of domains. Then:
\begin{itemize}
\item[1)] $\Sp B\to\Sp A$, $\q\mapsto \q\cap A$ is surjective.
\item[2)] $\Max B\to\Max A$ is well defined and surjective.
\item[3)] If $A\to B$ is birational then the map
    $\RCent B\to\RCent A$ is surjective.
\item[4)] If $A\to B$ is birational then the map
    $\CentMax B\to\CentMax A$ is well defined and surjective.
\end{itemize}
\end{prop}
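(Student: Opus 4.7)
Parts 1) and 2) are classical integral-extension facts. For 1), lying-over follows by localizing at $\p$: in the integral extension $A_\p \hookrightarrow (A\setminus\p)^{-1}B$, the expanded ideal $\p \cdot (A\setminus\p)^{-1}B$ is proper (otherwise an integral relation puts $1$ into $\p A_\p$), and any maximal ideal of $(A\setminus\p)^{-1}B$ above it pulls back along $B \to (A\setminus\p)^{-1}B$ to the desired $\q \in \Sp B$ with $\q \cap A = \p$. For 2), well-definedness follows because for $\n \in \Max B$ the extension $A/(\n\cap A) \hookrightarrow B/\n$ is integral from a domain into a field, which forces the domain to be a field (invert using the minimal polynomial relation). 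Surjectivity combines 1) with Cohen--Seidenberg incomparability of primes above a fixed prime: starting from $\m \in \Max A$ and $\q$ over $\m$ from 1), any strict superprime $\q' \supsetneq \q$ would satisfy $\q'\cap A \supsetneq \m$, contradicting the maximality of $\m$ (since $\q' \neq B$).

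For 3), birationality gives $\K(A)=\K(B)$, hence $\sum\K(A)^2=\sum\K(B)^2$, and every element of $B$ is a fraction $u/v$ with $u,v \in A$. Given $\p \in \RCent A$, the plan is to study the central radical $\RadCe(\p B)$ and show $\RadCe(\p B) \cap A = \p$. Granting this, $\RadCe(\p B)$ is proper, and by Proposition \ref{propradcent} it is the intersection of all central primes of $B$ containing $\p B$; Cohen--Seidenberg incomparability together with the equality $\p B \cap A = \p$ then singles out such a central prime contracting exactly to $\p$. To show $\RadCe(\p B) \cap A \subset \p$, take $a$ in this contraction: there exist $m \in \N$, $s \in \sum\K(A)^2$, $p_i \in \p$ and $b_i \in B$ with $a^{2m} + s = \sum_i p_i b_i$; clearing a common denominator $d \in A \setminus \{0\}$ so that $d b_i \in A$ and $d^2 s \in \sum A^2$, multiplication by $d^2$ gives the identity $(a^m d)^2 + d^2 s = \sum_i p_i(d^2 b_i) \in \p$ in $A$ with $d^2 s \in \sum\K(A)^2$; centrality of $\p$ yields $a^m d \in \p$, and a conductor-type choice of $d \notin \p$ forces $a \in \p$.

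For 4), given $\m \in \CentMax A$, apply 3) to obtain $\n \in \RCent B$ with $\n \cap A = \m$; the maximality of $\m$ plus Cohen--Seidenberg incomparability forces $\n \in \Max B$ (any strict superprime would contract to a strict super-ideal of $\m$, impossible), so $\n \in \CentMax B$. Well-definedness of $\CentMax B \to \CentMax A$ combines 2) with Proposition \ref{contractcentral}. The principal obstacle is the denominator-clearing step in 3): birationality is essential to keep the cleared identity inside $A$, and centrality of $\p$ (rather than mere reality) is exactly what turns the resulting sum-of-squares identity into membership in $\p$; in addition, the extraction of a central prime contracting exactly to $\p$ is delicate and depends on a careful use of incomparability combined with $\p B \cap A = \p$.
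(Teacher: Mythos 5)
Your treatment of parts 1), 2) and 4) is fine (and 4) matches the paper's reduction to 2) and 3)). The genuine gap is in 3). Your plan hinges on the claim $\RadCe(\p B)\cap A\subset\p$, and your proof of it asserts that a ``conductor-type choice of $d\notin\p$'' can clear the denominators of the $b_i$'s. There is no such choice in general. The conductor ideal $(A:b_i)=\{d\in A:db_i\in A\}$ is nonzero but can very well be contained in $\p$: for the Whitney umbrella $A=\R[x,y,z]/(y^2-zx^2)\hookrightarrow B=\R[x,Y,z]/(Y^2-z)$ with the central prime $\p=(x,y)$, one computes $(A:Y)=\p$ exactly, so \emph{every} $d$ clearing the denominator of $b_i=Y$ lies in $\p$. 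Your argument then yields $a^m d\in\p$ with $d\in\p$, which is vacuous. (You flag the denominator step as ``delicate'' but then present it as done; it isn't.) A second, smaller gap: from $\RadCe(\p B)\cap A=\p$ you invoke ``incomparability'' to produce a central prime of $B$ contracting to exactly $\p$, but incomparability does not single one out from the intersection $\RadCe(\p B)=\bigcap_j\q_j$ --- a prime can be an intersection of infinitely many strictly larger primes. The correct deduction (granting the contraction equality) is to localize at $S=A\setminus\p$, check that $S^{-1}\RadCe(\p B)$ is a proper \emph{central} ideal of $S^{-1}B$, and then take any central prime of $S^{-1}B$ containing it; such a prime necessarily lies over $\p A_\p$ and pulls back to the desired $\q$.

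It is also worth noting that the identity $\RadCe(\p B)\cap A=\p$ is in fact \emph{equivalent} to the lying-over you want (given lying-over, every central $\q\supset\p B$ contains $\RadCe(\p B)$, and $\q\cap A=\p$; conversely the localization argument above gives lying-over), so proving it by elementary means is exactly as hard as the proposition itself. The paper does not attempt this: it cites \cite[Prop.~2.8]{FMQ-futur2} for the central lying-over, and the surrounding Remark explains that the real input is surjectivity of $\Sp_r\K(B)\to\Sp_r\K(A)$ (automatic for birational extensions). In other words, the central lying-over rests on genuinely real-algebraic machinery (specialization in the real spectrum / convexity), not on an ideal-theoretic denominator-clearing argument; your Whitney computation shows why the latter cannot work on its own.
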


\begin{proof}
See \cite[Thm. 9.3]{Ma} for statements 1) and 2). From Proposition \ref{contractcentral} and \cite[Prop. 2.8]{FMQ-futur2} we get 3) in the case $A$ is a real domain. 
Assume $A$ is a domain but $\K(A)$ is not real and $A\to B$ is integral and birational. Then $\K(B)$ is not real and by Proposition \ref{existcentralideal}
then $\RCent A=\RCent B=\emptyset$ and we get 3) in that case.
Statement 4) is a consequence of 2) and 3).
\end{proof}

\begin{rem}
We do not have a lying over property for real prime ideals even for birational extensions. Consider for example the integral and birational extension 
$A=\R[x,y]/(y^2-x^2(x-1))\to\R[x,Y]/(Y^2-(x-1))=B$ given by $x\mapsto x$ and $y\mapsto Yx$. The extension is integral and birational since it corresponds to the normalization of 
the plane cubic curve with a real isolated node given by the equation $y^2-x^2(x-1)=0$ and thus $B=A'$. Over the real but not central ideal $(x,y)$ in $A$ there is a unique ideal of $B$
given by $(x,Y^2+1)$ and this ideal is not real.
This is the principal reason we work here with the central spectrum rather than the real spectrum. 
\end{rem}

\begin{rem}
Consider for example the integral extension $A=\R[x]\to\R[x,y]/(y^2-x)=B$, we do not have any real prime ideal of $B$ lying over the real and central prime ideal $(x+1)$ of $A$.
This example shows that we do not have a central lying over property for integral extensions of domains which are not birational. From \cite{FMQ-futur2}, the central lying-over property exists more generally for an integral extension $A\to B$ of domains such that 
$\Sp_r \K(B)\to\Sp_r \K(A)$ is surjective.
\end{rem}

\section{Central seminormalization for rings}

\subsection{Centrally subintegral extension}

Recall (\cite{V}) that an extension $A\to B$ is said subintegral if it
is an integral extension, for
any prime ideal $\p\in\Sp A$ there exists a unique
prime ideal $\q\in\Sp B$ lying over $\p$ (it means $\Sp B\to\Sp A$ is bijective), and furthermore for any such
pair $\p$, $\q$ the induced injective map on
the residue fields $k(\p)\rightarrow k(\q)$ is an isomorphism. To
characterize the last property, we say that $\Sp B\to\Sp A$ is
equiresidual. In summary an integral extension $A\to B$ is subintegral if and only if $\Sp
B\to\Sp A$ is bijective and equiresidual. Such a concept is related to the notion of "radiciel" morphism of schemes introduced by Grothendieck \cite[I Def. 3.7.2]{Gr1}.
Swan gave another nice characterization of a subintegral extension \cite[Lem. 2.1]{Sw}: an extension $A\to B$ is subintegral if $B$ is integral over $A$ and for all morphisms $A\to K$ into a field $K$, there exists a unique extension $B\to K$.

In the same spirit, we can give a natural definition of a central
subintegral extension.

\begin{defn}
\label{defcentsubint}
Let $A\to B$ be an extension of domains.
We say that $A\to B$ is centrally subintegral or $s_c$-subintegral for short (we follow the notation used in \cite{FMQ-futur2})
  if it is an integral extension, and if the map $\RCent B\to\RCent A$ is bijective
  and equiresidual.
\end{defn}

\begin{rem}
\label{triviallyempty}
From Propositions \ref{existcentralideal} and \ref{contractcentral}, any integral extension $A\to B$ of a non-real domain $A$ is trivially centrally subintegral since $\RCent A=\RCent B=\emptyset$.
\end{rem}

\begin{rem}
\label{remscbirat}
Let $A\to B$ be a centrally subintegral extension of domains and assume $A$ is real. By 4) of Proposition \ref{existcentralideal} then $(0)$ is a central ideal of $A$.
Since the null ideal of $B$ is the unique prime ideal of $B$ lying over the null ideal of $A$ then, by bijectivity of the central spectra, $(0)$ is also a central ideal of $B$. By equiresiduality then the extension $A\to B$ is birational.
\end{rem}

\begin{ex} The finite extension $A=\R[x]\to\R[x,y]/(y^2-x^3)=B$ satisfies the property that $\CentMax B\to\CentMax A$ is bijective and equiresidual but $A\to B$ is not birational
and so $A\to B$ is not centrally subintegral.
\end{ex}

\begin{ex} The finite extension $\R[x,y]/(y^2-x^3)\to \R[t]$ given by $x\mapsto t^2$ and $y\mapsto t^3$ (corresponding to the normalization of the cuspidal curve) is centrally subintegral.
\end{ex}

From \cite[Lem. 2.1]{Sw} we derive another characterization of a centrally subintegral extension.
\begin{prop}
\label{Swan}
An extension $A\to B$ is centrally subintegral if $B$ is integral over $A$ and for all morphisms $\varphi:A\to K$ into a field $K$ with $\ker\varphi\in\RCent A$, there exists a unique extension $\psi:B\to K$ such that $\ker\psi\in\RCent B$.
\end{prop}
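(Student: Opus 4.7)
The plan is to mirror Swan's argument \cite[Lem. 2.1]{Sw} for ordinary subintegral extensions, replacing the full spectrum by the central spectrum throughout. I would split the equivalence into two implications, treating the converse as the more delicate one, and extract surjectivity, injectivity and equiresiduality of $\RCent B\to\RCent A$ from three separate applications of the extension-uniqueness hypothesis.

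For the forward direction, suppose $A\to B$ is centrally subintegral and let $\varphi:A\to K$ be a ring map with $\ker\varphi=\p\in\RCent A$. Let $\q$ be the unique central prime of $B$ lying over $\p$; equiresiduality provides an isomorphism $k(\p)\simeq k(\q)$. Since $\varphi$ factors as $A\to A/\p\hookrightarrow k(\p)\to K$, the composition $B\to B/\q\hookrightarrow k(\q)\simeq k(\p)\to K$ gives an extension $\psi:B\to K$ with $\ker\psi=\q\in\RCent B$. For uniqueness, any such $\psi'$ has $\ker\psi'\in\RCent B$ contracting to $\p$, hence equal to $\q$ by bijectivity; the induced field map $k(\q)\to K$ is then forced by its restriction to $A/\p$, since $k(\q)=k(\p)$ is generated as a field by the image of $A/\p$.

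For the converse, assume $B$ is integral over $A$ and the extension-uniqueness property holds. Surjectivity of $\RCent B\to\RCent A$: given $\p\in\RCent A$, apply the hypothesis to the canonical map $\varphi:A\to k(\p)$ to produce $\psi:B\to k(\p)$ with $\ker\psi\in\RCent B$ contracting to $\p$. Injectivity: given $\q_1,\q_2\in\RCent B$ both contracting to $\p$, fix an algebraic closure $K=\overline{k(\p)}$; since each $k(\q_i)/k(\p)$ is algebraic (as $B/\q_i$ is integral over $A/\p$), choose $k(\p)$-algebra embeddings $k(\q_i)\hookrightarrow K$ to obtain morphisms $\psi_i:B\to K$ that both extend the same $\varphi:A\to K$ and have central kernels $\q_i$, so the uniqueness forces $\q_1=\q_2$. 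Equiresiduality: with the (now unique) $\q\in\RCent B$ above $\p$, the map $\psi:B\to k(\p)$ from the surjectivity step has kernel $\q$ and yields $k(\q)\hookrightarrow k(\p)$; composed with the natural arrow $k(\p)\to k(\q)$ it equals the identity on $A/\p$ and hence on $k(\p)$, so both arrows are mutually inverse isomorphisms.

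The main obstacle is the injectivity step of the converse, where one must realize the a priori incomparable quotient fields $k(\q_1)$ and $k(\q_2)$ as $k(\p)$-subalgebras of one common field; this is precisely where integrality is used, guaranteeing that each $k(\q_i)$ embeds into $\overline{k(\p)}$ over $k(\p)$. A smaller subtlety in the forward direction is that the uniqueness of the field map $k(\q)\to K$ relies on equiresiduality, which forces $k(\q)$ to be generated as a field by the image of $A/\p$ and thus determined by its restriction there.
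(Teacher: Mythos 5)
The paper gives no proof of this proposition---it only cites Swan's Lemma 2.1 and leaves the adaptation to the reader---so your write-up supplies an argument the author left implicit. Your proof is correct and is exactly the natural translation of Swan's argument to the central spectrum. Both directions check out: the forward direction correctly uses bijectivity to pin down $\ker\psi'=\q$ and equiresiduality plus the fact that $k(\p)$ is generated by $A/\p$ to pin down the induced field map; the converse correctly extracts surjectivity from the canonical map $A\to k(\p)$, injectivity by embedding the (algebraic, by integrality) residue fields $k(\q_i)$ into a common $\overline{k(\p)}$ over $k(\p)$, and equiresiduality from the retraction $k(\q)\to k(\p)$ furnished by the surjectivity step, which forces the natural injection $k(\p)\to k(\q)$ to be an isomorphism. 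One reading note: the statement is phrased as a one-directional ``if,'' but the surrounding text calls it ``another characterization,'' and Swan's Lemma 2.1 (similarly phrased) is an equivalence, so you are right to prove both implications.
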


We want now to characterize differently these centrally subintegral extensions in the case
we work with geometric rings.

Let $X$ be an irreducible affine algebraic variety over
$\R$. A (resp. irreducible) real algebraic subvariety $V$ of $X$ is a
closed Zariski subset of $\Sp \R[X]$ of the form 
$V=\V(I)=\{\p\in\Sp \R[X]|\,\,I\subset \p\}\simeq \Sp(\R[X]/I)$ for $I$ an
(resp. prime) ideal of $\R[X]$. In that case the real part of
$V$, denoted by $V(\R)$, is the closed Zariski subset of $X(\R)$ given
by $\Z(I)$. An algebraic subvariety $V$ of $X$ is said central in $X$ if 
$V=\V(I)\simeq \Sp(\R[X]/I)$ for $I$ a central ideal in $\R [X]$. By Theorem
\ref{centralNull}, an irreducible real algebraic subvariety $V$ of $X$ is central in $X$
if and only if $\overline{V(\R)\cap \Cent X}^Z=V(\R)$.

\begin{rem} The stick is central in the Whitney umbrella but it is not the case
 in the Cartan umbrella.
\end{rem}

\begin{rem} For an irreducible real algebraic subvariety $V$ of $X$, the properties "$V$ is central" and "$V$ is central in $X$" are distinct. As example, take the stick of the Cartan umbrella.
\end{rem}

\begin{defn}
Let $\pi:Y\to X$ be a dominant morphism between
irreducible affine algebraic varieties over $\R$. We say that $\pi$ is centrally subintegral or $s_c$-subintegral if the extension $\pi^*:\R[X]\to \R[Y]$ is $s_c$-subintegral.
\end{defn}

Let $\pi:Y\to X$ be a dominant morphism between
irreducible affine algebraic varieties over $\R$. By
Proposition \ref{contractcentral}, we have an associated map
$\RCent \R[Y]\to\RCent \R[X]$. We say that $\pi:Y\to X$ is centrally hereditarily birational
if for any irreducible real algebraic subvariety $V=\V(\p)\simeq\Sp(\R[Y]/\p)$ central in $Y$, the
morphism $\pi_{|V}: V\to W=\V(\p\cap \R[X])\simeq\Sp(\R[X]/(\p\cap\R[X]))$ is 
birational i.e the extension $k(\p\cap \R[X])=\K(W)\to k(\p)=\K(V)$ is an isomorphism. By Proposition \ref{existcentralideal} a centrally hereditarily birational morphism 
$Y\to X$ is birational if $X_{reg}(\R)\not=\emptyset$.
From above remarks we easily get:
\begin{prop}
 \label{heredbirat=equiresid}
Let $\pi:Y\to X$ be a dominant morphism between
irreducible affine algebraic varieties over $\R$. The following properties are equivalent:
\begin{enumerate}
 \item[1)] The morphism $\pi:Y\to X$ is centrally hereditarily birational.
 \item[2)] The map $\RCent \R[Y]\to \RCent \R[X]$ is equiresidual.
\end{enumerate}
\end{prop}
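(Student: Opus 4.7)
The plan is to show that the two conditions are essentially literal translations of each other once one unpacks the bijective correspondence between central prime ideals of $\R[Y]$ and irreducible real algebraic subvarieties of $Y$ that are central in $Y$.

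First I would set up the dictionary. By the definitions introduced just before the proposition, an irreducible real algebraic subvariety $V$ of $Y$ is central in $Y$ exactly when $V=\V(\q)\simeq\Sp(\R[Y]/\q)$ for some $\q\in\RCent\R[Y]$, and this assignment $\q\mapsto V$ is a bijection. Under this correspondence the coordinate ring of $V$ is the domain $\R[Y]/\q$, so the function field $\K(V)$ equals the residue field $k(\q)$ of $\R[Y]$ at $\q$.

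Next, given such a $\q\in\RCent\R[Y]$, I would identify the target of the restricted morphism. The contraction $\p:=\q\cap\R[X]$ lies in $\RCent\R[X]$ by Proposition \ref{contractcentral}, so $W=\V(\p)\simeq\Sp(\R[X]/\p)$ is also an irreducible central subvariety, and the ring map $\R[X]/\p\to\R[Y]/\q$ induced by $\pi^\ast$ is injective (since $\p$ is by definition the kernel of $\R[X]\to\R[Y]/\q$). Hence $\pi_{|V}:V\to W$ is dominant and corresponds at the level of fraction fields to the inclusion $k(\p)=\K(W)\hookrightarrow\K(V)=k(\q)$. Consequently $\pi_{|V}$ is birational if and only if the residue field extension $k(\p)\to k(\q)$ is an isomorphism.

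Finally I would conclude. Condition (1) asks that $\pi_{|V}$ be birational for every irreducible central subvariety $V$ of $Y$; by the bijection above this is the same as saying that for every $\q\in\RCent\R[Y]$ with contraction $\p=\q\cap\R[X]$ the induced map $k(\p)\to k(\q)$ is an isomorphism, which is precisely the equiresiduality of $\RCent\R[Y]\to\RCent\R[X]$ expressed in condition (2). There is no real obstacle here: the only point to verify carefully is the dominance of $\pi_{|V}$ (so that birationality is correctly characterised by isomorphism of function fields) and that the target $W$ is again central, which is exactly the content of Proposition \ref{contractcentral}.
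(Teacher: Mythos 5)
Your proof is correct and takes essentially the same approach as the paper; in fact the paper gives no written proof at all (``From above remarks we easily get''), precisely because the definition of ``centrally hereditarily birational'' is already phrased in terms of the residue field extension $k(\p\cap\R[X])\to k(\p)$ being an isomorphism for every $\p\in\RCent\R[Y]$, which is verbatim the equiresiduality of $\RCent\R[Y]\to\RCent\R[X]$. You simply made the dictionary between central prime ideals and irreducible central subvarieties explicit, which is exactly the intended (and only) content of the argument.
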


From Proposition \ref{lying-over}, with an additional finiteness hypothesis we get:
\begin{cor}
 \label{corheredbirat=equiresid}
Let $\pi:Y\to X$ be a finite morphism between
irreducible affine algebraic varieties over $\R$. The following properties are equivalent:
\begin{enumerate}
 \item[1)] The morphism $\pi:Y\to X$ is centrally hereditarily birational and the map $\RCent \R[Y]\to\RCent \R[X]$ is bijective.
 \item[2)] $\pi$ is $s_c$-subintegral.
\end{enumerate}
\end{cor}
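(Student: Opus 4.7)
The plan is to match the clauses of Definition \ref{defcentsubint} with the clauses of condition 1), letting Proposition \ref{heredbirat=equiresid} handle the translation between the geometric and algebraic phrasings.

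First, since $\pi$ is finite, the coordinate-ring morphism $\pi^*:\R[X]\to\R[Y]$ is a module-finite extension of domains and is therefore an integral extension; in particular it is injective, so $\pi$ is dominant. This already secures the ``integral extension'' clause of Definition \ref{defcentsubint} for free and puts us in the dominant-morphism setting required by Proposition \ref{heredbirat=equiresid}.

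Next, applying that proposition, ``$\pi$ is centrally hereditarily birational'' is equivalent to ``$\RCent\R[Y]\to\RCent\R[X]$ is equiresidual''. Substituting this into condition 1), we see that 1) is the same as the conjunction ``bijective and equiresidual map on central spectra''; together with the integrality already secured, this matches Definition \ref{defcentsubint} exactly. Both directions of the equivalence then fall out at once.

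I do not expect any real obstacle: the corollary is a packaging result. Proposition \ref{heredbirat=equiresid} does the substantive work of linking centrally hereditarily birational morphisms to equiresiduality of the central spectrum map, and the finiteness hypothesis reduces integrality to a reflex. Proposition \ref{lying-over} plays no direct role in the argument; it is cited in the surrounding text only because the ``additional finiteness hypothesis'' sharpens its integral-extension setup into the finite-morphism form needed here.
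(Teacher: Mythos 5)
Your proof is correct and is essentially what the paper intends: finiteness supplies integrality, Proposition \ref{heredbirat=equiresid} translates ``centrally hereditarily birational'' into ``equiresidual,'' and the remaining clauses then match Definition \ref{defcentsubint} term for term; your reading that Proposition \ref{lying-over} is cited mainly to set the integral-extension context rather than as a step in the derivation is also fair. The one wrinkle is that dominance does not actually follow from finiteness alone (a closed embedding of a point into a line is finite but not dominant); rather, dominance is already built into both sides of the equivalence, since ``centrally hereditarily birational'' is defined only for dominant morphisms and ``$s_c$-subintegral'' requires $\pi^*$ to be an extension of domains, so the argument goes through once this implicit hypothesis is acknowledged.
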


Let $X$ be an irreducible affine algebraic variety over
$\R$ such that $X_{reg}(\R)\not=\emptyset$. 
Following \cite{FMQ-futur2}, we denote by $\SR(\Cent X)$, called
the ring of rational continuous functions on $\Cent X$, the ring
of continuous functions on $\Cent X$ that are rational on $X$ i.e coincide with a regular
function on a non-empty Zariski open subset of $X(\R)$ intersected
with $\Cent X$. We denote by $\SRR (\Cent X)$, called the ring
of regulous functions on $\Cent X$, the subring of $\SR(\Cent X)$
given by rational continuous functions $f\in\SR(\Cent X)$ that satisfies the additional
property that for any irreducible real algebraic subvariety $V=\V(\p)$ for $\p\in\RCent \R[X]$ of
$X$ then the restriction of $f$ to $V(\R)\cap \Cent X$ is
rational on $V$ i.e lies in $k(\p)$. Remark that for a variety with at least a smooth real closed point,
then being rational, rational on the real closed points or rational on the central closed points is the same (Proposition \ref{existcentralideal}).

Let $\pi:Y\to X$ be a finite and birational morphism between
irreducible affine algebraic varieties over $\R$. By
Proposition \ref{lying-over}, we have associated surjective maps
$\RCent \R[Y]\to\RCent \R[X]$ and $\Cent Y\to\Cent X$. The
composition by $\pi$ induces natural morphisms $\SR(\Cent
X)\to\SR(\Cent Y)$ and $\SRR(\Cent
X)\to\SRR(\Cent Y)$. We say that the map $\Cent Y\to\Cent
X$ is biregulous if it is bijective and the inverse
bijection is a regulous map i.e its component are in $\SRR(\Cent
X)$. Such a concept is related to Grothendieck's notion of universal homeomorphism between schemes \cite[I 3.8]{Gr1}.

The following theorem from \cite{FMQ-futur2} explains how $s_c$-subintegral extensions and regulous functions are related.
\begin{thm} \cite[Lem. 3.13, Thm. 3.16]{FMQ-futur2}\\
  \label{scgeom}
Let $\pi:Y\to X$ be a finite and birational morphism between
irreducible affine algebraic varieties over $\R$. The following
properties are equivalent:
\begin{enumerate}
\item[1)] $\pi$ is $s_c$-subintegral.
\item[2)] The morphism $\pi:Y\to X$ is centrally hereditarily birational and the map
  $\RCent \R[Y]\to\RCent \R[X]$ is bijective.
\item[3)] $\SRR(\Cent
  )\to\SRR(\Cent Y)$, $f\mapsto f\circ\pi_{|\Cent Y}$ is an isomorphism.
\item[4)] The map $\pi_{|\Cent Y}:\Cent Y\to\Cent
  X$ is biregulous.
\item[5)] For all $g\in\R[Y]$ there exists $f\in\SRR(\Cent X)$ such that $f\circ\pi_{|\Cent Y}=g$ on $\Cent Y$.
\end{enumerate}
\end{thm}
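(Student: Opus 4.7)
The plan is to establish the cycle $1 \Leftrightarrow 2 \Rightarrow 4 \Rightarrow 3 \Rightarrow 5 \Rightarrow 1$. The equivalence $1 \Leftrightarrow 2$ is immediate from Corollary \ref{corheredbirat=equiresid} since $\pi$ is finite, and the implication $3 \Rightarrow 5$ is immediate as well: every $g \in \R[Y]$ already lies in $\SRR(\Cent Y)$, so by the isomorphism in 3) it has the form $f \circ \pi_{|\Cent Y}$ for a unique $f \in \SRR(\Cent X)$.

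For $2 \Rightarrow 4$, the bijection $\RCent \R[Y] \to \RCent \R[X]$ restricts to a bijection $\CentMax \R[Y] \to \CentMax \R[X]$ by Proposition \ref{lying-over}, giving a continuous bijection $\pi_{|\Cent Y} : \Cent Y \to \Cent X$. Since $\pi$ is finite, this restriction is proper for the Euclidean topology, hence a homeomorphism. Birationality of $\pi$ provides a rational inverse on a Zariski-dense open subset of $X$, and the centrally hereditarily birational condition of 2), via Proposition \ref{heredbirat=equiresid}, ensures that the restriction of this inverse to every irreducible central subvariety of $X$ is rational; each of its components therefore belongs to $\SRR(\Cent X)$ and $\pi_{|\Cent Y}$ is biregulous. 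The reverse implication $4 \Rightarrow 3$ is formal: pullback along the biregular map and its inverse yields mutually inverse ring isomorphisms between $\SRR(\Cent X)$ and $\SRR(\Cent Y)$.

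The main obstacle is $5 \Rightarrow 1$. Integrality is built into the hypothesis, and Proposition \ref{lying-over} provides surjectivity of $\RCent \R[Y] \to \RCent \R[X]$. For injectivity, suppose $\q_1, \q_2 \in \RCent \R[Y]$ both contract to $\p \in \RCent \R[X]$, and take any $g \in \q_1$; hypothesis 5) gives $f \in \SRR(\Cent X)$ with $g = f \circ \pi_{|\Cent Y}$. Then $f$ vanishes on $\pi(\V(\q_1) \cap \Cent Y)$, whose Zariski closure equals $\V(\p)$ by finiteness of $\pi$ together with Proposition \ref{equivcentralprime}. By the definition of regulosity, $f|_{\V(\p) \cap \Cent X}$ represents a rational function on $\V(\p)$, and its vanishing on a Zariski-dense subset forces it to vanish identically on $\V(\p) \cap \Cent X$; hence $g = f \circ \pi_{|\Cent Y}$ vanishes on $\V(\q_2) \cap \Cent Y$, so $g \in \q_2$ by the Central Nullstellensatz, and by symmetry $\q_1 = \q_2$. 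For equiresiduality, given $\p \in \RCent \R[X]$ with unique $\q \in \RCent \R[Y]$ above it, any class of $g \in \R[Y]$ in $k(\q)$ lifts via 5) to $f \in \SRR(\Cent X)$; regulosity produces an element of $k(\p)$ representing $f|_{\V(\p) \cap \Cent X}$ whose image in $k(\q)$ is the class of $g$, so $k(\p) \to k(\q)$ is surjective and hence an isomorphism.
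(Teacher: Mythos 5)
The paper does not actually prove Theorem~\ref{scgeom}: it is imported verbatim from \cite[Lem.~3.13, Thm.~3.16]{FMQ-futur2}, so there is no in-paper argument to compare yours against. That said, your blind proof is a reasonable reconstruction and the overall cycle $1 \Leftrightarrow 2 \Rightarrow 4 \Rightarrow 3 \Rightarrow 5 \Rightarrow 1$ is logically sound, and the heavy lifting in $5 \Rightarrow 1$ (using centrality to get Zariski density of $\pi(\Z(\q_1)\cap\Cent Y)$ in $\Z(\p)$, then the hereditary-rationality clause of regulosity to propagate vanishing, then the Central Nullstellensatz to conclude $\q_1 \subset \q_2$) is the right mechanism and is correctly executed.

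Two steps deserve more care than you give them. First, in $2 \Rightarrow 4$, asserting that the regulous inverse exists needs you to check, for each nonmaximal $\p \in \RCent\R[X]$ with unique central $\q$ above it, that the map $(\pi_{|\Cent Y})^{-1}$ actually carries $\V(\p)\cap\Cent X$ into $\V(\q)\cap\Cent Y$; this is not automatic from the bijection on $\RCent$, because a maximal central ideal containing $\p$ need not have its unique central lift containing $\q$. It is true, but it requires a going-up-type argument together with Proposition~\ref{contractcentral}, and you should make it explicit since it is exactly where the hypotheses in 2) are used. Second, $4 \Rightarrow 3$ is not purely ``formal'': that the pullback of a regulous function along a biregulous map is again regulous is a statement about the stability of the class $\SRR$ under composition with regulous bijections, and in particular the hereditary-rationality condition must be re-verified after pullback. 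This is a known fact (it underlies the very definition of ``biregulous'' in \cite{FMQ-futur2}), but stating it as immediate hides the one nontrivial check. Neither issue is a fatal gap, but both are load-bearing and should be spelled out rather than waved through.
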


\begin{rem}
All these equivalent properties are trivially satisfied if $\Cent X=\emptyset$ (Remark \ref{triviallyempty}).
\end{rem}

\subsection{Classical algebraic seminormalization}

We recall in this section the principal result obtained by Traverso \cite{T} concerning the seminormality of a ring in another one.

\begin{defn}
\label{intermedring}
A ring $C$ is said intermediate between the rings $A$ and $B$ if there exists a sequence of extensions $A\to C\to B$. In that case, we say that $A\to C$ and $C\to B$ are intermediate extensions of $A\to B$ and we say moreover that $A\to C$ is a subextension of $A\to B$.
\end{defn}

Seminormal extensions are maximal subintegral extensions.
\begin{defn}
\label{defseminorm}
Let $A\rightarrow C\to B$ be a sequence of two extensions of rings.
We say that $C$ is seminormal between $A$ and $B$ 
  if $A\to C$ is subintegral
  and moreover if for every
  intermediate domain
  $D$ between $C$ and $B$ with $C\subsetneq D$ then $A\to D$ is not
  subintegral. We say that $A$ is seminormal in $B$ if $A$ is
  seminormal between $A$ and $B$.
\end{defn}

\begin{defn}
Let $A$ be a ring and let $I$ be an ideal of $A$.
The Jacobson radical of $A$, denoted by $\JRad(A)$, is the
intersection of the maximal ideals of $A$.
\end{defn}

For a given extension of rings $A\to B$, Traverso (see \cite{T} or
\cite{V}) proved there exists a unique intermediate ring which is seminormal between $A$ and $B$.
\begin{thm} \label{existseminorm}
Let $A\to B$ be an extension of rings. There exists a unique ring between $A$
and $B$ which is seminormal between $A$ and $B$, this ring is denoted
by $A_B^*$, it is called the seminormalization of $A$ in $B$ and
moreover we have $$A_B^*=\{b\in A_B'|\,\,\forall\p\in\Sp
A,\,\,b_{\p}\in A_{\p}+\JRad((A_B')_{\p})\}.$$
\end{thm}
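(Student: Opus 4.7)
The plan is to take the explicit formula on the right-hand side as a definition of a candidate $T \subset B$, verify that $T$ is a subring lying between $A$ and $B$, show that $A \to T$ is subintegral, and show that $T$ contains every intermediate ring $D$ with $A \to D$ subintegral. Uniqueness of $A_B^*$ then follows immediately from this maximality.

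First I would reduce to the integral case. Since any subintegral extension is by definition integral, every candidate $C$ for $A_B^*$ must satisfy $A \subset C \subset A_B'$, so I may replace $B$ by $A_B'$ and assume that $B$ is integral over $A$. Set
$$T = \{b \in B \mid \forall \p \in \Sp A,\,\, b_{\p} \in A_{\p} + \JRad(B_{\p})\}.$$
Since $\JRad(B_{\p})$ is an ideal of $B_{\p}$ and $A_{\p} \cdot \JRad(B_{\p}) \subset \JRad(B_{\p})$, if $b_1,b_2 \in T$ with $b_{i,\p} = a_i + r_i$ ($a_i \in A_{\p}$, $r_i \in \JRad(B_{\p})$), then $b_1 b_2$ and $b_1 + b_2$ localized at $\p$ remain in $A_{\p} + \JRad(B_{\p})$. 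Hence $T$ is a subring of $B$ containing $A$.

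Next I would prove that $A \to T$ is subintegral. Surjectivity of $\Sp T \to \Sp A$ follows from the lying-over property of Proposition \ref{lying-over}, since $T$ is integral over $A$. Fix $\p \in \Sp A$. The maximal ideals of $B_{\p}$ correspond to the primes of $B$ contracting to exactly $\p$ (because $B_{\p}$ is integral over the local ring $A_{\p}$, so the going-up/incomparability arguments identify maximal ideals of $B_{\p}$ with primes of $B$ over $\p$); call these $\mathfrak{m}_i B_{\p}$. For any $b \in T$ with $b_{\p} = a + r$, $r \in \JRad(B_{\p}) \subset \mathfrak{m}_i B_{\p}$, so the image of $b$ in $B_{\p}/\mathfrak{m}_i B_{\p}$ equals the image of $a \in A_{\p}$. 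Contracting to $T$ shows first that all the primes $\mathfrak{m}_i \cap T$ coincide (they consist of those $b \in T$ whose associated $a \in A_{\p}$ lies in $\p A_{\p}$), and second that the residue field extension $k(\p) \to k(\mathfrak{m}_i \cap T)$ is an isomorphism. This is exactly the bijectivity and equiresiduality of $\Sp T \to \Sp A$.

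For maximality, let $A \subset D \subset B$ with $A \to D$ subintegral, let $d \in D$ and $\p \in \Sp A$, and let $\q$ be the unique prime of $D$ above $\p$. Equiresiduality produces $a \in A$ with $d - a \in \q$. For any maximal ideal $\mathfrak{m}$ of $B_{\p}$, corresponding to a prime $\mathfrak{q}_B$ of $B$ above $\p$, the prime $\mathfrak{q}_B \cap D$ of $D$ lies over $\p$, hence equals $\q$ by uniqueness, so $d - a \in \q \subset \mathfrak{q}_B$, giving $d_{\p} - a \in \mathfrak{m}$. As this holds for every maximal $\mathfrak{m}$ of $B_{\p}$, we conclude $d_{\p} \in A_{\p} + \JRad(B_{\p})$, hence $d \in T$. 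Uniqueness of $A_B^*$ follows: any ring seminormal between $A$ and $B$ is subintegral over $A$ hence contained in $T$, and must equal $T$ because $A \to T$ is itself subintegral.

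The main obstacle is showing that $A \to T$ is subintegral, specifically that the localization-plus-Jacobson-radical condition defining $T$ is exactly strong enough to collapse the fiber of $\Sp B \to \Sp A$ over each $\p$ to a single point with residue field $k(\p)$ after restriction to $T$. Once the identification between maximal ideals of $B_{\p}$ and primes of $B$ over $\p$ is in hand, the maximality argument is essentially automatic from the uniqueness of the prime of $D$ over $\p$ combined with equiresiduality.
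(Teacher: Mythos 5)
Your proposal is correct and follows the standard Traverso-style gluing argument; the paper itself cites Traverso and Vitulli for this theorem rather than reproving it, but your argument closely parallels the paper's own treatment of the central analogue (Theorem~\ref{PUgluingAB} together with the proof of Theorem~\ref{pb1}). Two small points of precision are worth tightening. In the maximality step, equiresiduality identifies $k(\q)$ with $k(\p)$, the fraction field of $A/\p$, so the element it produces lies in $A_\p$ rather than in $A$: one gets $\alpha\in A_\p$ with $d_\p-\alpha\in\q D_\p$, and the conclusion $d_\p\in A_\p+\JRad(B_\p)$ still follows because $\q D_\p=(\q_B\cap D)D_\p\subset\q_B B_\p$ for every prime $\q_B$ of $B$ over $\p$. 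Likewise, your description of $\q_i\cap T$ as the set of $b\in T$ whose ``associated $a$'' lies in $\p A_\p$ tacitly uses that this condition is independent of the chosen decomposition $b_\p=a+r$; that holds because $\JRad(B_\p)\cap A_\p=\p A_\p$ (each maximal ideal of $B_\p$ contracts to $\p A_\p$ by integrality), and this should be said explicitly since it is also what makes the subsequent residue-field computation unambiguous.
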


Remark that to build $A_B^*=\{b\in B|\,\,\forall\p\in\Sp
A,\,\,b_{\p}\in A_{\p}+\JRad((A_B')_{\p})\}$ then, for all $\p\in\Sp A$, we glue together all the
prime ideals of $A_B'$ lying over $\p$.

\subsection{Introduction to the Central algebraic Seminormalization Existence Problem}

Intermediate extensions of a centrally subintegral extension are still centrally subintegral extensions:
\begin{lem}
  \label{propcentsubint}
Let $A\to C\to B$ be a sequence of extensions of 
domains. Then $A\to B$ is $s_c$-subintegral if and only if $A\to C$
and $C\to B$ are both $s_c$-subintegral.
\end{lem}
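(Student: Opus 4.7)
The forward implication is routine. Integrality is stable under composition; the map $\RCent B\to\RCent A$ factors through $\RCent C$ (each factor is well defined by Proposition \ref{contractcentral}); a composition of two bijective equiresidual maps is again bijective and equiresidual. So the real content is the converse.

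For the converse, assume $A\to B$ is $s_c$-subintegral. If $A$ is not a real domain, then $\RCent A=\RCent C=\RCent B=\emptyset$ by Proposition \ref{existcentralideal}, so the bijectivity/equiresiduality conditions are vacuous and both intermediate extensions are trivially $s_c$-subintegral by Remark \ref{triviallyempty} (integrality of $A\to C$ and of $C\to B$ is automatic: every element of $C\subset B$ is integral over $A$, and every element of $B$ is a fortiori integral over $C$). Otherwise $A$ is real, and Remark \ref{remscbirat} yields that $A\to B$ is birational; since $A\subset C\subset B$ share the same fraction field, the extensions $A\to C$ and $C\to B$ are both integral and birational extensions of domains.

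It remains to check bijectivity and equiresiduality of $\RCent C\to\RCent A$ and $\RCent B\to\RCent C$. Since their composition $\RCent B\to\RCent A$ is bijective, formally $\RCent B\to\RCent C$ is injective and $\RCent C\to\RCent A$ is surjective. The complementary halves come from the lying-over property for integral birational extensions (Proposition \ref{lying-over}.3)), applied to both $A\to C$ and $C\to B$: both contraction maps are surjective. For the remaining injectivity of $\RCent C\to\RCent A$, suppose $\q_1,\q_2\in\RCent C$ both contract to $\p\in\RCent A$; using surjectivity of $\RCent B\to\RCent C$ pick $\ir_i\in\RCent B$ with $\ir_i\cap C=\q_i$, so $\ir_i\cap A=\p$; injectivity of $\RCent B\to\RCent A$ forces $\ir_1=\ir_2$, hence $\q_1=\q_2$. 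Equiresiduality is then immediate: for $\p\in\RCent A$ with preimages $\q\in\RCent C$ and $\ir\in\RCent B$, the tower of residue field injections $k(\p)\hookrightarrow k(\q)\hookrightarrow k(\ir)$ composes to an isomorphism by hypothesis, so each arrow is itself an isomorphism.

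The principal obstacle is the bijectivity step for the two intermediate contraction maps, since a priori one only obtains half of each bijection from the categorical splitting of a bijective composition; the missing halves require invoking the lying-over theorem for integral birational extensions, which in turn is what makes the reduction to the real (and therefore birational) case essential.
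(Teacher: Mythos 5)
Your proof is correct and follows essentially the same route as the paper's: integrality and contraction are transitive (easy direction), and for the nontrivial direction you split off the non-real case, use Remark \ref{remscbirat} to get birationality, invoke lying-over (Proposition \ref{lying-over}) for surjectivity of both intermediate contraction maps, and conclude bijectivity from the bijective composition and equiresiduality from the residue-field tower. The only difference is cosmetic: where the paper simply notes that two surjections composing to a bijection are each bijective, you spell out the lifting argument for injectivity of $\RCent C\to\RCent A$.
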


\begin{proof}
Assume $A\to B$ is $s_c$-subintegral. Clearly, $A\to C$ and $B\to C$
are both integral extensions. 
It follows that $\RCent B\to
\RCent A$ is bijective and equiresidual. If $A$ is not a real domain then it follows from Remark \ref{triviallyempty} that $A\to C$ and $C\to B$ are trivially $s_c$-subintegral.
Assume now $A$ is a real domain. By equiresiduality (Remark \ref{remscbirat}) then $A\to
B$ is birational and thus $A\to C$ and $C\to B$ are also both
birational. By Proposition \ref{lying-over} the maps $\RCent B\to
\RCent C$ and $\RCent C\to\RCent A$ are surjective, and since the
composition is bijective then they are both bijective. Let
$\q\in\RCent B$ then we have the following sequence of extensions of
residue fields $k(\q\cap A)\to k(\q\cap C)\to k(\q)$, it shows that $\RCent B\to
\RCent C$ and $\RCent C\to\RCent A$ are both equiresidual.

The converse implication is clear.
\end{proof}

By Lemma \ref{propcentsubint} a subextension of a centrally
subintegral extension is still centrally subintegral, so we may consider
maximal centrally subintegral subextensions.
\begin{defn}
\label{defscnorm}
Let $A\rightarrow C\to B$ be a sequence of two extensions of
domains.
We say that $C$ is centrally seminormal (or $s_c$-normal for short) between $A$ and $B$ 
  if $A\to C$ is $s_c$-subintegral
  and moreover if for every
  intermediate domain
  $C'$ between $C$ and $B$ with $C\not= C'$ then $A\to C'$ is not
  $s_c$-subintegral. We say that $A$ is $s_c$-normal in $B$ if $A$ is
  $s_c$-normal between $A$ and $B$.
\end{defn}

From Lemma \ref{propcentsubint}, we get an equivalent definition of a
centrally seminormal ring (between $A$ and $B$):
\begin{prop}
  \label{defequivscnorm}
Let $A\rightarrow C\to B$ be a sequence of two extensions of
domains. Then, $C$ is
$s_c$-normal between $A$ and $B$ if and only $A\to C$ is $s_c$-subintegral and $C$ is
$s_c$-normal in $B$.
\end{prop}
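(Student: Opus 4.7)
The plan is to derive the equivalence directly from Lemma \ref{propcentsubint}, the transitivity-style statement that $A\to B$ is $s_c$-subintegral if and only if both $A\to C$ and $C\to B$ are. Both notions of $s_c$-normality in play begin by requiring that their ``base'' extension is $s_c$-subintegral: $A\to C$ in Definition \ref{defscnorm}, and $C\to C$ (trivially) when $C$ is $s_c$-normal in $B$. The only real content of the equivalence therefore lies in the maximality clauses, which say respectively that no proper intermediate domain $C\subsetneq C'\subseteq B$ gives an $s_c$-subintegral extension $A\to C'$, versus no such $C'$ gives an $s_c$-subintegral extension $C\to C'$. I would show these two maximality clauses are equivalent under the hypothesis that $A\to C$ is $s_c$-subintegral.

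For the forward direction, I would assume $C$ is $s_c$-normal between $A$ and $B$. Then $A\to C$ is $s_c$-subintegral by definition. To verify that $C$ is $s_c$-normal in $B$, suppose toward contradiction that there exists an intermediate domain $C\subsetneq C'\subseteq B$ such that $C\to C'$ is $s_c$-subintegral. Then applying Lemma \ref{propcentsubint} to the decomposition $A\to C\to C'$, both factors being $s_c$-subintegral, yields that $A\to C'$ is $s_c$-subintegral, contradicting the maximality clause in the definition of $s_c$-normality of $C$ between $A$ and $B$.

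For the converse, I would assume $A\to C$ is $s_c$-subintegral and that $C$ is $s_c$-normal in $B$. The first condition of Definition \ref{defscnorm} is immediate. For the maximality clause, suppose $C\subsetneq C'\subseteq B$ is an intermediate domain with $A\to C'$ being $s_c$-subintegral. Then the ``only if'' direction of Lemma \ref{propcentsubint}, applied to the decomposition $A\to C\to C'$, forces $C\to C'$ to be $s_c$-subintegral, contradicting the $s_c$-normality of $C$ in $B$.

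I do not expect any real obstacle: Lemma \ref{propcentsubint} is the only substantive input, and both implications amount to invoking it in opposite directions. The minor bookkeeping point to remember is simply that every intermediate domain between $C$ and $B$ is also intermediate between $A$ and $B$, so that the families of subextensions appearing in the two maximality conditions coincide and the logical transfer via Lemma \ref{propcentsubint} goes through verbatim.
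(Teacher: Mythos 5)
Your proposal is correct and takes exactly the route the paper intends: the paper gives no separate proof of Proposition \ref{defequivscnorm} beyond the remark that it follows from Lemma \ref{propcentsubint}, and your two-directional argument spells out precisely how the two maximality clauses transfer via that lemma.
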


From Definition \ref{defscnorm} we easily deduce the following
property:
\begin{prop}
  \label{intermednorm}
Let $A\rightarrow C\to B$ be a sequence of two extensions of
domains. If $A$ is $s_c$-normal in $B$ then $A$ is $s_c$-normal in
$C$.
\end{prop}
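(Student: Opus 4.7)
The plan is to unwind the definitions directly; there is essentially nothing to prove beyond the observation that the intermediate rings between $A$ and $C$ form a subclass of the intermediate rings between $A$ and $B$.

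First, I would note that saying ``$A$ is $s_c$-normal in $B$'' means, by Definition \ref{defscnorm} applied with $C$ replaced by $A$, that $A\to A$ is $s_c$-subintegral (which is automatic since the central spectrum map is the identity and hence bijective and equiresidual) and that no proper intermediate domain $D$ with $A\subsetneq D\subseteq B$ makes $A\to D$ an $s_c$-subintegral extension. The goal is to verify the same two conditions with $B$ replaced by $C$.

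The first condition is again trivial. For the maximality condition, I would let $D$ be any intermediate domain with $A\subsetneq D\subseteq C$ and observe that, since the given sequence $A\to C\to B$ composes to $A\to D\to C\to B$, the domain $D$ is also intermediate between $A$ and $B$ in the sense of Definition \ref{intermedring}. By the $s_c$-normality of $A$ in $B$, the extension $A\to D$ is not $s_c$-subintegral. This is exactly what is needed to conclude that $A$ is $s_c$-normal in $C$.

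Since the argument reduces to a single containment of classes of intermediate rings, there is no genuine obstacle; the content lies entirely in correctly parsing Definition \ref{defscnorm} (note the asymmetry: $s_c$-normality of $A$ in $B$ restricts maximality to extensions sitting inside $B$, so restricting to the sub-range $A\to C$ only makes the condition easier, not harder).
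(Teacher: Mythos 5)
Your proof is correct and is exactly the definition-unwinding the paper has in mind; the paper does not even write out a proof, simply saying the proposition follows easily from Definition \ref{defscnorm}. The key observation, that every intermediate domain between $A$ and $C$ is also intermediate between $A$ and $B$, is precisely the point.
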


\begin{defn}
Let $A$ be a ring and let $I$ be an ideal of $A$.
\begin{enumerate}
\item The real Jacobson radical of $A$, denoted by $\JRadRe(A)$, is the
intersection of the maximal and real ideals of $A$.

\item The central Jacobson radical of $A$, denoted by $\JRadCe(A)$, is the
  intersection of the maximal and central ideals of $A$.
\end{enumerate}
\end{defn}

In view to the classical case (see the previous section), we state the following problem:\\
Given an extension $A\to B$ of domains, is there a 
unique intermediate domain $C$ which is $s_c$-normal between $A$ and $B$?\\
\\
We define the central seminormalization (or $s_c$-normalization) of
$A$ in $B$ as the
ring which would give a solution to this problem. In the classical case, the problem is solved by Theorem \ref{existseminorm}.
\begin{defn}
  \label{defscnormal}
Let $A\to B$ be an extension of domains. In case there exists a unique maximal element among the
intermediate domains $C$ between $A$ and $B$ such that $A\to C$ is
$s_c$-subintegral then we denote it by $A^{s_c,*}_B$ and we call it the
central seminormalization or
$s_c$-normalization of $A$ in $B$. In case $B=A'$ then we omit $B$ and
we call $A^{s_c,*}$ the $s_c$-normalization of $A$.
\end{defn}

The existence of a central seminormalization is already proved in \cite{FMQ-futur2} in the special case $B=A'$.

\subsection{Central gluing over a ring}

In view of the classical case (see Theorem \ref{existseminorm}), a candidate to be the
$s_c$-normalization of $A$ in $B$ when $A\to B$ is integral is the following ring. 
\begin{defn}
  \label{defgluingAB}
Let $A\to B$ be an integral extension of domains. The ring $$A_B^{s_c}=\{b\in B|\,\,\forall\p\in\RCent
A,\,\,b_{\p}\in A_{\p}+\JRadCe(B_{\p})\}$$ is called the central gluing
of $B$ over $A$.
\end{defn}

The central gluing is not the $s_c$-normalization.
\begin{ex}\label{cex}
Consider the finite extension $A=\R[x]\to\R[x,y]/(y^2+x^2+1)=B$. Then $A_B^{s_c}=B$ since $\RCent B=\emptyset$ and $A\to B$ is not centrally subintegral.
\end{ex}

In the following, if $\ir$ is a prime
ideal of a ring $C$, we denote by $c(\ir)$ the class of $c\in C$ in
$k(\ir)$.

The central gluing satisfies the following universal property again related to the notion of "radiciel" morphism of schemes introduced by Grothendieck \cite[I Def. 3.7.2]{Gr1}:
\begin{thm}
  \label{PUgluingAB}
Let $A\to B$ be an integral extension of domains. The
central gluing $A^{s_c}_B$ of $B$ over $A$ is the biggest 
intermediate ring $C$ between $A$ and $B$ 
satisfying the following
properties:
\begin{enumerate}
\item If $\q_1,\q_2\in\RCent B$ ly over $\p\in\RCent A$ then $\q_1\cap
  C=\q_2\cap C$.
\item If $\q\in\RCent B$ then the residue fields extension $k(\q\cap
  A)\to k(\q\cap C)$ is an isomorphism.
\end{enumerate}
\end{thm}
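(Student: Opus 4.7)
The plan is to prove three claims: (a) $A^{s_c}_B$ is an intermediate ring between $A$ and $B$; (b) $A^{s_c}_B$ satisfies conditions (1) and (2); (c) any intermediate ring $C$ between $A$ and $B$ satisfying (1) and (2) is contained in $A^{s_c}_B$. All three reduce to local analysis of the integral extension $A_\p \to B_\p$ at each $\p \in \RCent A$.

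The key preliminary, which will be the main technical obstacle, is the identification
$$\CentMax(B_\p) = \{\q B_\p : \q \in \RCent B,\ \q\cap A = \p\}.$$
By integrality of $A_\p \to B_\p$ together with going-up, maximal ideals of $B_\p$ correspond via extension to primes of $B$ lying over $\p$. Centrality transfers between $\q$ and $\q B_\p$ by clearing denominators: given $\alpha^2 + \beta \in \q B_\p$ with $\alpha = a/t \in B_\p$ and $\beta \in \sum \K(B)^2$, multiplying by appropriate elements of $A\setminus\p$ produces an identity of the form $(au)^2 + (ut)^2\beta \in \q$ with $(ut)^2\beta \in \sum\K(B)^2$, and centrality of $\q$ forces $au \in \q$, whence $\alpha \in \q B_\p$; the converse direction is analogous. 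Consequently, with the empty-intersection convention,
$$\JRadCe(B_\p) = \bigcap_{\q \in \RCent B,\,\q\cap A = \p} \q B_\p.$$

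Claim (a) is immediate since $A_\p + \JRadCe(B_\p)$ is an $A_\p$-subalgebra of $B_\p$. For (b), take $b \in A^{s_c}_B$ and at each $\p \in \RCent A$ write $b_\p = a/s + j$ with $a/s \in A_\p$ and $j \in \JRadCe(B_\p)$. For (1): if $b \in \q_1$, then $a/s = b_\p - j \in \q_1 B_\p \cap A_\p = \p A_\p \subset \q_2 B_\p$, so $b_\p \in \q_2 B_\p$ and $b \in \q_2$. For (2): for any central $\q$ over $\p$ we have $b - a/s \in \q B_\p \cap B = \q$, so $b$ and $a/s$ share an image in $k(\q)$; since $a/s$ represents an element of $k(\p)$, the image of $A^{s_c}_B$ in $k(\q)$ lies in $k(\p)$, forcing $k(\p) \to k(\q\cap A^{s_c}_B)$ to be an isomorphism.

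For (c), let $C$ be intermediate satisfying (1) and (2), take $c \in C$ and $\p \in \RCent A$. If no central prime of $B$ lies over $\p$, then $\JRadCe(B_\p) = B_\p$ and the condition at $\p$ is automatic. Otherwise (1) furnishes a unique ideal $\ir = \q \cap C$ common to all central primes $\q$ of $B$ over $\p$ (with $\ir \cap A = \p$), and (2) provides $a/s \in A_\p$ with $c_\p - a/s \in \ir C_\p \subset \ir B_\p$. Since $\ir \subset \q$ for every such $\q$, the preliminary identification yields $\ir B_\p \subset \JRadCe(B_\p)$; hence $c_\p \in A_\p + \JRadCe(B_\p)$, so $c \in A^{s_c}_B$.
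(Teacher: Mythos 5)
Your proposal is correct and follows essentially the same strategy as the paper's proof: localize at each $\p\in\RCent A$, identify $\JRadCe(B_\p)$ with $\bigcap \q B_\p$ over central primes $\q$ of $B$ lying over $\p$, use this to verify (1) and (2) for $A^{s_c}_B$, and then for an arbitrary $C$ satisfying (1) and (2) use property (2) to write $c-a/s \in \ir C_\p \subset \JRadCe(B_\p)$. The only difference is cosmetic: you spell out the clearing-of-denominators argument showing that centrality of $\q$ is equivalent to centrality of $\q B_\p$, which the paper invokes implicitly.
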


\begin{proof}
We first prove that $A^{s_c}_B$ satisfies (1) and (2). Let
$\p\in\RCent A$ and let $\q_1,\q_2\in\RCent B$ lying over $\p$. Since
$\q_1B_{\p}$ and $\q_2B_{\p}$ are two maximal and central ideals of
$B_{\p}$ then, by
definition of $A^{s_c}_B$, we get $\q_1\cap
A^{s_c}_B=\q_2\cap A^{s_c}_B=(\p A_{\p}+\JRadCe(B_{\p}))\cap A^{s_c}_B$. Since
$k(\p)=A_{\p}/\p A_{\p}= (A^{s_c}_B)_{\p}/((\p A_{\p}+\JRadCe(B_{\p}))\cap
A^{s_c}_B)_{\p}$ then the first part of the proof is done.

To end the proof, it is sufficient to show that if $C$ is intermediate
between $A$ and $B$ and satisfies (1) and (2) 
then $C\subset A^{s_c}_B$. We have to show that if
$\p\in\RCent A$ then $C\subset (A_{\p}+\JRadCe(B_{\p}))$. If there is
no central prime ideal of $B$ lying over $\p$ then $C\subset
A_{\p}+\JRadCe(B_{\p})=B_{\p}$. Assume now there is at least one central
prime ideal of $B$, say $\q$, lying over $\p$. Since $C$ satisfies (1) then $\q\cap C$ is the unique central prime of $C$ lying
over $\p$ that is
the contraction of a central prime ideal of $B$. It follows that $(\q\cap
C)B_{\p}\subset \JRadCe(B_{\p})$. We use the following commutative diagram
\[
\begin{array}{ccc}
k(\p) & \simeq & k(\q\cap C)  \\
\uparrow &&\uparrow\\
A_{\p} &\rightarrow & C_{\p}
\end{array}
\]\\
Let $c\in C$. By (2) there exist $a\in A$ and $s\in A\setminus \p$ such that
$(a/s)(\q\cap C)=c(\q\cap C)$. Hence $a-sc\in\q\cap C$ and thus $c-a/s\in
(\q\cap C)C_{\p}=\ker (C_{\p}\to k(\q\cap C))$. We get $c\in A_{\p}+\JRadCe{B_{\p}}$.
This concludes the proof.
\end{proof}

For integral extensions, the central gluing contains every centrally subintegral subextensions.
\begin{cor}
  \label{inclnormglue}
Let $A\to C\to B$ be a sequence of integral extensions of
domains. If $A\to C$ is $s_c$-subintegral then $$C\subset A^{s_c}_B.$$
\end{cor}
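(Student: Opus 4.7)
The plan is to derive this as a direct consequence of the universal property of the central gluing given in Theorem \ref{PUgluingAB}. Concretely, it suffices to verify that the intermediate ring $C$ satisfies the two characterizing conditions (1) and (2) of that theorem; then maximality yields $C\subset A^{s_c}_B$.

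For condition (1), I would take $\q_1,\q_2\in\RCent B$ lying over a common $\p\in\RCent A$. By Proposition \ref{contractcentral} applied to the extension $C\to B$, the contractions $\q_1\cap C$ and $\q_2\cap C$ both belong to $\RCent C$, and both lie over $\p$ since contracting to $A$ factors through $C$. Since $A\to C$ is $s_c$-subintegral, the associated map $\RCent C\to\RCent A$ is bijective, so there is a unique central prime of $C$ lying over $\p$. Hence $\q_1\cap C=\q_2\cap C$.

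For condition (2), let $\q\in\RCent B$. Again by Proposition \ref{contractcentral}, $\q\cap C\in\RCent C$, and it lies over $\q\cap A\in\RCent A$. The equiresiduality of $\RCent C\to\RCent A$ built into the $s_c$-subintegrality of $A\to C$ means precisely that the induced residue-field extension $k(\q\cap A)\to k(\q\cap C)$ is an isomorphism, which is exactly what (2) requires.

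There is no real obstacle here beyond bookkeeping: the content of the statement is already packaged into the definition of $s_c$-subintegrality (bijectivity and equiresiduality on central spectra) together with the stability of central primes under contraction (Proposition \ref{contractcentral}). Once (1) and (2) are verified, the maximality clause of Theorem \ref{PUgluingAB} directly gives the inclusion $C\subset A^{s_c}_B$, completing the proof.
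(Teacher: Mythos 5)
Your proof is correct and takes the same approach as the paper: verify that $C$ satisfies conditions (1) and (2) of Theorem \ref{PUgluingAB} and then invoke its maximality clause. You have merely filled in the routine details (contraction of central primes via Proposition \ref{contractcentral}, plus bijectivity and equiresiduality of $\RCent C\to\RCent A$) that the paper dismisses as ``easy to see.''
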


\begin{proof}
If $A\to C$ is $s_c$-subintegral then it is easy to see that $C$
satisfies the properties (1) and (2) of Theorem \ref{PUgluingAB}. We
conclude by Theorem \ref{PUgluingAB}.
\end{proof}

\subsection{Birational closure}

\subsubsection{Definition}

\begin{defn} \label{fiberproduct}
Let $A\to B$ and $C\to B$ be two extensions of rings. The fibre product $A\times_B C$ is the ring defined by the following pull-back diagram
\[
\begin{array}{ccccc}
& A\times_B C & \to & A & \\
&\downarrow &&\downarrow & \\
& C &\to & B& \\
\end{array}
\]
\end{defn}

\begin{defn} \label{defbiratclos}
Let $A\to B$ be an extension of domains and let $\K(A)\to\K(B)$ be the associated extension of fields. We denote by $\widetilde{A}_B$ the fibre product $B\times_{\K(B)}\K(A)$ and we call it the birational closure of $A$ in $B$.
\end{defn}

The birational closure of $A$ in $B$ is the biggest intermediate ring between $A$ and $B$ which is birational with $A$.
\begin{prop} \label{PUbiratclos}
Let $A\to B$ be an extension of domains and let $\K(A)\to\K(B)$ be the associated extension of fields. Then, $\widetilde{A}_B$ is intermediate between $A$ and $B$ with $A\to \widetilde{A}_B$ birational and moreover if $C$ is an intermediate ring between $A$ and $B$ and $A\to C$ is birational then $C\to B$ factorizes uniquely through $\widetilde{A}_B$.
\end{prop}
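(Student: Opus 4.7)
The plan is to give a concrete description of the fibre product $\widetilde{A}_B=B\times_{\K(B)}\K(A)$ as an intersection inside $\K(B)$, and then verify the two assertions (being intermediate and birational, and the universal property) directly from this description.

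First, I would observe that since $A\to B$ is an extension of domains, the natural map $B\to\K(B)$ is injective and $\K(A)\to\K(B)$ is injective as well (an extension of domains induces an extension of fraction fields). Hence the fibre product $\widetilde{A}_B=B\times_{\K(B)}\K(A)$ can be identified with the intersection $B\cap\K(A)$ taken inside $\K(B)$, via the projection to the first factor. This identification turns the pull-back diagram into the statement that an element $b\in B$ belongs to $\widetilde{A}_B$ if and only if it lies in the image of $\K(A)$ inside $\K(B)$.

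From this description, the first assertion is immediate: the inclusions $A\subset B\cap\K(A)\subset B$ show that $\widetilde{A}_B$ is intermediate between $A$ and $B$. For birationality, the chain $A\subset\widetilde{A}_B\subset\K(A)$ (the right inclusion holding inside $\K(B)$) forces $\K(A)\subset\K(\widetilde{A}_B)\subset\K(A)$, so $\K(A)\simeq\K(\widetilde{A}_B)$ and $A\to\widetilde{A}_B$ is birational.

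For the universal property, suppose $C$ is intermediate between $A$ and $B$ with $A\to C$ birational. Then $\K(C)=\K(A)$ inside $\K(B)$, so every $c\in C$ lies simultaneously in $B$ (since $C\subset B$) and in $\K(A)$ (since $C\subset\K(C)=\K(A)$ inside $\K(B)$). This gives $C\subset B\cap\K(A)=\widetilde{A}_B$, and the inclusion provides the required factorization $C\to\widetilde{A}_B\to B$. Uniqueness of this factorization is automatic because $\widetilde{A}_B\to B$ is injective (it is the inclusion of a subring), so any two factorizations must agree.

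I do not anticipate any real obstacle. The only minor point to be careful about is justifying the identification of the fibre product with $B\cap\K(A)$, which relies on both maps $B\to\K(B)$ and $\K(A)\to\K(B)$ being injective; once that identification is made, both statements reduce to elementary verifications about subrings of $\K(B)$.
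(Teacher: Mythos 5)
Your proof is correct. Where the paper invokes the universal property of the fibre product twice (once with the square $A,B,\K(A),\K(B)$ to obtain the factorization through $\widetilde{A}_B$, and once with the square built from $C$ and $\K(C)=\K(A)$ to obtain the factorization of $C\to B$), you instead begin by identifying $\widetilde{A}_B$ concretely as the subring $B\cap\K(A)$ of $\K(B)$, using that both $B\to\K(B)$ and $\K(A)\to\K(B)$ are injective, and then reduce all the assertions to elementary checks on subrings of $\K(B)$. This is the same underlying content unpacked: the universal property of a pull-back of two injections into a common target is exactly the statement that the pull-back computes the intersection. Your version makes the birationality and the factorization transparently obvious (both become inclusions of subrings of $\K(B)$) and also makes uniqueness of the factorization explicit, which the paper simply asserts; the cost is that you must pause to justify the identification of the fibre product with the intersection, which the paper avoids by staying categorical.
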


\begin{proof}
The following commutative diagram \[
\begin{array}{ccccc}
& A & \to & B & \\
&\downarrow &&\downarrow & \\
& \K(A) &\to & \K(B)& \\
\end{array}
\]
gives a factorization of $A\to B$ through $\widetilde{A}_B$ by universal property of the fibre product. Since we have an extension $\widetilde{A}_B\to \K(A)$ then $\K(\widetilde{A}_B)=\K(A)$ and thus $A\to \widetilde{A}_B$ is birational.
Let $C$ be an intermediate domain between $A$ and $B$ such that $A\to C$ is birational. We get the following commutative diagram
\[
\begin{array}{cccccccc}
& A & \to & C& = & C &\to&B  \\
&\downarrow & &\downarrow & &&& \downarrow\\
& \K(A) &= & \K(C)&=& \K(A) & \to & \K(B) \\
\end{array}
\]
By universal property of the fibre product then the extension $C\to B$ factorizes uniquely through $\widetilde{A}_B$.
\end{proof}

\subsubsection{Integral and birational closure}

We prove that the operations "integral closure" and "birational closure" commute together.
\begin{prop} \label{commutclos}
Let $A\to B$ be an extension of domains. Then $$A_{\widetilde{A}_B}'=\widetilde{A}_{A_B '}$$
\end{prop}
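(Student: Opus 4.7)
The plan is to identify both sides with the same concrete subring of $B$, namely
\[
R = \{b\in B\,|\,b\text{ is integral over }A\text{ and the image of }b\text{ in }\K(B)\text{ lies in }\K(A)\},
\]
where $\K(A)$ is viewed as a subfield of $\K(B)$ via the injective field extension induced by $A\to B$. Once both sides are identified with $R$, the equality is immediate.

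First I would unwind Definition \ref{defbiratclos} as follows. Since $B\to\K(B)$ is injective (as $B$ is a domain), the pull-back $B\times_{\K(B)}\K(A)$ is naturally isomorphic to the subring of $B$ consisting of those elements whose image in $\K(B)$ lies in the subfield $\K(A)$. In particular $\widetilde{A}_B = \{b\in B\,|\,b\in\K(A)\subset\K(B)\}$, and then
\[
A'_{\widetilde{A}_B} = \{b\in\widetilde{A}_B\,|\,b\text{ integral over }A\} = R.
\]

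For the other side, note that the inclusion $A\to A_B'$ induces a field extension $\K(A)\to\K(A_B')$ which sits inside $\K(B)$, so $\K(A)\subset\K(A_B')\subset\K(B)$. By the same unwinding of the fibre product (using that $A_B'$ is a domain and $A_B'\to\K(A_B')$ is injective),
\[
\widetilde{A}_{A_B'} = A_B'\times_{\K(A_B')}\K(A) = \{b\in A_B'\,|\,b\in\K(A)\subset\K(A_B')\}.
\]
Since $A_B' = \{b\in B\,|\,b\text{ integral over }A\}$ and the embedding $\K(A)\subset\K(A_B')$ is compatible with the embedding $\K(A)\subset\K(B)$, this subring is again exactly $R$.

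The argument is essentially bookkeeping of the fibre product: the only point that requires a line of care is the compatibility of the three field embeddings $\K(A)\hookrightarrow\K(A_B')\hookrightarrow\K(B)$, so that ``lying in $\K(A)$'' has an unambiguous meaning in both descriptions. I do not expect any genuine obstacle — no finiteness, noetherianity, or reality hypothesis is needed, and the proof goes through for an arbitrary extension of domains.
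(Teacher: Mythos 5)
Your proof is correct, and it takes a genuinely different route from the paper's. The paper plays the two universal properties off against each other: it observes that $A_{\widetilde{A}_B}'$ is an integral and birational subring of $A_B'$ and uses Proposition~\ref{PUbiratclos} to conclude $A_{\widetilde{A}_B}'\subset\widetilde{A}_{A_B'}$, then symmetrically observes that $\widetilde{A}_{A_B'}$ is a birational and integral subring of $\widetilde{A}_B$ to get the reverse inclusion. You instead unwind both sides to the concrete subring $R=\{b\in B\mid b\text{ integral over }A,\ b\in\K(A)\subset\K(B)\}$, using that the fibre product over an injective map is just a preimage. Your approach is more elementary and makes the answer completely explicit; its only delicate point, which you flag, is that the embedding $\K(A)\hookrightarrow\K(A_B')\hookrightarrow\K(B)$ is compatible with $\K(A)\hookrightarrow\K(B)$, which is immediate since $A\subset A_B'\subset B$ and fraction fields are functorial for injections. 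The paper's approach has the virtue of working entirely at the level of universal properties and thus never needs to mention this compatibility, but both arguments are fully correct and of comparable length.
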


\begin{proof}
The extension $A\to A_{\widetilde{A}_B}'$ is integral so $A\to A_B '$ factorizes uniquely through $A_{\widetilde{A}_B}'$. Since $A\to A_{\widetilde{A}_B}'$ is also birational then $A\to \widetilde{A}_{A_B '}$
factorizes uniquely through $A_{\widetilde{A}_B}'$ by Proposition \ref{PUbiratclos}. Conversely, the extension $A\to \widetilde{A}_{A_B '}$ is birational so $A\to \widetilde{A}_B$ factorizes uniquely through 
$\widetilde{A}_{A_B '}$. Since $A\to \widetilde{A}_{A_B '}$ is also integral then $A\to A_{\widetilde{A}_B}'$ factorizes uniquely through $\widetilde{A}_{A_B '}$.
\end{proof}

\begin{defn} \label{defintbiratclos}
Let $A\to B$ be an extension of domains. We simply denote by $\widetilde{A}_B'$ the ring $A_{\widetilde{A}_B}'=\widetilde{A}_{A_B '}$ and we call it the integral and birational closure of $A$ in $B$.
\end{defn}

From above results we easily get an universal property for the integral and birational closure.
\begin{prop} \label{PUintbiratclos}
Let $A\to B$ be an extension of domains. Then, $\widetilde{A}_B'$ is intermediate between $A$ and $B$ with $A\to \widetilde{A}_B'$ integral and birational and moreover if $C$ is an intermediate ring between $A$ and $B$ and $A\to C$ is integral and birational then $C\to B$ factorizes uniquely through $\widetilde{A}_B'$.
\end{prop}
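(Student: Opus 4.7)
The plan is to combine the two universal properties already established: the identification $\widetilde{A}_B' = A'_{\widetilde{A}_B} = \widetilde{A}_{A'_B}$ from Proposition \ref{commutclos} lets the proof use whichever description is convenient.

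First I would establish the two properties of $\widetilde{A}_B'$ itself. From the description $\widetilde{A}_B' = A'_{\widetilde{A}_B}$, this ring is the integral closure of $A$ inside the subring $\widetilde{A}_B\subset B$, hence $A\to \widetilde{A}_B'$ is an integral subextension of $A\to B$. From the other description $\widetilde{A}_B' = \widetilde{A}_{A'_B}$, Proposition \ref{PUbiratclos} applied to the extension $A\to A'_B$ shows that $A\to \widetilde{A}_B'$ is birational. Thus $\widetilde{A}_B'$ is an intermediate domain between $A$ and $B$ and the extension $A\to \widetilde{A}_B'$ is both integral and birational.

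Next, for the universal property, let $C$ be an intermediate ring between $A$ and $B$ with $A\to C$ integral and birational. Since $A\to C$ is integral and $C$ sits inside $B$, every element of $C$ is integral over $A$ and lies in $B$, so $C\subset A'_B$. Thus $C$ is an intermediate ring between $A$ and $A'_B$ with $A\to C$ birational. Applying Proposition \ref{PUbiratclos} to the extension $A\to A'_B$, the inclusion $C\to A'_B$ factorizes uniquely through $\widetilde{A}_{A'_B}=\widetilde{A}_B'$, i.e.\ $C\subset \widetilde{A}_B'$. This gives a (unique) factorization of $C\to B$ through $\widetilde{A}_B'$.

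The uniqueness of the factorization is automatic: since $C$, $\widetilde{A}_B'$ and $B$ are all subrings of $B$ and the maps involved are the canonical inclusions, once the inclusion $C\subset \widetilde{A}_B'$ has been exhibited there is no choice in the factorization. There is no real obstacle here; the only conceptual point is to recognize that the two universal properties of Proposition \ref{PUbiratclos} and the definition of $A'_B$ can be composed thanks to Proposition \ref{commutclos}, which guarantees that taking the birational closure inside $A'_B$ yields the same ring as taking the integral closure inside $\widetilde{A}_B$.
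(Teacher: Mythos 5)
Your proof is correct and fills in exactly the details the paper leaves implicit (the paper states this proposition with only the remark ``From above results we easily get,'' relying on Proposition \ref{commutclos} and Proposition \ref{PUbiratclos} just as you do). Both the establishment of the two properties of $\widetilde{A}_B'$ via the two descriptions and the deduction $C\subset A_B'$ followed by an application of Proposition \ref{PUbiratclos} match the intended argument.
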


\subsection{Resolution of the Central Seminormalization Existence Problem for rings}

We prove the existence of the central seminormalisation of a ring in another one in the following theorem \ref{pb1}.
\begin{thm} 
  \label{pb1}
Let $A\to B$ be an extension of domains. The
central seminormalization $A^{s_c,*}_B$ of $A$ in $B$ exists and
moreover we have: 
\begin{enumerate}
\item[1)] If $A$ is a real domain then $$A^{s_c,*}_B=A^{s_c}_{\widetilde{A}_B'}=\{b\in \widetilde{A}_B'|\,\,\forall\p\in\RCent
A,\,\,b_{\p}\in A_{\p}+\JRadCe((\widetilde{A}_B')_{\p})\}$$
\item[2)] If $A$ is not a real domain then $$A^{s_c,*}_B=A^\prime_B=A^{s_c}_{A_B'}$$
\end{enumerate}
\end{thm}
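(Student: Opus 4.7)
The plan is to split on whether $A$ is a real domain. If $A$ is not real, Proposition~\ref{existcentralideal} gives $\RCent A=\emptyset$, and Proposition~\ref{contractcentral} then forces $\RCent C=\emptyset$ for every intermediate domain $C$ between $A$ and $B$. Hence by Remark~\ref{triviallyempty}, every integral extension $A\to C$ is trivially $s_c$-subintegral, so the largest integral intermediate domain, namely $A'_B$, is the required maximum; and the defining condition of the central gluing $A^{s_c}_{A'_B}$ is vacuous, giving $A^{s_c}_{A'_B}=A'_B$. Case 2) follows at once.

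For case 1), assume $A$ is real. The first step is to locate the only possible candidate. By Remark~\ref{remscbirat}, any $s_c$-subintegral intermediate extension $A\to C\to B$ is both integral and birational, hence by the universal property of the integral and birational closure (Proposition~\ref{PUintbiratclos}) we have $C\subset \widetilde{A}_B'$. Applying Corollary~\ref{inclnormglue} to the integral extension $A\to \widetilde{A}_B'$ we then obtain $C\subset D$, where $D:=A^{s_c}_{\widetilde{A}_B'}$. So $D$ is the only conceivable maximum, and the whole theorem reduces to showing that $A\to D$ is itself $s_c$-subintegral.

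This is the core step. Integrality is immediate since $D\subset \widetilde{A}_B'$ and $A\to \widetilde{A}_B'$ is integral. For the bijectivity of $\RCent D\to \RCent A$, surjectivity is obtained by composing the lying-over surjection $\RCent \widetilde{A}_B'\twoheadrightarrow \RCent A$ (Proposition~\ref{lying-over}(3), which applies because $A\to \widetilde{A}_B'$ is integral and birational) with the contraction map $\RCent \widetilde{A}_B'\to \RCent D$; for injectivity, any two primes $\q_1,\q_2\in \RCent D$ lying over $\p\in\RCent A$ can again be lifted to $\ir_i\in\RCent \widetilde{A}_B'$ by Proposition~\ref{lying-over}(3), after which property (1) of Theorem~\ref{PUgluingAB} applied to $A\to \widetilde{A}_B'$ yields $\q_1=\ir_1\cap D=\ir_2\cap D=\q_2$. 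Equiresiduality is similar: for $\q\in\RCent D$ over $\p$ and a lift $\ir$ of $\q$ in $\RCent \widetilde{A}_B'$, the composed extension $k(\p)\to k(\q)\to k(\ir)$ is an isomorphism by property (2) of Theorem~\ref{PUgluingAB}, so $k(\p)\to k(\q)$ is already an isomorphism.

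I expect the main obstacle to be precisely this verification, because Definition~\ref{defgluingAB} describes $D$ only through a local residue-field-level condition and does not at first sight imply the spectral properties needed for $s_c$-subintegrality; the bridge is Theorem~\ref{PUgluingAB}, which reformulates the central gluing exactly through the conditions (1)--(2) governing bijectivity and equiresiduality of the induced map on central spectra. Once $A\to D$ is shown to be $s_c$-subintegral, maximality (and hence uniqueness of the $s_c$-normalization in the sense of Definition~\ref{defscnormal}) is automatic from the preceding paragraph, and the two equalities asserted in 1) are then nothing more than Definition~\ref{defgluingAB} applied to $A\to \widetilde{A}_B'$.
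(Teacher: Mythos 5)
Your proposal is correct and follows essentially the same route as the paper's proof: case 2) uses Remark~\ref{triviallyempty} and the vacuity of the gluing condition when $\RCent A=\emptyset$, while case 1) locates $A^{s_c}_{\widetilde{A}_B'}$ as the unique candidate via Remark~\ref{remscbirat}, Proposition~\ref{PUintbiratclos}, and Corollary~\ref{inclnormglue}, and then verifies $s_c$-subintegrality by combining properties (1)--(2) of Theorem~\ref{PUgluingAB} with the lying-over surjectivities of Proposition~\ref{lying-over}. You only spell out the injectivity, surjectivity, and equiresiduality steps a bit more explicitly than the paper's terser phrasing, but the logic and the key lemmas are identical.
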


\begin{proof}
Assume $A$ is not a real domain. It follows from Remark \ref{triviallyempty} that $A\to A^\prime_B$ is trivially $s_c$-subintegral. Since a $s_c$-subintegral extension is integral then we get 2).

Let us prove 1). We assume $A$ is a real domain. Let $C$ be an intermediate domain between $A$ and $B$ such that $A\to
C$ is $s_c$-subintegral. By Remark \ref{remscbirat}, it follows that $A\to C$ is integral and birational and thus from Proposition \ref{PUintbiratclos} we get
$C\subset \widetilde{A}_B'$. By Corollary \ref{inclnormglue}, we get
$C\subset A^{s_c}_{\widetilde{A}_B'}$.
  
To end the proof it is sufficient to prove $A\to
A^{s_c}_{\widetilde{A}_B'}$ is $s_c$-subintegral.
We know that $A^{s_c}_{\widetilde{A}_B'}$ satisfies the properties (1) and
(2) of Theorem \ref{PUgluingAB} for the extension $A\to \widetilde{A}_B'$. It means that the map $\RCent
A^{s_c}_{\widetilde{A}_B'}\to\RCent A$ is injective and equiresidual by restriction to the
image of $\RCent (\widetilde{A}_B')\to\RCent A^{s_c}_{\widetilde{A}_B'}$. Since $A\to A^{s_c}_{\widetilde{A}_B'}$ and
$A^{s_c}_{\widetilde{A}_B'}\to \widetilde{A}_B'$ are integral and birational then the maps $\RCent
A^{s_c}_{\widetilde{A}_B'}\to\RCent A$ and $\RCent (\widetilde{A}_B')\to \RCent A^{s_c}_{\widetilde{A}_B'}$ are
surjective (Proposition \ref{lying-over}) and it gives the desired conclusion.
\end{proof}

For integral and birational extensions, we do not have to distinguish the empty case and we get:
\begin{cor} 
  \label{pb1cor}
Let $A\to B$ be an integral and birational extension of domains. The
central seminormalization of $A$ in $B$ and
the central gluing of $B$ over $A$ coincide i.e $$A^{s_c,*}_B=A^{s_c}_{B}$$
\end{cor}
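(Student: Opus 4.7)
The plan is to reduce the statement to a direct application of Theorem \ref{pb1} by using the hypotheses of integrality and birationality to simplify the various closures appearing in the formula for $A^{s_c,*}_B$.

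First, I would unpack $\widetilde{A}_B'$ under the assumptions. Since $A\to B$ is integral we have $A_B'=B$, and since it is birational the canonical map $\K(A)\to \K(B)$ is an isomorphism, so the pullback
\[
\widetilde{A}_B = B\times_{\K(B)}\K(A)
\]
of Definition \ref{defbiratclos} reduces to $B$ itself. Combining these two facts with Proposition \ref{commutclos} gives $\widetilde{A}_B' = B$.

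Next, I would split into the two cases of Theorem \ref{pb1}. In the case where $A$ is a real domain, part 1) of that theorem reads $A^{s_c,*}_B = A^{s_c}_{\widetilde{A}_B'}$, and substituting $\widetilde{A}_B'=B$ yields the desired equality $A^{s_c,*}_B = A^{s_c}_B$. In the case where $A$ is not a real domain, Proposition \ref{existcentralideal} gives $\RCent A=\emptyset$, so the defining condition for membership in the central gluing
\[
A^{s_c}_B = \{b\in B\mid \forall \p\in\RCent A,\ b_\p \in A_\p+\JRadCe(B_\p)\}
\]
is vacuous, whence $A^{s_c}_B = B$. On the other hand, part 2) of Theorem \ref{pb1} together with integrality gives $A^{s_c,*}_B = A'_B = B$, so again both sides agree.

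There is no real obstacle here; this is essentially a bookkeeping corollary of Theorem \ref{pb1}. The only point requiring a moment of care is the observation that in the non-real case the central gluing over $A$ collapses to all of $B$ because $\RCent A$ is empty, matching the fact that every integral extension of a non-real domain is trivially $s_c$-subintegral (Remark \ref{triviallyempty}).
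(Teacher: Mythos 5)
Your proof is correct and takes the same route the paper intends: the corollary is a direct consequence of Theorem \ref{pb1}, obtained by observing that integrality and birationality force $\widetilde{A}_B' = A_B' = B$, and then checking that both cases of the theorem collapse to $A^{s_c}_B$. The paper leaves this computation implicit (flagging only that "we do not have to distinguish the empty case"), and your unpacking of the non-real case via $\RCent A = \emptyset$ is exactly the right justification.
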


\begin{rem} In the special case $B=A'$, Corollary \ref{pb1cor} gives \cite[Prop. 2.23]{FMQ-futur2}.
\end{rem}

\begin{cor}
  \label{corpb1}
Let $A\to B$ be an extension of domains. The following
properties are equivalent:
\begin{enumerate}
\item[1)] $A$ is $s_c$-normal in $B$.
\item[2)] $\begin{cases} A=A^{s_c}_{\widetilde{A}_B'}\,\,{\rm if}\,A\,{\rm is\,\,a\,\,real\,\,domain},\\
A\,\,{\rm is\,\,integrally\,\,closed\,\,in}\,\,B\,\,{\rm else}.\end{cases}$
\end{enumerate}
\end{cor}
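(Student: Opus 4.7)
The plan is to derive this corollary as a direct consequence of Theorem \ref{pb1} together with the definition of $s_c$-normality. First I would unwind the definitions: by Definition \ref{defscnorm}, the statement ``$A$ is $s_c$-normal in $B$'' means that $A \to A$ is (trivially) $s_c$-subintegral and that no strictly larger intermediate domain $C' \subsetneq B$ extends $A$ $s_c$-subintegrally. By Definition \ref{defscnormal}, this is precisely the condition $A = A^{s_c,*}_B$, i.e.\ $A$ coincides with its own central seminormalization in $B$.

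Next I would split on whether $A$ is a real domain. If $A$ is a real domain, Theorem \ref{pb1}(1) gives $A^{s_c,*}_B = A^{s_c}_{\widetilde{A}_B'}$, so the equality $A = A^{s_c,*}_B$ translates immediately into $A = A^{s_c}_{\widetilde{A}_B'}$. If $A$ is not a real domain, Theorem \ref{pb1}(2) gives $A^{s_c,*}_B = A'_B$, so $A = A^{s_c,*}_B$ becomes $A = A'_B$, which is exactly the statement that $A$ is integrally closed in $B$.

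There is really no obstacle here, since all the content has been packaged into Theorem \ref{pb1}; this corollary is a tautological rewriting once $s_c$-normality is expressed via the existence and uniqueness of $A^{s_c,*}_B$. The only small point to check is the trivial direction in the real domain case: if $A = A^{s_c}_{\widetilde{A}_B'}$, then by Theorem \ref{pb1}(1) we have $A = A^{s_c,*}_B$, hence by maximality no proper intermediate extension of $A$ in $B$ can be $s_c$-subintegral, proving $A$ is $s_c$-normal in $B$. The converse implications in both cases are obtained by reading the same equalities backwards. Thus the proof will be a short two-line argument citing Theorem \ref{pb1} and Definition \ref{defscnorm}.
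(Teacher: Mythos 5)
Your proof is correct and is exactly the intended derivation. The corollary has no explicit proof in the paper because it is indeed an immediate consequence of Theorem \ref{pb1}: the central seminormalization $A^{s_c,*}_B$ is, by construction and Corollary \ref{inclnormglue}, the \emph{greatest} intermediate domain $C$ with $A\to C$ centrally subintegral, so ``$A$ is $s_c$-normal in $B$'' is equivalent to $A = A^{s_c,*}_B$; plugging in the two formulas for $A^{s_c,*}_B$ from Theorem \ref{pb1} gives exactly the two cases of the corollary. Your two-line argument citing Theorem \ref{pb1} and Definitions \ref{defscnorm}, \ref{defscnormal} is what the author has in mind.
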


\begin{ex}
Consider the extension $A=\R[x,y]/(y^2-x^3(x-1)^2(2-x))\to \R[x,z,u,v]/(z^2-x(2-x), u^4+z^2+1)=B$ such that $x\mapsto x$, $y\mapsto zx(x-1)$.
We may decompose $A\to B$ in the following way:\\
1. $A\to \R[x,Y]/(Y^2-x(x-1)^2(2-x))=C$ such that $x\mapsto x$ and $y\mapsto Yx$. This extension is clearly $s_c$-subintegral.\\
2. $C\to  \R[x,z]/(z^2-x(2-x))=D$ such that $x\mapsto x$ and $Y\mapsto z(x-1)$. This extension is integral and birational. Remark that $D$ is the integral and birational closure of $A$ and that $C$ is $s_c$-normal in $D$.\\
3. $D\to \R[x,z,u]/(z^2-x(2-x), u^4+z^2+1)=E$. This extension is integral but not birational.\\
4. $E\to B=E[v]$. \\
We get here that $A_B'=E$, $\widetilde{A}_B'=D$ and $A^{s_c,*}_B=C$. Since $\RCent E=\emptyset$ then $A^{s_c}_E=E$ and thus
$$A^{s_c,*}_B=A^{s_c}_{\widetilde{A}_B'}\not=A^{s_c}_{A_B'}=A_B'$$
and it proves that in 1) of Theorem \ref{pb1} it is necessary to consider the integral and birational closure and not only the integral closure before doing the central gluing.
\end{ex}

\section{Traverso's type structural decomposition theorem}

We have already proved the existence of a central seminormalization of a ring in another one. If we take an explicit geometric example i.e a finite extension of coordinate rings of two irreducible affine algebraic varieties over $\R$, due to the fact that when we do the central gluing then we glue together infinitely many ideals, it is in general not easy to compute the central seminormalization. In the main result of the paper, we prove that, under reasonable hypotheses, we can obtain the central seminormalization by only a finite number of central gluings and a birational gluing. More precisely, consider a finite extension $A\to B$ of noetherian domains with $A$ a real domain. We want
to show that the $s_c$-normalization $A^{s_c,*}_B$ of $A$ in $B$ can be
obtained from $B$ by the standard gluing over the null ideal of $A$ followed by a finite sequence of elementary central gluings over a
finite set of central prime ideals of $A$ like Traverso's decomposition
theorem for classical seminormal extensions \cite{T}. This result allows to prove in the next section that the processes of central seminormalization and localization commute
together. In this section, we make strong use of the results developed in Section \ref{sectCalg}.

\subsection{Central seminormality and conductor}

We prove in this section that the $s_c$-normality of an
extension is strongly related to a property of the conductor.

Let $A\to B$ be an extension of rings. We recall that the conductor of
$A$ in $B$, denoted by $(A:B)$, is the set $\{a\in A|\,aB\subset
A\}$. It is an ideal in $A$ and also in $B$.

\begin{prop}
\label{normcond}
Let $A\to B$ be an extension of domains with $A$ a real domain.
If $A$ is $s_c$-normal in $B$ then $(A:\tilde{A}_B')$ is a central
ideal in $\widetilde{A}_B'$.
\end{prop}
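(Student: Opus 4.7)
The plan is to set $R=\widetilde{A}_B'$ and $\mathfrak{c}=(A:R)$, and verify Definition \ref{defidealcentral} directly: given $r\in R$ and $b\in\sum\K(R)^2$ with $r^2+b\in\mathfrak{c}$, I want to show $r\in\mathfrak{c}$, i.e.\ $ry\in A$ for every $y\in R$. Two initial reductions are essential: first, since $A\to R$ is birational, $\K(R)=\K(A)$, so $b$ lies in $\sum\K(A)^2$ and in particular $b\in\sum\K(R_\mathfrak{p})^2$ for any localization; second, since $A$ is $s_c$-normal in $B$ and $A$ is a real domain, Corollary \ref{corpb1} gives
\[
A=A^{s_c}_R=\bigl\{c\in R\,\bigl|\,\forall\p\in\RCent A,\ c_\p\in A_\p+\JRadCe(R_\p)\bigr\}.
\]
So it suffices to show $(ry)_\p\in A_\p+\JRadCe(R_\p)$ for every $\p\in\RCent A$ and every $y\in R$.

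Next, I would split cases according to whether the conductor meets the complement of $\p$. If $\mathfrak{c}\not\subset\p$, pick $c\in\mathfrak{c}\setminus\p$; since $cR\subset A$ and $c$ is a unit in $A_\p$, we get $R_\p=A_\p$ inside $\K(A)$, so $(ry)_\p\in A_\p$ automatically. This case is routine and essentially just unpacks the definition of the conductor after localization.

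The substantive case is $\mathfrak{c}\subset\p$, where I aim to prove the stronger statement $r\in\JRadCe(R_\p)$, which clearly yields $ry\in\JRadCe(R_\p)$ for every $y$. Let $\q$ be any maximal central ideal of $R_\p$. Because $R$ is integral over $A$, the localization $R_\p$ is integral over $A_\p$, and maximal ideals contract to maximal ideals (Proposition \ref{lying-over}), so $\q\cap A_\p=\p A_\p$. In particular $\p\subset\q$. From the hypothesis $r^2+b\in\mathfrak{c}\subset\p$ we deduce $r^2+b\in\q$; since $\q$ is a central ideal of $R_\p$ and $b\in\sum\K(R_\p)^2$, Definition \ref{defidealcentral} forces $r\in\q$. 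As $\q$ was an arbitrary maximal central ideal, $r\in\JRadCe(R_\p)$, and we are done.

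The main (minor) obstacle is the birationality argument that lets the sum of squares $b\in\sum\K(A)^2$ be treated as lying in $\sum\K(R_\p)^2$ when we invoke centrality of $\q$: this is where it matters that we have passed to the integral \emph{and birational} closure $\widetilde{A}_B'$ rather than the full integral closure $A_B'$, so that denominators coming from $b$ do not leave the fraction field. Once this identification $\K(R)=\K(A)$ is in place, the argument is a straightforward combination of Corollary \ref{corpb1}, the lying-over/maximal-to-maximal behaviour of integral extensions, and the defining property of central ideals applied to each maximal central ideal of $R_\p$.
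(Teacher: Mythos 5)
Your proof is correct and follows essentially the same route as the paper: both reduce to the characterization $A=A^{s_c}_{\widetilde A_B'}$ from Corollary \ref{pb1cor}, split into the cases $(A:\widetilde A_B')\subset\p$ and $(A:\widetilde A_B')\not\subset\p$, and land the relevant element in $A_\p+\JRadCe((\widetilde A_B')_\p)$. The only difference is cosmetic: the paper invokes Corollary \ref{corradcent} and Proposition \ref{propradcent}(2) to reduce the claim to $\RadCe(A:\widetilde A_B')\subset A$ (an ideal of $\widetilde A_B'$ inside $A$ is automatically inside the conductor), whereas you verify Definition \ref{defidealcentral} directly and consequently must check $ry\in A$ for every $y\in\widetilde A_B'$, applying centrality of each maximal central ideal $\q$ of $(\widetilde A_B')_\p$ to the relation $r^2+b\in\q$ in place of the description of $\RadCe$ as an intersection of central primes. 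Both arguments use the same birationality remark to keep $b$ inside $\sum\K((\widetilde A_B')_\p)^2$.
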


\begin{proof}
Assume $A$ is $s_c$-normal in $B$. From Proposition \ref{intermednorm}, we know that $A$ is $s_c$-normal in $\widetilde{A}_B'$. For the rest of the proof, we replace $B$ by $\widetilde{A}_B'$.
Let $I=(A:B)$, it is the
biggest ideal in $B$ contained in $A$. By Corollary \ref{corradcent}, we have to show that $\RadCe
I\subset A$ where $\RadCe I$ is the central radical of $I$ in $B$.
Let $b\in B$ such that $b\in \RadCe I$ and let $\p\in\RCent A$.\\
$\bullet$ Assume $I\subset \p$. Since $b\in \RadCe I$
then $b\in\bigcap_{\q\in\RCent B,\,\q\cap A=\p}\q.$ Thus $b\in
\JRadCe (B_{\p})$.\\
$\bullet$ Assume $I\not\subset \p$. Then $B_{\p}=A_{\p}$.\\
We have proved that $b\in A_B^{s_c}$. The proof is done since
$A=A_B^{s_c}$ (Corollary \ref{pb1cor}).
\end{proof}

\begin{cor}
  \label{corcond}
Let $A\to B$ be an extension of domains with $A$ a real domain.
If $A$ is $s_c$-normal in $B$ then $(A:\tilde{A}_B')$ is a central
  ideal in $A$.
\end{cor}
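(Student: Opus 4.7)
The plan is to deduce this directly from Proposition \ref{normcond} together with the contraction result of Proposition \ref{contractcentral}. Indeed, Proposition \ref{normcond} already establishes that $(A:\widetilde{A}_B')$ is a central ideal of the larger ring $\widetilde{A}_B'$, and Proposition \ref{contractcentral} tells us that the contraction of a central ideal along an extension of domains remains central. So the only point to observe is that contracting the conductor back to $A$ does not change it.

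More precisely, I would first note that $\widetilde{A}_B'$ is a domain intermediate between $A$ and $B$, so that $A \to \widetilde{A}_B'$ is an extension of domains to which Proposition \ref{contractcentral} applies. Next I would recall the elementary fact that the conductor $(A:\widetilde{A}_B')$ is by definition contained in $A$, hence the intersection
\[
(A:\widetilde{A}_B') \cap A = (A:\widetilde{A}_B').
\]
Applying Proposition \ref{normcond}, the ideal $(A:\widetilde{A}_B')$ is central in $\widetilde{A}_B'$, and then applying Proposition \ref{contractcentral} to this central ideal, its contraction to $A$ is central in $A$. Combined with the identification above, this yields that $(A:\widetilde{A}_B')$ is a central ideal in $A$.

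There is no real obstacle here; the content of the corollary is entirely absorbed by Proposition \ref{normcond}, and the only thing to verify is the purely formal compatibility between the two notions of centrality (in $A$ versus in $\widetilde{A}_B'$), which is exactly what Proposition \ref{contractcentral} provides, using crucially that $A$ and $\widetilde{A}_B'$ share the same fraction field so that $\sum\K(A)^2 = \sum\K(\widetilde{A}_B')^2$.
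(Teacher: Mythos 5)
Your argument is exactly the paper's: apply Proposition~\ref{normcond} to see that $(A:\widetilde{A}_B')$ is central in $\widetilde{A}_B'$, then use Proposition~\ref{contractcentral} to conclude that its contraction to $A$ (which is just $(A:\widetilde{A}_B')$ itself, since the conductor sits inside $A$) is central in $A$. The extra remarks you add — that the conductor is already in $A$ and that $A$ and $\widetilde{A}_B'$ share a fraction field — are correct and only make the paper's one-line proof more explicit.
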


\begin{proof}
Since the contraction of a central ideal remains central (Proposition \ref{contractcentral}) then the proof follows from Proposition \ref{normcond}.
\end{proof}

\begin{prop}
  \label{transitivity}
Let $A\to B\to C$ be a sequence of integral and birational extensions of domains.
If $A$ is $s_c$-normal in $B$ and $B$ is
$s_c$-normal in $C$ then $A$ is $s_c$-normal in $C$.
\end{prop}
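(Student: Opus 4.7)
My approach will be to translate the $s_c$-normality hypotheses into equalities with central gluings via Corollary \ref{pb1cor} and Corollary \ref{corpb1}, then transfer the desired inclusion $A^{s_c}_C \subset A$ through the intermediate ring $B$ in two localization steps. First I will dispose of the degenerate case: if $A$ is not a real domain, then by Remark \ref{triviallyempty} every integral extension of $A$ is $s_c$-subintegral, so the $s_c$-normality of $A$ in $B$ forces $A = B$, and analogously $B = C$, yielding $A = C$ trivially. So I can assume $A$ is real, whence by birationality $B$ and $C$ share the same real fraction field, and the hypotheses become $A = A^{s_c}_B$ and $B = B^{s_c}_C$ while the goal is $A = A^{s_c}_C$.

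Given $c \in A^{s_c}_C$, the first step will be to show $c \in B^{s_c}_C = B$. Fix $\q \in \RCent B$ and set $\p = \q \cap A$, which lies in $\RCent A$ by Proposition \ref{contractcentral}. Since $A \setminus \p \subset B \setminus \q$, there is a natural localization map $C_\p \to C_\q$ that sends $A_\p$ into $B_\q$. Central maximal ideals of $C_\p$ (resp.\ $C_\q$) correspond to central primes of $C$ lying over $\p$ (resp.\ $\q$), and every central prime of $C$ over $\q$ automatically lies over $\p$, so the image of $\JRadCe(C_\p)$ in $C_\q$ is contained in $\JRadCe(C_\q)$. Writing $c_\p = a + x$ with $a \in A_\p$ and $x \in \JRadCe(C_\p)$ then yields $c_\q \in B_\q + \JRadCe(C_\q)$ for every such $\q$, giving $c \in B^{s_c}_C = B$.

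For the second step, knowing now that $c \in B$, I will show $c \in A^{s_c}_B = A$. Fix $\p \in \RCent A$ and write $c_\p = a + x$ as above; since $c$ and $a$ both lie in $B_\p \subset C_\p$, so does $x$, and it remains to check $x \in \JRadCe(B_\p)$. For any $\q \in \RCent B$ with $\q \cap A = \p$, Proposition \ref{lying-over}(3) applied to the integral birational extension $B \to C$ provides a central prime $\mathfrak{r}$ of $C$ with $\mathfrak{r} \cap B = \q$; then $\mathfrak{r} C_\p$ is a central maximal ideal of $C_\p$ containing $x$, and its contraction to $B_\p$ equals $\q B_\p$, so $x \in \q B_\p$. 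Intersecting over all such $\q$ gives $x \in \JRadCe(B_\p)$, hence $c_\p \in A_\p + \JRadCe(B_\p)$, and therefore $c \in A^{s_c}_B = A$.

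The main technical obstacle is the compatibility of central Jacobson radicals across the two different localizations, namely the stability of $\JRadCe(C_\p)$ under the map to $C_\q$ and the inclusion $\JRadCe(C_\p) \cap B_\p \subset \JRadCe(B_\p)$. Both rely on an explicit description of central maximal ideals of a localization in terms of central primes of the original ring, combined with the lying-over property of Proposition \ref{lying-over}(3), which crucially requires birationality of the extension.
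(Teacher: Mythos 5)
Your proof is correct. The degenerate (non-real) case is handled cleanly, and the first step (showing $A^{s_c}_C \subset B^{s_c}_C = B$ via the localization $C_\p \to C_\q$ and the containment $\JRadCe(C_\p) \subset \JRadCe(C_\q)$ inside the common fraction field) is exactly the paper's argument.

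Where you diverge is the second step. The paper concludes $A^{s_c}_C \subset A$ abstractly: since $A^{s_c}_C = A^{s_c,*}_C$ is $s_c$-subintegral over $A$ (Corollary \ref{pb1cor}) and sits inside $B$, Corollary \ref{inclnormglue} (the universal property of the central gluing) gives $A^{s_c}_C \subset A^{s_c}_B = A$ in one line. You instead run a second localization argument mirroring the first: fixing $\p \in \RCent A$, decomposing $c_\p = a + x$, and using lying-over (Proposition \ref{lying-over}(3)) for $B \to C$ to pull each central $\q \in \RCent B$ over $\p$ back to a central $\mathfrak{r} \in \RCent C$, whence $x \in \mathfrak{r}C_\p \cap B_\p = \q B_\p$, giving $x \in \JRadCe(B_\p)$. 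This is correct; the identification $\mathfrak{r}C_\p \cap B_\p = \q B_\p$ holds because $\mathfrak{r}C_\p \cap B = \mathfrak{r} \cap B = \q$ and $\q \cap A = \p$. Your version is more self-contained and pleasingly symmetric between the two steps, at the cost of re-deriving a special case of what Corollary \ref{inclnormglue} already packages; the paper's version is shorter by leaning on the universal property already established. Both are sound.
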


\begin{proof}
Let $c\in A_C^{s_c}$ and let
$\q\in\RCent B$. Let $\p=\q\cap A\in\RCent A$. We have
$c=\alpha+\beta$ with $\alpha\in A_{\p}$ and $\beta\in\JRadCe
(C_{\p})$. Since the central prime ideals of $C$ lying over $\q$ ly over
$\p$ we get $\JRadCe (C_{\p})\subset \JRadCe(C_{\q})$ (the inclusion
is seen in $\K(A)=\K(B)=\K(C)$). Since $A_{\p}\subset B_{\q}\subset \K(A)$ then
$c\in B_{\q}+\JRadCe(C_{\q})$. It follows that $b\in B_C^{s_c}=B$
(Corollary \ref{pb1cor}) and
thus $A_C^{s_c}\subset B$. Since $A\to A_C^{s_c}$ is
$s_c$-subintegral then we get $A_C^{s_c}\subset A_B^{s_c}$. Since $A$ is $s_c$-normal in $B$ then we get
$A=A_C^{s_c}$ i.e $A$ is is $s_c$-normal in $C$.
\end{proof}

\begin{prop}
  \label{notnormintermed}
Let $A\to B$ be an extension of 
domains. Let $C$ be an intermediate ring between $A$ and $A^{s_c,*}_B$. If
$C\not=A^{s_c,*}_B$ then $C$ is not $s_c$-normal in $B$.
\end{prop}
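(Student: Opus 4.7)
The plan is to directly unpack the definitions and apply the transitivity properties of $s_c$-subintegrality already established. By the very definition of the central seminormalization $A^{s_c,*}_B$ given in Definition \ref{defscnormal}, the extension $A\to A^{s_c,*}_B$ is $s_c$-subintegral (it is the maximal such intermediate subextension and in particular is itself one). So I start with the sequence
\[
A\to C\to A^{s_c,*}_B,
\]
where $C$ is intermediate between $A$ and $A^{s_c,*}_B$ with $C\neq A^{s_c,*}_B$.

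Next, I would apply Lemma \ref{propcentsubint} to this sequence. Since $A\to A^{s_c,*}_B$ is $s_c$-subintegral, the lemma forces both intermediate extensions $A\to C$ and $C\to A^{s_c,*}_B$ to be $s_c$-subintegral as well. In particular, $C\to A^{s_c,*}_B$ is a strict $s_c$-subintegral extension (strict because $C\subsetneq A^{s_c,*}_B$).

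Finally, since $A^{s_c,*}_B$ sits inside $B$, the ring $D:=A^{s_c,*}_B$ is an intermediate domain between $C$ and $B$ with $C\subsetneq D$ and $C\to D$ is $s_c$-subintegral. By Definition \ref{defscnorm}, this directly contradicts the condition for $C$ to be $s_c$-normal in $B$, and the proof is complete. There is no real obstacle here: the statement is essentially the contrapositive of the maximality clause defining $A^{s_c,*}_B$, mediated by the fact that intermediate subextensions of an $s_c$-subintegral extension are still $s_c$-subintegral (Lemma \ref{propcentsubint}).
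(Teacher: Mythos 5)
Your proof is correct and follows essentially the same route as the paper: both apply Lemma \ref{propcentsubint} to the sequence $A\to C\to A^{s_c,*}_B$ to deduce that $C\to A^{s_c,*}_B$ is $s_c$-subintegral, and then observe that $A^{s_c,*}_B$ is a strictly larger intermediate domain between $C$ and $B$, contradicting the $s_c$-normality of $C$ in $B$.
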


\begin{proof}
Assume $C\not=A^{s_c,*}_B$ and $C$ is $s_c$-normal in $B$. By
Proposition \ref{propcentsubint} we get that $C\to A^{s_c,*}_B$ is
$s_c$-subintegral contradicting the $s_c$-normality of $C$ in $B$.
\end{proof}

\subsection{Elementary central gluings}

We will adapt and revisit the definition of elementary gluing developed by Traverso \cite{T} in the central case. 

\subsubsection{Non-trivial elementary central gluings}

Throughout this section we consider the following situation:
Let $A\to B$ be an integral extension of domains with $A$ a real domain and let $\p\in\RCent
A$. We assume the set of central prime ideals of $B$ lying over $\p$ is
finite and non-empty (it follows from Propositions \ref{existcentralideal} and \ref{contractcentral} that $B$ is also a real domain), we denote by $\q_1,\ldots,\q_t$ these prime
ideals. Remark that by Proposition \ref{lying-over} the previous
condition is met if $A\to B$ is finite and birational. We denote by $\gamma_i:k(\p)\to k(\q_i)$ $i=1,\ldots,t$ the canonical
extensions of the residue fields. Let $Q=\prod_{i=1}^t k(\q_i)$ and
$\gamma=\prod_{i=1}^t\gamma_i$ be the injection $k(\p)\to Q$. Remark that $\gamma$ identifies $k(\p)$ with a subset of the diagonal of $Q$.

\begin{defn}
  \label{defgluing}
We define the ``central gluing of $B$ over
$\p$'', denoted by $A^{s_c,\p}_B$ defined as the fibre product $B\times_Q k(\p)$ i.e
the domain defined by the following pull-back diagram of
commutative rings \[
\begin{array}{ccccc}
& A^{s_c,\p}_B & \stackrel{i}\to & B & \\
&h\downarrow &&\downarrow g& \\
& k(\p) &\stackrel{\gamma}\to & Q& \\
\end{array}
\]\\
where $g$ is the composite $B\to \prod_{i=1}^t B/\q_i\to Q$. A central
gluing over a central prime ideal of this type is called a ``non-trivial elementary
central gluing''.
\end{defn}

\begin{rem} The term "non-trivial" in the previous definition corresponds to the fact that the set of central primes of $B$ lying over $\p$ is non-empty and thus we really glue something.
\end{rem}

\begin{rem}
Back to the diagram of Definition \ref{defgluing}. The map $i$
is an injection and $i(A^{s_c,\p}_B)$ is the set of elements $b$ in $B$ such that
$g(b)\in \gamma(k(\p))$. In particular we have that $i(A^{s_c,\p}_B)$ contains $A$. We identify $A^{s_c,\p}_B$ with $i(A^{s_c,\p}_B)$.
From the above commutative diagram, the ring $A^{s_c,\p}_B$ is determined by $$A^{s_c,\p}_B=\{b\in B|\,\forall
i\in\{1,\ldots,t\}\,\,b(\q_i)\in k(\p)\,\,(*)\,{\rm and}\,\forall
(i,j)\in\{1,\ldots,t\}^2\,\,b(\q_i)=b(\q_j)\,\,(**)\,\}.$$
\end{rem}

\begin{prop}
  \label{propertygluing}
We set $\q=\cap_{i=1}^t\q_i\cap A^{s_c,\p}_B$. We have:
\begin{enumerate}
\item[1)] $\cap_{i=1}^t\q_i\subset A^{s_c,\p}_B$ so
  $\q=\cap_{i=1}^t\q_i$ and $\q \subset (A^{s_c,\p}_B:B)$.
\item[2)] $\forall i\in\{1,\ldots,t\}$, $\q_i\cap A^{s_c,\p}_B=\q$ and thus $\q$
  is a central prime ideal of $A^{s_c,\p}_B$ lying over $\p$.
\item[3)] The extension $k(\p)\to k(\q)$ is an
  isomorphism.
\end{enumerate}
\end{prop}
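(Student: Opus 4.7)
The plan is to unpack the fibre product description
$$A^{s_c,\p}_B=\{b\in B\mid b(\q_i)\in \gamma_i(k(\p))\text{ for all }i\text{ and the values }b(\q_i)\text{ glue along }\gamma\}$$
and to repeatedly exploit the trivial observation that any element of $\cap_i\q_i$ automatically satisfies both conditions (*) and (**) with common value $0$.

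For (1), I would simply note that if $b\in\cap_{i=1}^t\q_i$ then $b(\q_i)=0=\gamma_i(0)$ for each $i$, so $b\in A^{s_c,\p}_B$; this yields $\cap_i\q_i\subset A^{s_c,\p}_B$ and therefore $\q=\cap_i\q_i$. For the conductor claim, given $b\in\q$ and $c\in B$, each product $bc$ lies in every $\q_i$, hence in $\cap_i\q_i=\q\subset A^{s_c,\p}_B$; this is exactly $\q\cdot B\subset A^{s_c,\p}_B$, i.e.\ $\q\subset (A^{s_c,\p}_B:B)$.

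For (2), the inclusion $\q\subset \q_i\cap A^{s_c,\p}_B$ is immediate from (1). Conversely, if $b\in\q_i\cap A^{s_c,\p}_B$ then $b(\q_i)=0$, and condition (**) of the fibre product forces $b(\q_j)=b(\q_i)=0$ for every $j$, so $b\in\cap_j\q_j=\q$. The ideal $\q=\q_i\cap A^{s_c,\p}_B$ is prime as the contraction of the prime ideal $\q_i$ of $B$; it is central by Proposition \ref{contractcentral} applied to the extension $A^{s_c,\p}_B\to B$ of domains. The lying-over relation $\q\cap A=\q_i\cap A=\p$ is then automatic.

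The substantive step is (3). The induced map $k(\p)\to k(\q)$ is injective as a map of fields, so it suffices to show surjectivity. Given $b\in A^{s_c,\p}_B$, the fibre product yields a unique $h(b)\in k(\p)$ with $b(\q_i)=\gamma_i(h(b))$ for every $i$. Writing $h(b)=a/s$ with $a\in A$ and $s\in A\setminus\p$, the identities $\gamma_i(s)b(\q_i)=\gamma_i(a)$ become $sb-a\in\q_i$ for every $i$, whence $sb-a\in\cap_i\q_i=\q$. Since $s\in A\setminus\p=A\setminus (\q\cap A)$ the element $s$ is invertible modulo $\q$, and the class of $b$ in $k(\q)$ equals the image of $a/s\in k(\p)$; this gives surjectivity. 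The main subtlety I expect is bookkeeping the various quotient and residue maps and using the fibre product condition (rather than just (*)) to produce the lift $h(b)$; once $h(b)$ is in hand, the rest is formal manipulation inside $k(\q)\hookrightarrow k(\q_i)$.
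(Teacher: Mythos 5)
Your argument is correct and follows essentially the same route as the paper: unpack the fibre product and use the projection $h\colon A^{s_c,\p}_B\to k(\p)$. The only genuine difference is in part (3), where the paper argues in one step that $\q=\ker(g\circ i)=\ker h$ (using injectivity of $\gamma$), so $A^{s_c,\p}_B/\q$ embeds in $k(\p)$ and hence $k(\q)\subset k(\p)$; your version reaches the same conclusion by writing $h(b)=a/s$ and chasing $sb-a$ into $\q$, which is the same idea spelled out element-by-element. One small wording caution: $s\notin\q$ makes $\bar s$ invertible in the \emph{fraction field} $k(\q)$, not in $A^{s_c,\p}_B/\q$ itself; and to conclude surjectivity you should note that the image of $k(\p)$ is a subfield of $k(\q)$ containing the image of $A^{s_c,\p}_B/\q$, whose fraction field is all of $k(\q)$.
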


\begin{proof}
Let $b\in \cap_{i=1}^t\q_i$. We have $\forall i\in\{1,\ldots,t\}$,
$b(\q_i)=0\in k(\p)$ so $b\in A^{s_c,\p}_B$ and we get 1).

Let $b\in\q_1\cap A^{s_c,\p}_B$. We have $b(\q_1)=0$ and $h(b)\in
k(\p)$. Thus $\forall i\in\{1,\ldots,t\}$ we get $(\gamma_i\circ h)
(b)=(\gamma_1\circ h)(b)=b(\q_i)=0$. It follows that $\gamma\circ
h(b)=(0,\ldots,0)\in Q$. It means $g(b)=(0,\ldots,0)$ and thus $b\in
\q=\ker g$. Hence $\q_1\cap A^{s_c,\p}_B\subset \q$ and thus $\q_1\cap
A^{s_c,\p}_B=\q$. Since $\q=\q_1\cap A^{s_c,\p}_B$ and $\q_1$ is central then $\q$ is also
central (Proposition \ref{contractcentral}). It is clear that $\q$
is an ideal in $A^{s_c,\p}_B$ and $B$ and thus $\q \subset (A^{s_c,\p}_B:B)$. Therefore the assertion 2) is true.

Since $\q=\ker (g\circ i)$ then it follows from the above commutative
diagram that $(A^{s_c,\p}_B/\q)\subset k(\p)$ and thus $k(\q)\subset k(\p)$. Since
by 2) $\q$
  is a prime ideal of $A^{s_c,\p}_B$ lying over $\p$ then we get
  $k(\p)\subset k(\q)$, and it gives 3).
\end{proof}

A non-trivial elementary central gluing satisfies the following universal property.
\begin{prop}
  \label{PUgluing}
The ring $ A^{s_c,\p}_B$ is
the biggest intermediate ring $C$ between $A$ and $B$ satisfying:
\begin{enumerate}
\item $\forall(i,j)\in\{1,\ldots,t\}^2$, $\q_i\cap C=\q_j\cap C$.
\item $\forall i\in\{1,\ldots,t\}$, $k(\p)\to k(\q_i\cap C)$ is an
  isomorphism.
\end{enumerate}
\end{prop}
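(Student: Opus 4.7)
The plan is to establish the universal property in two steps: verify that $A^{s_c,\p}_B$ itself satisfies conditions (1) and (2), then prove maximality by showing that any intermediate ring $C$ with these properties embeds into $A^{s_c,\p}_B$.

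For the first step, the ideal $\q = \bigcap_{i=1}^t \q_i \cap A^{s_c,\p}_B$ studied in Proposition \ref{propertygluing} provides exactly what is needed: assertion 2) of that proposition shows $\q_i \cap A^{s_c,\p}_B = \q$ for every $i$, yielding (1), and assertion 3) gives the isomorphism $k(\p) \to k(\q)$, which is exactly (2) for $C = A^{s_c,\p}_B$. So there is essentially nothing to do here beyond invoking Proposition \ref{propertygluing}.

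The substantive content is the maximality. Let $C$ be an intermediate ring between $A$ and $B$ satisfying (1) and (2), and fix $c \in C$. By (1), set $\q' = \q_i \cap C$, an ideal of $C$ independent of $i$. For each $i$, the extension $C/\q' \hookrightarrow B/\q_i$ induces an embedding on residue fields $k(\q') \hookrightarrow k(\q_i)$ which coincides (through the chain $k(\p) \to k(\q') \to k(\q_i)$) with the canonical $\gamma_i : k(\p) \to k(\q_i)$, once we use the isomorphism $k(\p) \simeq k(\q')$ from (2). Write $\bar c \in k(\p)$ for the element corresponding to the class of $c$ in $k(\q')$ via this isomorphism; crucially, $\bar c$ depends on $c$ but not on $i$, precisely because $\q'$ does not. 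Then $c(\q_i) = \gamma_i(\bar c)$ for each $i$, so
\[
g(c) = (c(\q_1),\ldots,c(\q_t)) = (\gamma_1(\bar c),\ldots,\gamma_t(\bar c)) = \gamma(\bar c) \in \gamma(k(\p)).
\]
By the fibre product description of $A^{s_c,\p}_B = B \times_Q k(\p)$ in Definition \ref{defgluing}, this means $c \in A^{s_c,\p}_B$, giving $C \subset A^{s_c,\p}_B$.

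The main subtlety to get right is the compatibility issue in the maximality argument: one must verify that the element $\bar c \in k(\p)$ produced from (2) is genuinely the same for all indices $i$, which is forced by (1) since $\q' = \q_i \cap C$ is common to all $i$ and the embedding $C/\q' \hookrightarrow B/\q_i$ factors the identification. Once this is cleanly set up, the conclusion is immediate from the fibre-product definition of $A^{s_c,\p}_B$, and no further ring-theoretic machinery is needed.
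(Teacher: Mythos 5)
Your proof is correct and follows the same approach as the paper: invoke Proposition \ref{propertygluing} for the first step and then show that (1) and (2) force every element of $C$ to satisfy the fibre-product conditions $(*)$ and $(**)$. The paper simply asserts this last implication as clear, whereas you spell out the compatibility argument via $\q'=\q_i\cap C$ and the chain $k(\p)\to k(\q')\to k(\q_i)$; the extra detail is welcome but not a different route.
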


\begin{proof}
The ring $A^{s_c,\p}_B$ satisfies (1) and (2) by Proposition \ref{propertygluing}. It is
clear that a subring of $B$ containing
$A$ satisfying (1) and (2) satisfies ($*$) and ($**$) and thus is
contained in $A^{s_c,\p}_B$.
\end{proof}

\begin{prop}
  \label{defequivgluing}
We have $$A^{s_c,\p}_B=\{b\in
B|\,b\in A_{\p}+\JRadCe(B_{\p})\}.$$
\end{prop}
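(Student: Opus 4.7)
My plan is to reduce the identity to an explicit description of $\JRadCe(B_\p)$, after which both inclusions follow from routine pullback computations. The first step is to identify $\JRadCe(B_\p)$ with $\bigcap_{i=1}^t \q_iB_\p$. Since $A\to B$ is integral, incomparability and going-up show that the maximal ideals of $B_\p$ correspond to the primes of $B$ lying over $\p$, among which $\q_1,\ldots,\q_t$ are the central ones by assumption. Contraction of a central ideal is central (Proposition \ref{contractcentral}), and a short calculation shows that each extension $\q_iB_\p$ remains central in $B_\p$: given $x\in B_\p$ and $y\in\sum\K(B)^2$ with $x^2+y\in\q_iB_\p$, pick $w\in A\setminus\p$ with $w(x^2+y)\in\q_i$, write $wx=\alpha/u$ with $\alpha\in B$, $u\in A\setminus\p$, and check $\alpha^2+u^2w^2y\in\q_i$; since $u^2w^2y\in\sum\K(B)^2$, centrality of $\q_i$ in $B$ gives $\alpha\in\q_i$ and hence $x=\alpha/(uw)\in\q_iB_\p$. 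So $\CentMax(B_\p)=\{\q_1B_\p,\ldots,\q_tB_\p\}$ and the desired description of $\JRadCe(B_\p)$ follows.

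Given this description, the inclusion $A^{s_c,\p}_B\subset\{b\in B\mid b\in A_\p+\JRadCe(B_\p)\}$ is immediate: for $b\in A^{s_c,\p}_B$, conditions $(*)$ and $(**)$ yield a single $c\in k(\p)$ with $\gamma_i(c)=b(\q_i)$ for all $i$; lifting $c$ to $a/s\in A_\p$, the image of $b-a/s$ in each $k(\q_i)$ vanishes, so $b-a/s\in\bigcap_i\q_iB_\p=\JRadCe(B_\p)$. Conversely, if $b\in B$ satisfies $b=a/s+j$ in $B_\p$ with $a/s\in A_\p$ and $j\in\bigcap_i\q_iB_\p$, then $b(\q_i)=\gamma_i((a/s)\bmod \p A_\p)$, which lies in $\gamma_i(k(\p))$ and is independent of $i$ under $\gamma$; both defining conditions of $A^{s_c,\p}_B$ are satisfied.

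The main technical obstacle is the verification that $\q_iB_\p$ remains central in $B_\p$: centrality involves denominators from $\K(B)$ that do not automatically cooperate with localization, and one must multiply through by the square of the clearing factor in order to package everything as an element of $B$ to which centrality of $\q_i$ in $B$ can be applied. Once this step is in hand, the statement reduces to the standard pullback characterization of the gluing combined with the usual description of a Jacobson-type radical after localization.
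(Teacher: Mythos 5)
Your proof is correct but takes a genuinely different, more self-contained route than the paper's. The paper's proof is essentially a two-line deferral: it observes, re-using the argument in Theorem \ref{PUgluingAB}, that the right-hand side is the biggest intermediate ring satisfying conditions (1) and (2) of Proposition \ref{PUgluing}, and then quotes that proposition's universal characterization of $A^{s_c,\p}_B$. You instead compute $\JRadCe(B_\p)$ explicitly by proving $\CentMax(B_\p)=\{\q_1B_\p,\ldots,\q_tB_\p\}$ and then verify both inclusions directly by chasing elements through the pullback. What this buys is an honest proof that each extended ideal $\q_iB_\p$ stays central in $B_\p$; the paper uses this fact silently inside the proof of Theorem \ref{PUgluingAB} (``since $\q_1B_\p$ and $\q_2B_\p$ are two maximal and central ideals of $B_\p$\dots'') without establishing it, so your lemma genuinely fills an implicit gap. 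The only blemish is a loosely phrased denominator-clearing step: you write $w(x^2+y)\in\q_i$, but $w(x^2+y)$ lives in $B_\p$ rather than in $B$; the precise statement is that $x^2+y=q/w$ with $q\in\q_i$ and $w\in A\setminus\p$. The underlying algebra --- multiply through by the square of the clearing factor so that $\alpha^2+(uw)^2y\in\q_i$ with $\alpha\in B$ and $(uw)^2y\in B\cap\sum\K(B)^2$, then invoke centrality of $\q_i$ in $B$ --- is sound, and both inclusions follow exactly as you describe.
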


\begin{proof}
Let $D$ denote the ring $\{b\in
B|\,b\in A_{\p}+\JRadCe(B_{\p})\}$.

From the proof of Theorem \ref{PUgluingAB}  we see that $D$ is the biggest subring of $B$
satisfying properties (1) and (2) of Proposition \ref{PUgluing},
therefore we get the proof.
\end{proof}

In case $A\to B$ is birational with $A$ a real domain then any elementary central gluing is non-trivial and we can strengthen the universal
property of (non-trivial) elementary central gluings.
\begin{prop}
  \label{PUgluingbirat}
Under the above notation and hypotheses, we assume in addition $A\to B$ is birational. The ring $A^{s_c,\p}_B$
defined above is the biggest element among the subrings $C$ of $B$ containing
$A$ satisfying:
\begin{enumerate}
\item[a)] There exists a unique ideal $\q'$ in $\RCent C$ lying over $\p$.
\item[b)] $k(\p)\to k(\q')$ is an
  isomorphism.
\end{enumerate}
\end{prop}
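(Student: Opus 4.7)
The plan is to deduce the statement directly from Proposition~\ref{PUgluing} by showing that, under the birationality hypothesis, conditions (a) and (b) here are equivalent to conditions (1) and (2) there for any intermediate domain $C$ between $A$ and $B$. The key ingredient is the lying-over property for central spectra on integral and birational extensions (Proposition~\ref{lying-over}(3)), which is available for every intermediate extension since $\K(A)=\K(B)$ forces each intermediate ring to have the same fraction field and to be integral over $A$.

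First I would verify that $A^{s_c,\p}_B$ itself satisfies (a) and (b). Set $\q=\bigcap_{i=1}^t\q_i\cap A^{s_c,\p}_B$. By Proposition~\ref{propertygluing} the ideal $\q$ is a central prime of $A^{s_c,\p}_B$ lying over $\p$ and $k(\p)\to k(\q)$ is an isomorphism, so (b) holds. For uniqueness in (a), observe that $A^{s_c,\p}_B\to B$ is integral and birational, hence by Proposition~\ref{lying-over}(3) the map $\RCent B\to\RCent A^{s_c,\p}_B$ is surjective. Every central prime of $A^{s_c,\p}_B$ over $\p$ is therefore the contraction of some $\q_i$, and since all the $\q_i$ contract to $\q$ by Proposition~\ref{propertygluing}(2), the prime $\q$ is the unique such ideal.

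Next I would show that any intermediate ring $C$ between $A$ and $B$ satisfying (a) and (b) is contained in $A^{s_c,\p}_B$. Because $C\subset B$ is a domain with $\K(C)=\K(A)$, the extensions $A\to C$ and $C\to B$ are both integral and birational. Applying Proposition~\ref{lying-over}(3) to $C\to B$, each $\q_i$ contracts to some central prime of $C$ lying over $\p$; by (a) this prime must equal the unique $\q'$. Hence $\q_i\cap C=\q'$ for every $i$, which is condition (1) of Proposition~\ref{PUgluing}, while condition (2) follows immediately from (b) since $\q_i\cap C=\q'$. Proposition~\ref{PUgluing} then yields $C\subset A^{s_c,\p}_B$, completing the argument.

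The only subtlety to watch is to make sure birationality is invoked at the right place: without it, the surjectivity $\RCent B\to\RCent C$ can fail and condition (a) would be strictly weaker than condition (1), so $A^{s_c,\p}_B$ would no longer be maximal among rings satisfying (a) and (b). No genuine obstacle is expected beyond this careful bookkeeping.
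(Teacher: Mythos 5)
Your proof is correct and follows essentially the same route as the paper: reduce both the maximality claim and the verification for $A^{s_c,\p}_B$ to Proposition \ref{PUgluing}, using the central lying-over property (Proposition \ref{lying-over}(3)) available because every intermediate extension is automatically integral and birational. One small citation slip: in the second half, the fact that each $\q_i\cap C$ is a central prime of $C$ over $\p$ comes from Proposition \ref{contractcentral} (contraction preserves centrality), not from Proposition \ref{lying-over}(3) — lying-over is genuinely needed only for the uniqueness in the first half; the containment $C\subset A^{s_c,\p}_B$ uses only contraction and hypothesis (a).
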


\begin{proof}
We first prove that $A^{s_c,\p}_B$ satisfies the properties a) and b) of the
proposition. Let $\q'\in\RCent A^{s_c,\p}_B$ lying over $\p$. By Proposition
\ref{lying-over} there exists a central prime ideal $\q''$ of $B$
lying over $\q'$. Since $\q'$ is lying over $\p$ then
$\q''$ must be one of the $\q_i$ and thus $\q'=\q_i\cap A^{s_c,\p}_B$. The properties a)
and b) comes from (1) and (2) of Proposition \ref{PUgluing}.

Let $C$ be an intermediate ring between $A$ and $B$ satisfying a) and
b). Let $\q'$ be the unique central prime ideal of $C$ lying over
$\p$. As above arguments there exists one of the $\q_i$ lying over
$\q'$. By unicity we must have $\forall(i,j)\in\{1,\ldots,t\}^2$,
$\q_i\cap C=\q_j\cap C=\q'$ and thus $C$ satisfies (1) of Proposition
\ref{PUgluing}. By b) we get that $C$ satisfies (2) of Proposition
\ref{PUgluing} and thus $C\subset A^{s_c,\p}_B$.
\end{proof}

We give some properties of non-trivial elementary central gluings. We start with a $s_c$-normality property.
\begin{prop}
  \label{prop5}
We have
$A^{s_c,\p}_B=(A^{s_c,\p}_B)^{s_c}_B$ and thus $A^{s_c,\p}_B$ is
$s_c$-normal in $B$.
\end{prop}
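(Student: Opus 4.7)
The inclusion $A^{s_c,\p}_B \subset (A^{s_c,\p}_B)^{s_c}_B$ is trivial. Write $A' := A^{s_c,\p}_B$ and recall that, by Proposition \ref{defequivgluing}, membership in $A'$ is governed by the single local condition $b \in A_\p + \JRadCe(B_\p)$. By Proposition \ref{propertygluing} there is a unique central prime $\q = \bigcap_i \q_i$ of $A'$ above $\p$, and the residue field extension $k(\p) \to k(\q)$ is an isomorphism. My plan is: take $b \in (A')^{s_c}_B$, specialize its defining condition at $\p' = \q$, and translate the resulting statement inside $B_\q = (A' \setminus \q)^{-1}B$ back to a statement inside $B_\p = (A \setminus \p)^{-1}B$.

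Two identifications will do the translation. First, the residue field isomorphism $k(\p) \simeq k(\q)$ lets me lift residue classes to $A_\p$, giving $A'_\q = A_\p + \q A'_\q$; since $\q \subset \q_i$ for every $i$, $\q A'_\q \subset \JRadCe(B_\q)$, and the condition at $\q$ rewrites as $b \in A_\p + \JRadCe(B_\q)$. Second, since $A' \to B$ is integral, incomparability forces every prime of $B$ contracting to $\q$ in $A'$ to contract to $\p$ in $A$, so the central primes of $B$ above $\q$ are exactly the $\q_i$; this identifies the central maximal ideals of $B_\q$, giving $\JRadCe(B_\q) = \bigcap_i \q_i B_\q$, and analogously $\JRadCe(B_\p) = \bigcap_i \q_i B_\p$.

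The remaining manipulation is routine: write $b = a/s + z$ with $a \in A$, $s \in A \setminus \p$, $z \in \JRadCe(B_\q)$; then $sb - a = sz$ lies in $B$ and in each $\q_i B_\q$. Since $\q_i \cap (A' \setminus \q) = \emptyset$, contraction yields $sb - a \in \q_i$ for every $i$, hence $(sb - a)/s \in \JRadCe(B_\p)$ inside $B_\p$, and therefore $b \in A_\p + \JRadCe(B_\p) = A'$ by Proposition \ref{defequivgluing}. Once $A' = (A')^{s_c}_B$ is established, the $s_c$-normality of $A'$ in $B$ follows immediately from Corollary \ref{inclnormglue}: any intermediate $C$ with $A' \to C$ centrally subintegral must satisfy $C \subset (A')^{s_c}_B = A'$.

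The step I expect to be the main obstacle is the identification $\JRadCe(B_\q) = \bigcap_i \q_i B_\q$, which requires that the central maximal ideals of the localization $B_\q$ be exactly the extensions $\q_i B_\q$. This rests on the finiteness hypothesis on central primes of $B$ above $\p$, on integrality of $A' \to B$, and on compatibility of the central spectrum with localization, the same ingredients that underlie the paper's definition of the central gluing $A^{s_c}_B$ itself.
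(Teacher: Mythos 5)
Your proof is correct. The key computation — identifying the central primes of $B$ above $\q$ with those above $\p$, deducing $\JRadCe(B_\q)=\bigcap_i \q_iB_\q$ and $\JRadCe(B_\p)=\bigcap_i\q_iB_\p$, and translating the membership condition at $\q$ into the one at $\p$ via $k(\p)\simeq k(\q)$ — all goes through: since $A\to A'\to B$ are integral, the maximal ideals of $B_\q$ correspond to primes of $B$ contracting to $\q$ in $A'$, and by Proposition~\ref{propertygluing} these coincide with the primes contracting to $\p$ in $A$, so the central ones on both sides are the $\q_i$. The element-by-element manipulation with $sb-a=sz$ and the contraction $\q_iB_\q\cap B=\q_i$ is sound. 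The final appeal to Corollary~\ref{inclnormglue} is the same as in the paper.

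Your route is genuinely different from the paper's, although both prove the nontrivial inclusion $(A^{s_c,\p}_B)^{s_c}_B\subset A^{s_c,\p}_B$. The paper argues at the level of universal properties: it invokes Theorem~\ref{PUgluingAB} (the central gluing over $A'$ is the largest subring of $B$ whose central primes collapse and whose residue fields are preserved over those of $A'$) to check that $(A')^{s_c}_B$ satisfies the two conditions of Proposition~\ref{PUgluing} for the original prime $\p$, then concludes by the maximality in~\ref{PUgluing}. You instead work with the element-wise local description of Proposition~\ref{defequivgluing} and push a single element through the localization at $\q$ and then back down to the localization at $\p$. The paper's approach is shorter once the two universal-property statements are in hand, but yours is more self-contained and makes visible exactly which facts carry the argument (the equality of central fibers above $\q$ and above $\p$, and the contraction formula for localization). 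Both are valid; the trade-off is conceptual slickness versus explicitness.
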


\begin{proof}
By Theorem \ref{PUgluingAB} it is clear that $(A^{s_c,\p}_B)^{s_c}_B$
satisfies the properties (1) and (2) of Proposition \ref{PUgluing} and
thus $A^{s_c,\p}_B=(A^{s_c,\p}_B)^{s_c}_B$. By Corollary
\ref{inclnormglue}, it follows that $A^{s_c,\p}_B$ is
$s_c$-normal in $B$.
\end{proof}

We prove that the operations of localization and non-trivial elementary central gluings commute together.
\begin{prop}
  \label{prop7}
We assume $S$ is a
multiplicative closed subset of $A$. We have:
\begin{enumerate}
\item[1)] If $S\cap \p\not=\emptyset$ then $S^{-1}(A^{s_c,\p}_B)=S^{-1}B$.
\item[2)] If $S\cap\, \p=\emptyset$ then $S^{-1}(A^{s_c,\p}_B)$ is the non-trivial elementary
  central gluing of $S^{-1}B$ over $S^{-1}\p$ i.e
  $$S^{-1}A^{s_c,\p}_B=(S^{-1}A)^{s_c,S^{-1}\p}_{S^{-1}B}.$$
\end{enumerate}
In particular,
$$S^{-1}(A^{s_c,\p}_B)=(S^{-1}(A^{s_c,\p}_B))^{s_c}_{S^{-1}B}$$ and thus
$S^{-1}(A^{s_c,\p}_B)$ is $s_c$-normal in $S^{-1}B$.
\end{prop}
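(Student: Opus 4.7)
The plan is to reduce both parts of the proposition to the characterization
$$A^{s_c,\p}_B = \{b \in B \mid b \in A_\p + \JRadCe(B_\p)\}$$
furnished by Proposition~\ref{defequivgluing}, together with the basic fact that when $S \cap \p = \emptyset$ the rings $A_\p$ and $B_\p$ coincide with $(S^{-1}A)_{S^{-1}\p}$ and $(S^{-1}B)_{S^{-1}\p}$ respectively.

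For part~1), I would pick $s \in S \cap \p$ and observe that for every $b \in B$ the product $sb$ lies in each $\q_i$ (since $s \in \p \subset \q_i$), so its image in $Q = \prod_{i=1}^t k(\q_i)$ is zero; hence $sb \in A^{s_c,\p}_B$ by the fibre product description of Definition~\ref{defgluing}. An arbitrary element $b/s' \in S^{-1}B$ can then be rewritten as $(sb)/(ss')$ with $sb \in A^{s_c,\p}_B$ and $ss' \in S$, yielding the non-trivial inclusion $S^{-1}B \subset S^{-1}(A^{s_c,\p}_B)$; the reverse inclusion is obvious.

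For part~2), the first task is to check that the elementary central gluing $(S^{-1}A)^{s_c,S^{-1}\p}_{S^{-1}B}$ is actually defined, i.e.\ that $S^{-1}\p$ is central in $S^{-1}A$ and that the central primes of $S^{-1}B$ lying over $S^{-1}\p$ form a finite non-empty set. Both statements follow from the usual correspondence between primes of $R$ disjoint from $S$ and primes of $S^{-1}R$, combined with the invariance of the fraction field under localization (sums of squares are unchanged, so centrality is preserved in both directions); the primes in question turn out to be exactly the $S^{-1}\q_i$, since $S \cap \q_i \subset S \cap \p = \emptyset$. Applying Proposition~\ref{defequivgluing} to $S^{-1}A \to S^{-1}B$ at $S^{-1}\p$ and using $(S^{-1}A)_{S^{-1}\p} = A_\p$, $(S^{-1}B)_{S^{-1}\p} = B_\p$ yields
$$(S^{-1}A)^{s_c,S^{-1}\p}_{S^{-1}B} = \{x \in S^{-1}B \mid x \in A_\p + \JRadCe(B_\p)\}.$$

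The comparison with $S^{-1}(A^{s_c,\p}_B)$ is then direct. The inclusion $\subset$ is immediate from the characterization of $A^{s_c,\p}_B$. For the converse, given $x = b/s \in S^{-1}B$ with $x \in A_\p + \JRadCe(B_\p)$, I would multiply by $s \in S \subset A \setminus \p$ (which is a unit in $A_\p$ and whose action stabilizes the ideal $\JRadCe(B_\p)$ of $B_\p$) to obtain $b \in A_\p + \JRadCe(B_\p)$, hence $b \in A^{s_c,\p}_B$ and $x \in S^{-1}(A^{s_c,\p}_B)$. The ``in particular'' clause is then automatic: in case~1) it reduces to $S^{-1}B$ being trivially its own central gluing, while in case~2) it follows from Proposition~\ref{prop5} applied to $(S^{-1}A)^{s_c,S^{-1}\p}_{S^{-1}B}$. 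I expect the main obstacle to be the bookkeeping required to transport the central spectrum and the $\JRadCe$ construction across the localization in part~2); once the identifications of local rings and of the $\q_i$ with the $S^{-1}\q_i$ are in place, Proposition~\ref{defequivgluing} does the remaining work essentially formally.
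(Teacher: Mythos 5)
Your proof is correct, and it takes a genuinely different route from the paper's in both parts. For part~1), the paper localizes the conductor: it uses Proposition~\ref{propertygluing} to see that $\q=\bigcap\q_i\subset(A^{s_c,\p}_B:B)$, notes that $S^{-1}\q$ becomes the unit ideal when $S$ meets $\p$, and concludes from the fact that the conductor commutes with localization. Your argument bypasses the conductor entirely: you pick $s\in S\cap\p$, observe $sb\in\bigcap\q_i\subset A^{s_c,\p}_B$ for any $b\in B$, and rewrite $b/s'=(sb)/(ss')$. Both arguments are short; yours is more elementary, the paper's recycles the conductor machinery that is central to Section~6 anyway. For part~2), the paper works with the fibre-product description from Definition~\ref{defgluing} and verifies the identification $S^{-1}(B\times_Q k(\p))=S^{-1}B\times_{S^{-1}Q}k(S^{-1}\p)$ by checking that the ingredients ($\p$, $\q$, the $\q_i$, the residue fields) are stable under localization. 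You instead invoke Proposition~\ref{defequivgluing} to rewrite both gluings in the $A_\p+\JRadCe(B_\p)$ form, use the equality of local rings $(S^{-1}A)_{S^{-1}\p}=A_\p$, $(S^{-1}B)_{S^{-1}\p}=B_\p$, and then prove both inclusions by multiplying by units $s\in S\subset A\setminus\p$ of $A_\p$. The two descriptions of the gluing are equivalent, so the choice is a matter of taste; your route has the small advantage that the passage to $S^{-1}$ is completely mechanical once the local rings are identified, whereas the paper's fibre-product computation requires a little more bookkeeping about $Q$ versus $S^{-1}Q$. Both proofs use Proposition~\ref{prop5} for the concluding $s_c$-normality statement, and you correctly note that in case~1) the claim is vacuous since $S^{-1}B$ is its own central gluing.
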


\begin{proof}
Let $\q= \cap_{i=1}^t\q_i$.
If $S\cap \p\not=\emptyset$ then $S^{-1}\q=S^{-1}(A^{s_c,\p}_B)$. The
conductor commutes with localization so
$S^{-1}(A^{s_c,\p}_B:B)=(S^{-1}(A^{s_c,\p}_B):S^{-1}B)$ contains
$S^{-1}\q=S^{-1}(A^{s_c,\p}_B)$ (Proposition \ref{propertygluing}) and thus 
$S^{-1}(A^{s_c,\p}_B)=S^{-1}B$.

Assume $S\cap\, \p=\emptyset$. Since $S^{-1}\p$, $S^{-1}\q$,
are central prime ideals, since the
$S^{-1}\q_i $, $i=1,\ldots,t$, are the central prime ideals of
$S^{-1}B$ lying over $S^{-1}\p$, since
$S^{-1}\q=\cap_{i=1}^t(S^{-1}\q_i)$, localization commutes with
quotient, and since $k(\p)=k(S^{-1}\p)$,
$k(\q)=k(S^{-1}\q)$, $k(\q_i)=k(S^{-1}\q_i)$ for $i=1,\ldots,t$ then
$$(S^{-1}A)^{s_c,S^{-1}\p}_{S^{-1}B}=S^{-1}B\times_{S^{-1}Q} k(S^{-1}\p)=S^{-1}B\times_Q k(\p)=S^{-1}(B\times_{Q} k(\p))=S^{-1}A^{s_c,\p}_B.$$
It shows that the following diagram \[
\begin{array}{ccccc}
& A^{s_c,\p}_B & \stackrel{i}\to & B & \\
&h\downarrow &&\downarrow g& \\
& k(\p) &\stackrel{\gamma}\to & Q& \\
\end{array}
\]\\ commutes with localization by $S$. The end of
the proof comes from Proposition \ref{prop5}.
\end{proof}

\begin{prop}
  \label{prop8}
Let $\p'\in\Sp A$ such that
$\p\not\subset \p'$. The prime ideals of $A^{s_c,\p}_B$ lying over
$\p'$ are in bijection with the prime ideals of $B$ lying over $\p'$
and moreover they have the same nature: real, non-real, central,
non-central.
\end{prop}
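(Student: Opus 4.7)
The strategy is to reduce to a localization in which $A^{s_c,\p}_B$ and $B$ coincide, using Proposition \ref{prop7}(1). Since $\p\not\subset\p'$, the multiplicative subset $S=A\setminus\p'$ meets $\p$, so Proposition \ref{prop7}(1) yields the crucial equality $S^{-1}A^{s_c,\p}_B=S^{-1}B$.

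Write $\ir_0=\ir\cap A^{s_c,\p}_B$ for a prime $\ir\in\Sp B$ lying over $\p'$. For the bijection, I apply the standard localization--contraction correspondence: for $R\in\{A^{s_c,\p}_B,B\}$, primes of $R$ lying over $\p'$ biject with primes of $S^{-1}R$ lying over the maximal ideal $\p'A_{\p'}$. Since the two localizations are equal, these two sets are identified, yielding the bijection $\ir\mapsto\ir_0$. Moreover, the local rings $B_{\ir}$ and $(A^{s_c,\p}_B)_{\ir_0}$ are both further localizations of the common ring $S^{-1}B=S^{-1}A^{s_c,\p}_B$ at the corresponding prime, hence equal; in particular $k(\ir)=k(\ir_0)$. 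Since a prime is real iff its residue field is formally real (Proposition \ref{defrealdomain} applied to the quotient), the real/non-real dichotomy is preserved.

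For the central/non-central dichotomy, I first establish the following localization lemma as the crux of the argument: for a domain $R$, multiplicative subset $T\subset R$ and prime $\q\subset R$ disjoint from $T$, $\q$ is central in $R$ if and only if $T^{-1}\q$ is central in $T^{-1}R$. The essential point is that $\K(R)=\K(T^{-1}R)$: given $x=r/s\in T^{-1}R$ and $b\in\sum\K(R)^2$ with $x^2+b\in T^{-1}\q$, multiplying by the unit $s^2$ yields $r^2+s^2b\in\q$ with $s^2b\in\sum\K(R)^2$, so centrality of $\q$ forces $r\in\q$ and hence $x\in T^{-1}\q$; the converse is immediate. Applying this lemma with $T=S$ to both $R=B$ and $R=A^{s_c,\p}_B$, and noting that $S^{-1}\ir$ and $S^{-1}\ir_0$ coincide as ideals of the common ring (both are maximal primes of $S^{-1}B$ lying over $\p'A_{\p'}$, by integrality over $S^{-1}A$, with $S^{-1}\ir_0\subset S^{-1}\ir$), I conclude that $\ir$ is central in $B$ iff $\ir_0$ is central in $A^{s_c,\p}_B$. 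The centrality step is the main obstacle, as it requires the localization invariance lemma and the tacit identification $\K(A^{s_c,\p}_B)=\K(B)$ via the common localization, which is what makes the $\sum(\cdot)^2$ conditions transfer faithfully between the two rings.
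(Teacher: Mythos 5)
Your proof follows the paper's strategy exactly: localize at $S=A\setminus\p'$ and observe that $S^{-1}A^{s_c,\p}_B=S^{-1}B$, which you obtain from Proposition~\ref{prop7}(1) (the paper invokes the conductor inclusion $\q\subset(A^{s_c,\p}_B:B)$ from Proposition~\ref{propertygluing}(1) directly, but this is exactly what is behind Proposition~\ref{prop7}(1), so the two are the same argument). You then fill in the bijection and preservation-of-nature details that the paper leaves implicit after ``$(A^{s_c,\p}_B)_{\p'}=B_{\p'}$''.

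There is one imprecision in the forward direction of your localization lemma for centrality: from $x^2+b\in T^{-1}\q$ with $x=r/s$, multiplying by the unit $s^2$ gives $r^2+s^2b\in T^{-1}\q$, \emph{not} $r^2+s^2b\in\q$ as you wrote, so centrality of $\q$ in $R$ cannot be applied yet. You still need to clear the remaining $T$-denominator: write $r^2+s^2b=q/u$ with $q\in\q$, $u\in T$, then multiply by $u^2$ to get $(ur)^2+(us)^2b=uq\in\q$, a relation in $R$ to which centrality of $\q$ applies, giving $ur\in\q$; since $u\in T$ is outside the prime $\q$, conclude $r\in\q$. The lemma is true and the fix is routine, but the step as written does not quite go through.
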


\begin{proof}
Let $\q= \cap_{i=1}^t\q_i$. By 1) of Proposition \ref{propertygluing},
we have $\q\subset (A^{s_c,\p}_B:B)$ thus $\p\subset
(A^{s_c,\p}_B:B)\cap A$ and therefore $(A^{s_c,\p}_B:B)\cap
A\not\subset \p'$. Thus $(A^{s_c,\p}_B)_{\p'}=B_{\p'}$ and the proof
is done.
\end{proof}

\subsubsection{Generalized elementary central gluings, the birational gluing and examples}

We generalize the concept of elementary
central gluing.
\begin{defn}
  \label{defgluing2}
Let $A\to B$ be a finite extension of domains with $A$ a real domain. Let $\p\in\RCent
A$. We define the ``central gluing of $B$ over
$\p$'', denoted by $A^{s_c,\p}_B$ defined as:
\begin{enumerate}
  \item[$\bullet$] If the set of central prime ideals of $B$ lying over
    $\p$ is non-empty then $A^{s_c,\p}_B$ is the ring defined in
    Definition \ref{defgluing} and we say that it is a non-trivial
    elementary central gluing.
  \item[$\bullet$] If not then $A^{s_c,\p}_B:=B$ and we say that it is
    a trivial elementary central gluing.
\end{enumerate}
\end{defn}

We can easily generalize Proposition \ref{defequivgluing} and show
that central gluings over rings can be written in terms of elementary
central gluings.
\begin{prop}
  \label{defequivgluing2}
Let $A\to B$ be a finite extension of domains with $A$ a real domain.
\begin{enumerate}
\item[1)] Let $\p\in\RCent
A$. We have $$A^{s_c,\p}_B=\{b\in
B|\,b\in A_{\p}+\JRadCe(B_{\p})\}.$$
\item[2)] The central gluing $A^{s_c}_B$ of $B$ over $A$ can be seen
  as simultaneous elementary central gluings of $B$ over all the central prime ideals of
  $A$. Namely, we have $$A^{s_c}_B=\bigcap_{\p\in\RCent A}
  A^{s_c,\p}_B.$$
\end{enumerate}
\end{prop}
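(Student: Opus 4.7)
The plan is to split the statement into its two parts and reduce each to earlier material.

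For part 1), I would argue according to the dichotomy in Definition \ref{defgluing2}. When at least one central prime of $B$ lies over $\p$, the ring $A^{s_c,\p}_B$ is the non-trivial elementary central gluing of Definition \ref{defgluing}, and Proposition \ref{defequivgluing} already asserts exactly the claimed formula, so there is nothing to do. When no central prime of $B$ lies over $\p$, by definition $A^{s_c,\p}_B = B$, and I would verify that the right-hand side also reduces to $B$. Via the usual correspondence between primes of the localization $B_\p$ and primes of $B$ disjoint from $A\setminus\p$, together with the integrality of $B$ over $A$ and Proposition \ref{contractcentral}, the central maximal ideals of $B_\p$ are in bijection with the central primes of $B$ lying over $\p$; hence under our hypothesis there are none. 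Consequently $\JRadCe(B_\p)$ is an empty intersection, equal to $B_\p$, so $A_\p + \JRadCe(B_\p) = B_\p$, and every $b\in B$ satisfies the condition trivially.

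For part 2), I would simply compare definitions. By Definition \ref{defgluingAB}, $A^{s_c}_B$ is cut out inside $B$ by the family of conditions $b_\p \in A_\p + \JRadCe(B_\p)$ indexed by $\p\in\RCent A$. Pulling the universal quantifier over $\p$ outside as an intersection and invoking part 1) to identify each single condition with the subring $A^{s_c,\p}_B$ yields the desired equality $A^{s_c}_B = \bigcap_{\p\in\RCent A} A^{s_c,\p}_B$.

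The only subtle point, and it is minor, is confirming that the empty-intersection convention $\JRadCe(B_\p) = B_\p$ in the trivial case is the one implicit in Definition \ref{defgluingAB}, so that the two flavours of gluing dovetail uniformly. This is the standard Jacobson-type convention and is consistent with how $\JRadCe$ is used throughout the paper; modulo this observation the proof is essentially a definition chase anchored by Proposition \ref{defequivgluing}.
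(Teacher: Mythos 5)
Your proof is correct and matches the paper's intent exactly: the paper gives no explicit proof for Proposition \ref{defequivgluing2}, merely stating that it is an "easy generalization" of Proposition \ref{defequivgluing}, and your argument supplies precisely the two missing pieces — the trivial-gluing case via the empty-intersection convention for $\JRadCe(B_\p)$, and the definition chase that reduces part 2) to part 1). One small observation worth retaining in your write-up is the identification you use implicitly: the central maximal ideals of $B_\p$ are exactly the $\q B_\p$ for $\q\in\RCent B$ lying over $\p$, which follows from lying-over/incomparability (so maximal ideals of $B_\p$ correspond to primes over $\p$) together with the fact that $\q B_\p\cap B=\q$ and $\K(B_\p)=\K(B)$ (so centrality is preserved in both directions).
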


The following property will be useful in the next section.
\begin{prop}
  \label{scsub+elementary}
Let $A\to C\to B$ be a sequence of two finite extensions of 
domains such that $A$ is a real domain and $A\to C$ is $s_c$-subintegral. Let $\p\in\RCent A$
and let $\p'\in\RCent C$ be the unique central prime ideal lying over
$\p$. We have $$A^{s_c,\p}_B=C^{s_c,\p'}_B.$$
\end{prop}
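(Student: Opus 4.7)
The plan is to apply the universal property of non-trivial elementary central gluings (Proposition \ref{PUgluing}) twice, after verifying that the defining data of the two gluings actually coincide.

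First I would dispose of the trivial case. If no central prime of $B$ lies over $\p$, then no central prime of $B$ can lie over $\p'$ either: by Proposition \ref{contractcentral}, contraction to $A$ of such an ideal would produce a central prime of $A$ above $\p'\cap A=\p$. Hence Definition \ref{defgluing2} gives directly $A^{s_c,\p}_B=B=C^{s_c,\p'}_B$.

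Now assume the set $\{\q_1,\ldots,\q_t\}$ of central primes of $B$ over $\p$ is non-empty. The decisive step is to prove that this set coincides with the set of central primes of $B$ over $\p'$. Indeed, for each $i$, $\q_i\cap C$ is a central prime of $C$ by Proposition \ref{contractcentral}, and it lies over $\p$; since $\p'$ is the unique such ideal, $\q_i\cap C=\p'$. The reverse inclusion of sets is trivial, as $\p'\cap A=\p$. Combined with the equiresiduality of $A\to C$, which yields an isomorphism $k(\p)\xrightarrow{\sim} k(\p')$, this identification turns conditions (1) and (2) of Proposition \ref{PUgluing} for the two gluings into the same conditions on intermediate subrings of $B$.

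From here the equality follows by two symmetric applications of the universal property. The ring $C$ is intermediate between $A$ and $B$ and satisfies (1) and (2) of Proposition \ref{PUgluing} relative to $\p$: (1) from the equalities $\q_i\cap C=\p'$ just established and (2) from equiresiduality of $A\to C$. Hence $C\subset A^{s_c,\p}_B$, so $A^{s_c,\p}_B$ is intermediate between $C$ and $B$; the preceding identifications show that it then satisfies the analogous conditions relative to $\p'$, whence $A^{s_c,\p}_B\subset C^{s_c,\p'}_B$ by maximality. Conversely, $C^{s_c,\p'}_B$ is intermediate between $A$ and $B$ and, by the same identifications, satisfies (1) and (2) relative to $\p$, so $C^{s_c,\p'}_B\subset A^{s_c,\p}_B$. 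The main obstacle is really only careful bookkeeping; the substance of the argument is the identification of the two sets of central primes of $B$, which is exactly what $s_c$-subintegrality of $A\to C$ delivers.
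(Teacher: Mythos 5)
Your proof is correct and rests on exactly the same key observation as the paper's: every central prime $\q_i$ of $B$ over $\p$ contracts to $\p'$ in $C$ (by uniqueness of $\p'$ and Proposition \ref{contractcentral}), so the two gluings are built from the same fiber set and, by equiresiduality, the same residue field. The paper then finishes in one line by equating the fiber products $B\times_Q k(\p')=B\times_Q k(\p)$ directly from Definition \ref{defgluing}, whereas you route the conclusion through two applications of the universal property (Proposition \ref{PUgluing}); that is a slightly longer but equally valid way to close.
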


\begin{proof}
If the set of central prime ideals of $B$ lying over
    $\p$ is empty then the set of central prime ideals of $B$ lying over
    $\p'$ is empty and $A^{s_c,\p}_B=C^{s_c,\p'}_B=B$.

Assume the set of central prime ideals of $B$ lying over
    $\p$ is non-empty. Let $\q$ be one of these ideals. Since $\p'$ is
    the unique central ideal of $C$ lying over $\p$ and since $\q\cap
    C$ is central then $\q$ lies over $\p'$. We have proved that
    the central prime ideals of $B$ lying over $\p$ or $\p'$ are the
    same, since $k(\p)=k(\p')$, it follows from Definition
    \ref{defgluing} that $C^{s_c,\p'}_B=B\times_{Q} k(\p')=B\times_Q k(\p)=A^{s_c,\p}_B.$
\end{proof}

Elementary central gluings are not sufficient to get a decomposition theorem due to the presence of the birational closure in Theorem \ref{pb1}.
Let $A\to B$ be an integral extension of domains and let $\p\in\Sp A$. The elementary Traverso's gluing of $B$ over $\p$ can be defined similarly
as in Definition \ref{defgluing} but here we consider all the prime ideals of $B$ lying over $\p$ and not only the central ones.  Following the proof of Proposition \ref{defequivgluing},
this gluing is 
$$\{b\in
B|\,b\in A_{\p}+\JRad(B_{\p})\}$$ 
From above and Definition \ref{defbiratclos}, we see that the birational closure is an elementary Traverso's gluing:
\begin{prop}\label{biratclos=gluingnull}
Let $A\to B$ be an integral extension of domains. Then, the birational closure $\widetilde{A}_B$ of $A$ in $B$ is the elementary Traverso's gluing of $B$ over the null ideal of $A$.
\end{prop}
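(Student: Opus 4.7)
The plan is a direct unpacking of the two definitions, showing both rings coincide with the subring $B\cap\K(A)$ of $\K(B)$.

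First, I would compute the elementary Traverso gluing of $B$ over the null ideal $(0)\in\Sp A$, which by its definition (mimicking Proposition \ref{defequivgluing} without the centrality restriction) equals
\[
\{b\in B\mid b\in A_{(0)}+\JRad(B_{(0)})\}.
\]
Here $A_{(0)}=\K(A)$. The key observation is that $B_{(0)}=(A\setminus\{0\})^{-1}B$ is an integral extension of the field $\K(A)$, and an integral extension of a field inside a domain is itself a field; since $\K(B)$ is generated as a field by $B$ over $A$ and $(A\setminus\{0\})^{-1}B$ is already a field containing $B$, one gets $B_{(0)}=\K(B)$. Consequently $\JRad(B_{(0)})=(0)$, and the gluing reduces to $\{b\in B\mid b\in\K(A)\}=B\cap\K(A)$, where the intersection is taken inside $\K(B)$.

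Next, I would identify the birational closure. By Definition \ref{defbiratclos} we have $\widetilde{A}_B=B\times_{\K(B)}\K(A)$, and since the natural maps $B\to\K(B)$ and $\K(A)\to\K(B)$ are both injective (the latter because $A\to B$ is an extension of domains), the pull-back is concretely the set of $b\in B$ whose image in $\K(B)$ lies in the subfield $\K(A)$, i.e. exactly $B\cap\K(A)$.

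The two descriptions therefore agree, which gives the proposition. There is essentially no obstacle: the only point worth stating carefully is the equality $B_{(0)}=\K(B)$, which is where the integrality hypothesis is used; without it one would only get an intermediate ring between $B$ and $\K(B)$ and the Jacobson radical computation would fail.
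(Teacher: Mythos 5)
Your proof is correct and fills in exactly what the paper leaves implicit (the paper offers no formal proof, just remarks that the claim follows ``from above and Definition \ref{defbiratclos}''). You work with the localization formula $\{b\in B\mid b\in A_{(0)}+\JRad(B_{(0)})\}$, whereas the paper's intended argument more likely runs through the fibre-product form of the Traverso gluing (as it does explicitly for the central analogue in Proposition \ref{gluingnull}): the unique prime of $B$ lying over $(0)$ is $(0)$ itself by incomparability for integral extensions, so the gluing is $B\times_{k((0))}k((0))=B\times_{\K(B)}\K(A)=\widetilde{A}_B$ directly. Your route via $B_{(0)}=\K(B)$ and $\JRad(B_{(0)})=(0)$ is an equivalent and slightly more computational unwinding of the same fact; both hinge on the integrality hypothesis forcing $B_{(0)}$ to be the full fraction field, and both correctly identify the gluing and $\widetilde{A}_B$ with $B\cap\K(A)$ inside $\K(B)$.
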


In the sequel, the "birational closure" will be also called the "birational gluing".
The birational gluing is sometimes an elementary central gluing:
\begin{prop}
  \label{gluingnull}
Let $A\to B$ be an integral extension of real domains. The central
gluing $A^{s_c,(0)}_B$ of $B$ over the null ideal of $A$ is the birational closure $\widetilde{A}_B$ of $A$ in $B$.
\end{prop}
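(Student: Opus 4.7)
The plan is to unwind the definitions and show that the only (central) prime of $B$ lying over $(0)_A$ is $(0)_B$ itself, so that the central gluing collapses to a single fibre product identical to the one defining the birational closure.

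First I would verify that the hypotheses of Definition \ref{defgluing} are met for $\p = (0)_A$. Since $A$ is a real domain, Proposition \ref{existcentralideal} gives that $(0)_A \in \RCent A$. The set of central prime ideals of $B$ lying over $(0)_A$ contains $(0)_B$ (which is central because $B$ is a real domain). By the incomparability property of integral extensions of domains (two distinct primes of $B$ cannot have the same contraction in $A$), $(0)_B$ is in fact the only prime of $B$ lying over $(0)_A$. Hence the set of central primes of $B$ over $(0)_A$ is $\{(0)_B\}$, which is finite and non-empty, so the non-trivial elementary central gluing $A^{s_c,(0)}_B$ is defined.

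Next I would simply unwind the fibre product. With $t = 1$, $\q_1 = (0)_B$, we have $k(\q_1) = \K(B)$, so $Q = \K(B)$. The map $\gamma \colon k(\p) = \K(A) \to Q = \K(B)$ is the canonical inclusion, and $g \colon B \to Q$ is the inclusion $B \hookrightarrow \K(B)$. Definition \ref{defgluing} therefore identifies
\[
A^{s_c,(0)}_B \;=\; B \times_{\K(B)} \K(A),
\]
which is exactly $\widetilde{A}_B$ by Definition \ref{defbiratclos}.

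The only step that is not purely symbolic is the use of incomparability to pin down $(0)_B$ as the unique prime over $(0)_A$; everything else is reading the relevant definitions. No obstacle of real depth arises.
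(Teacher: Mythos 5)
Your proof is correct and follows essentially the same route as the paper: identify $(0)_B$ as the unique central prime of $B$ over $(0)_A$, then observe that Definition \ref{defgluing} with $t=1$, $\q_1=(0)_B$ reduces to the fibre product $B\times_{\K(B)}\K(A)$, which is $\widetilde{A}_B$ by Definition \ref{defbiratclos}. One small inaccuracy worth correcting: the parenthetical gloss of incomparability (``two distinct primes of $B$ cannot have the same contraction'') is not the correct statement of that property --- incomparability only forbids two \emph{comparable} distinct primes from contracting to the same prime of $A$ --- but your conclusion still holds, since every prime of $B$ contains the minimal prime $(0)_B$ and is therefore comparable to it, so incomparability (or a direct degree argument on the minimal monic relation over the domain $A$) does pin $(0)_B$ down as the unique prime over $(0)_A$.
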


\begin{proof}
By Proposition \ref{defrealdomain}, the null ideal of $B$ is the unique central prime ideal of
$B$ lying over the null ideal of $A$. The commutative diagram of Definition \ref{defgluing} becomes in this situation
 \[
\begin{array}{ccccc}
& A^{s_c,(0)}_B & \stackrel{i}\to & B & \\
&h\downarrow &&\downarrow g& \\
& \K(A) &\stackrel{\gamma}\to & \K(B)& \\
\end{array}
\]\\
and thus $A^{s_c,(0)}_B=B\times_{\K(B)}\K(A)=\widetilde{A}_B$.
\end{proof}

\begin{ex}
  \label{parabole}
Consider the following finite extension of real domains $A=\R[x]\to
B=\R[x,y]/(y^2-x)$ sending $x$ to itself. Then $A$ is the central
gluing of $B$ over the null ideal of $A$ i.e $A=B\times_{\K(B)} \K(A)$
(Proposition \ref{gluingnull}). Indeed, if $f\in B$ 
then we may write $f=p+yq$ with $p,q\in\R[x]$ and if moreover $f\in
\K(A)$ then $q=0$.
\end{ex}

\begin{ex}
\label{parabole2}
Consider the following finite extension of domains $A=\R[x]\to
B=\R[x,y]/(y^2+x^2)$ sending $x$ to itself. Remark that the null ideal of $B$ is not a central ideal since $-1$ is a square in $\K(B)$ (see Proposition \ref{defrealdomain}).
As in the previous example, we can prove that $A$ is the birational gluing of $B$ over $A$ but here the birational gluing is not an elementary central gluing.
\end{ex}

\begin{ex}
  \label{Whithney}
Let $V$ be the Whitney umbrella i.e the affine algebraic surface over
$\R$ with coordinate ring $\R[V]=\R[x,y,z]/(y^2-zx^2)$ and let $V'$ be
its normalization. The coordinate ring of $V'$ is
$\R[V']=\R[x,Y,z]/(Y^2-z)$ and consider the finite birational
extension $\R[V]\to\R[V']$ given by sending $x$ to $x$, $y$ to $Yx$,
$z$ to $z$. We claim $\R[V]$ is equal to the central gluing of $\R[V']$
over the central prime ideal $\p=(x,y)$ of $\R[V]$. There is a unique
prime ideal of $\R[V']$ lying over $\p$, that we denote by $\q$, and
$\q=(x,Y^2-z)$ is also a central ideal. We have $k(\p)=\R(z)$ and
$k(\q)=\R(z)[Y]/(Y^2-z)$. Let $f\in \R[V]^{s_c,\p}_{\R[V']}=\R[V']\times_{k(\q)} k(\p)$, we may write
$f=g+Yv$ with $g,v\in\R[x,z]$. From the following commutative diagram
\[
\begin{array}{ccccc}
& \R[V]^{s_c,\p}_{\R[V']} & \stackrel{i}\to & \R[V'] & \\
&h\downarrow &&\downarrow g& \\
& \R(z) &\stackrel{\gamma}\to & \R(z)[Y]/(Y^2-z)& \\
\end{array}
\]\\
we see that $x$ must divide $v$ and thus $v=xs$ with $s\in\R[x,z]$. It
follows that $f=g+Yxs=g+ys\in\R[V]$ and it proves the claim.
\end{ex}

\subsection{Structural decomposition theorem}

As announced, we show that if a noetherian domain $A$ is centrally seminormal in a domain $B$ which is a finite $A$-module
then $A$ can be obtained from $B$ by the birational gluing followed by a finite number of successive
non-trivial elementary central gluings.

\begin{thm}
  \label{structuralthm}
Let $A\to B$ be a finite extension of domains and assume $A$ is a
noetherian ring. If $A$ is $s_c$-normal in $B$ then there is a finite
sequence $(B_i)_{i=0,\ldots,n}$ of real domains such that:
\begin{enumerate}
\item[1)] $A=B_n\subset \cdots\subset B_1\subset B_0=B$.
\item[2)] $B_1$ is the birational gluing of $B$ over $A$.
\item[3)] for $i\geq 1$, $B_{i+1}$ is the central gluing of $B_i$ over a central
  prime ideal of $A$.
\end{enumerate}
\end{thm}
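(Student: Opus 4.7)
I would build the sequence explicitly: first pass to the birational closure of $A$ in $B$ (this is the ``birational gluing''), then peel the resulting integral-birational extension down to $A$ by a finite cascade of elementary central gluings. The halting mechanism will be a noetherian induction on conductor ideals.

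\emph{Step 1 (the birational step).} Set $B_0:=B$ and $B_1:=\widetilde{A}_B$, the birational closure of Definition~\ref{defbiratclos}. Since the conclusion of the theorem requires all $B_i$ to be real domains, $A$ and $B$ are real; then $B_1\subset B$ is also real, because $\K(B_1)=\K(A)$ is real. Proposition~\ref{gluingnull} identifies $\widetilde{A}_B$ with the elementary central gluing $A^{s_c,(0)}_B$ of $B$ over the null central prime of $A$, which is precisely the form demanded for $B_1$ in part (2). The extension $A\to B_1$ is integral (as $A\to B$ is finite, hence integral) and birational; by Proposition~\ref{intermednorm}, $A$ remains $s_c$-normal in $B_1$, and Corollary~\ref{pb1cor} gives $A=A^{s_c}_{B_1}$.

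\emph{Step 2 (iterative elementary central gluings).} I will inductively construct a descending sequence $B_1\supseteq B_2\supseteq\cdots$ of real subdomains of $B_1$, each integral and birational over $A$, with $A$ still $s_c$-normal in it. Suppose $B_i$ has been built. If $B_i=A$, stop. Otherwise, Proposition~\ref{defequivgluing2}(2) rewrites the identity $A=A^{s_c}_{B_i}$ as the intersection $\bigcap_{\p\in\RCent A}A^{s_c,\p}_{B_i}$; because this equals $A\subsetneq B_i$, at least one factor $A^{s_c,\p}_{B_i}$ must be strictly smaller than $B_i$. Pick such a prime $\p_i\in\RCent A$ and set $B_{i+1}:=A^{s_c,\p_i}_{B_i}$. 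Then $B_{i+1}$ is again a real sub-domain of $B_i$ (reality passes to subrings), the extension $A\to B_{i+1}$ is still integral and birational, and Proposition~\ref{intermednorm} preserves $s_c$-normality, so the induction is well-posed. Each $B_{i+1}$ satisfies the requirement in part (3).

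\emph{Step 3 (termination, the main obstacle).} The genuine difficulty is showing this process stops after finitely many steps. My plan is a noetherian induction on the conductor ideal $J_i:=(A:B_i)$, which by Corollary~\ref{corcond} is a central ideal of $A$. Since the rings $B_i$ are descending, the conductors $J_0\subseteq J_1\subseteq\cdots$ form an ascending chain in the noetherian ring $A$, hence stabilize. It therefore suffices to verify that the inclusion is \emph{strict} whenever $B_i\neq A$. I would refine the choice of $\p_i$ to be a minimal central prime of $J_i$ (central by Proposition~\ref{centralnoeth}) at which the fibre of $\RCent B_i\to\RCent A$ over $\p_i$ is non-trivial; such a prime must exist, otherwise the collapse of $A=\bigcap_\p A^{s_c,\p}_{B_i}$ would already force $B_i=A$. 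Then combining the structural inclusion $\bigcap_j\q_j\subset(B_{i+1}:B_i)_{B_i}$ from Proposition~\ref{propertygluing} (where the $\q_j$ are the central primes of $B_i$ lying over $\p_i$) with the fact from Proposition~\ref{prop8} that the fibres over primes not above $\p_i$ are unaffected, the goal is to exhibit an element of $J_{i+1}$ lying outside $\p_i$, which removes $\p_i$ from the support $V(J_i)\cap\RCent A$. I expect the main technical hurdle to be exactly this last verification, the central analogue of the classical fact that Traverso's elementary gluing at a maximal ideal annihilates the corresponding stalk of $B/A$. Once strict ascent of the conductor chain is secured, ACC in $A$ forces some $B_n=A$, yielding the desired finite decomposition.
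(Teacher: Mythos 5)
You have the right architecture---pass to the birational closure, then iterate elementary central gluings over carefully chosen central primes, and control termination by ACC applied to the ascending chain of conductors. This is exactly the skeleton of the paper's argument. But the theorem lives or dies on the step you explicitly leave open: proving that the conductor chain ascends \emph{strictly} as long as $B_i\neq A$, i.e.\ that $J_{i+1}=(A:B_{i+1})\not\subset\p_i$ where $\p_i$ is the chosen minimal central prime over $J_i=(A:B_i)$. Writing ``I expect the main technical hurdle to be exactly this last verification'' correctly identifies the problem but does not solve it, and everything else (ACC $\Rightarrow$ termination) is routine once that claim is in hand; so the heart of the argument is missing.

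Here is how the paper closes the gap. Suppose toward a contradiction $J:=(A:B_{i+1})\subset\p:=\p_i$. Since $A$ is $s_c$-normal in $B$ it is $s_c$-normal in $B_{i+1}$ (Proposition~\ref{intermednorm}), so $J$ is a central ideal of $A$ and of $B_{i+1}$ by Proposition~\ref{normcond} and Corollary~\ref{corcond}. As $I\subset J\subset\p$ and $\p$ was chosen minimal over $I$, the prime $\p$ is also minimal over $J$; since $J$ is radical (central implies radical), localizing gives $J_\p=\p A_\p$. On the other hand $J_\p$ is central in $(B_{i+1})_\p$, hence equals the intersection of the central primes of $(B_{i+1})_\p$ containing it (Proposition~\ref{propradcent}); by Proposition~\ref{PUgluingbirat} there is a \emph{unique} central prime $\q$ of $B_{i+1}$ over $\p$, so $J_\p=\q(B_{i+1})_\p$, and moreover $k(\p)\simeq k(\q)$. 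Combining these facts, $A_\p/\p A_\p\simeq(B_{i+1})_\p/\q(B_{i+1})_\p$, and then for any $b\in(B_{i+1})_\p$ one writes $b=\alpha+\beta$ with $\alpha\in A_\p$, $\beta\in\q(B_{i+1})_\p=\p A_\p$, so $b\in A_\p$. Hence $A_\p=(B_{i+1})_\p$, which is incompatible with $J\subset\p$. This is precisely the central analogue of Traverso's ``gluing kills the stalk'' that you anticipated, and it is exactly what your proposal still needs to supply.

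A minor further point: the theorem quietly covers the degenerate case where $A$ is not a real domain (then $A=B$ by Corollary~\ref{corpb1} and the sequence is trivial). You should dispose of that case up front, before assuming, as your Step~1 implicitly does, that $A$, $B$, and all the $B_i$ are real.
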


\begin{proof}
We assume $A$ is $s_c$-normal in $B$. If $A$ is not a real domain then $A=B$ (Corollary \ref{corpb1}) and there is nothing to do. In the sequel of the proof we assume $A$ is a real domain.
  
First remark that $B$ is also a noetherian ring since it is a
noetherian $A$-module. Indeed a finite module over a noetherian ring
is a noetherian module.

Since $A$ is $s_c$-normal in $B$ then $A$ is
$s_c$-normal in $\widetilde{A}_B$ (Proposition \ref{intermednorm}). Since
$\widetilde{A}_B$ is the birational gluing of $B$ over $A$
(Proposition \ref{biratclos=gluingnull}) and since 
$A\to \widetilde{A}_B$ is finite (every submodule of a noetherian module is
finite) then we may assume $A\to B$ is birational in the rest of the proof. 

Assume we have already builded the sequence from $B_0$ to $B_i$ and
moreover that $B_i\not= A$. We denote $(A:B_i)$ simply by $I$. By
Proposition \ref{intermednorm} then $A$ is $s_c$-normal in $B_i$. By
Proposition \ref{normcond} and 
Corollary \ref{corcond} then $I$ is central in $B_i$ and also in
$A$. By Propositions \ref{centralnoeth} and \ref{propradcent}, the
minimal prime ideals of $A$ containing $I$ (in finite number by
noetherianity) are all central ideals and their intersection is
$I$. Let $\p$ be one of these minimal prime ideals. Set
$B_{i+1}=A^{s_c,\p}_{B_i}$ the central gluing of $B_i$ over $\p$ and
set $J=(A:B_{i+1})$.\\
We claim that $J\not\subset \p$:\\
Suppose $J\subset \p$. We have $I\subset J$ and
since $\p$ is a minimal prime ideal of $A$ containing $I$ then $\p$ is
also a minimal prime ideal of $A$ containg $J$. Since $A$ is
$s_c$-normal in $B$ then $A$ is $s_c$-normal in $B_{i+1}$ (Proposition
\ref{intermednorm}) and thus $J$ is a central ideal (Proposition \ref{normcond} and 
Corollary \ref{corcond}). We denote by $\q$ the unique central prime
ideal of $B_{i+1}$ lying over $\p$ given by Proposition
\ref{PUgluingbirat}. We localize in $\p$, we have
$J_{\p}=(A_{\p}:(B_{i+1})_{\p})=\p A_{\p}$ since $\p$ is a primary
component of $J$. Since $J_{\p}$ is a central ideal in
$(B_{i+1})_{\p}$ then it is the
intersection of the central prime ideals of $(B_{i+1})_{\p}$
containing it (Proposition \ref{propradcent}) so
\begin{equation} \label{equ1} J_{\p}=\p A_{\p}=\q(B_{i+1})_{\p}
\end{equation}
By Proposition
\ref{PUgluingbirat} we have $k(\p)=k(\q)$ and thus 
\begin{equation} \label{equ2} (A_{\p}/\p A_{\p})=((B_{i+1})_{\p}/\q (B_{i+1})_{\p})
\end{equation}
Let $b\in (B_{i+1})_{\p}$. By (\ref{equ2}) (we may also use
Proposition \ref{defequivgluing2} and 2) of Proposition \ref{propertygluing}) we may write $b=\alpha+\beta$ with
$\alpha\in A_{\p}$ and $\beta\in \q (B_{i+1})_{\p}$. By (\ref{equ1}),
we get $\beta\in\p A_{\p}$ and thus $b\in A_{\p}$. We have proved that
$A_{\p}=(B_{i+1})_{\p}$, this is impossible (since $J\subset \p$ by
hypothesis) and we get the claim.\\

We have $I\subset J$, $I\subset \p$ and $J\not\subset\p$. Therefore
$I\not= J$ and we may build a strictly ascending sequence of ideals as
soon as $B_i\not= A$. By noetherianity of $A$, we get the proof of the
theorem.
\end{proof}

\begin{cor}
  \label{corstructuralthm1}
Let $A\to B$ be a finite extension of real domains and assume $A$ is a
noetherian ring. If $A$ is $s_c$-normal in $B$ then $A$ can be obtained from $B$ by a finite number of successive
elementary central gluings over central prime ideals of $A$.
\end{cor}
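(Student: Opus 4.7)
The plan is to apply Theorem \ref{structuralthm} directly and then absorb the first step (the birational gluing) into the framework of elementary central gluings, which is possible precisely because we have restricted the hypothesis from a general noetherian real domain $A$ and general domain $B$ to the case where both $A$ and $B$ are real domains.

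First I would invoke Theorem \ref{structuralthm} to obtain a finite sequence $A = B_n \subset \cdots \subset B_1 \subset B_0 = B$ of real domains such that $B_1$ is the birational gluing of $B$ over $A$ and each further $B_{i+1}$ is an elementary central gluing of $B_i$ over some central prime ideal of $A$. The steps from $B_1$ onwards are already of the desired form, so the only thing to verify is that the initial step $B_0 \to B_1$ is also an elementary central gluing over a central prime ideal of $A$.

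For this, I would appeal to Proposition \ref{existcentralideal}: since $A$ is a real domain, the null ideal $(0)$ is a central ideal of $A$, hence $(0) \in \RCent A$. Since $B$ is also a real domain, Proposition \ref{gluingnull} (whose hypothesis is precisely that both rings in the integral extension are real domains) asserts that the birational gluing $\widetilde{A}_B$ coincides with the elementary central gluing $A^{s_c,(0)}_B$ of $B$ over the central prime ideal $(0) \in \RCent A$. Therefore $B_1 = \widetilde{A}_B = A^{s_c,(0)}_B$ is itself an elementary central gluing of $B$ over a central prime ideal of $A$, and the decomposition provided by Theorem \ref{structuralthm} becomes the required chain of elementary central gluings, completing the proof.

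There is essentially no obstacle beyond observing that the real domain hypothesis on $B$ (which was unnecessary in the more general Theorem \ref{structuralthm}) is exactly what makes Proposition \ref{gluingnull} applicable and thus converts the birational gluing into an elementary central gluing; the combinatorial/structural content is entirely supplied by the theorem already proved.
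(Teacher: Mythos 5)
Your argument is correct and follows the paper's own proof exactly: invoke Theorem \ref{structuralthm} for the decomposition, then use Proposition \ref{gluingnull} (applicable precisely because $B$, like $A$, is a real domain) to recognize the initial birational gluing as the elementary central gluing $A^{s_c,(0)}_B$ over the central prime ideal $(0)$. The paper states this more tersely but the substance is identical.
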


\begin{proof}
Since here $A$ and $B$ are real domains then the birational gluing of $B$ over $A$ is an elementary central gluing (Proposition \ref{gluingnull}) and thus the proof follows from Theorem
\ref{structuralthm}.
\end{proof}

From Theorem
\ref{structuralthm} we get a structural decomposition theorem for the central seminormalization of $A$ in $B$ with gluings over central prime ideals of $A^{s_c,*}_B$ and the birational gluing.
We prove now a structural decomposition theorem for the central seminormalization of $A$ in $B$ using only gluings over central prime ideals of $A$ and the birational gluing.
\begin{thm}
  \label{corstructuralthmbis}
Let $A\to B$ be a finite extension of domains and assume $A$ is a 
noetherian ring. The central seminormalization $A^{s_c,*}_B$ of $A$ in
$B$ is $B$ (if $A$ is not a real domain) or can be obtained from $B$ by the birational gluing over $A$ followed by a finite number of successive
elementary central gluings over central prime ideals of $A$.
\end{thm}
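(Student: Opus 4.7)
The plan is to bootstrap Theorem \ref{structuralthm} (the $s_c$-normal case) to the general case via Proposition \ref{scsub+elementary}. First, the easy case: if $A$ is not a real domain, then since $A\to B$ is finite every element of $B$ is integral over $A$, so $A_B'=B$, and Theorem \ref{pb1}(2) gives $A^{s_c,*}_B = A_B' = B$. From now on I assume $A$ is a real domain and set $C := A^{s_c,*}_B$.

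The key preliminary step is to observe that $C$ is $s_c$-normal in $B$ and that the pair $C\to B$ meets the hypotheses of Theorem \ref{structuralthm}. The former follows from Proposition \ref{defequivscnorm}: by its defining universal property $C$ is $s_c$-normal between $A$ and $B$, which decomposes as ``$A\to C$ is $s_c$-subintegral and $C$ is $s_c$-normal in $B$''. For the latter, $C$ is a submodule of the finite $A$-module $B$ over the noetherian ring $A$, hence a finite (so noetherian) $A$-algebra, and $B$ is then finite over $C$. Applying Theorem \ref{structuralthm} to $C\to B$ yields a chain
\[
C = C_n\subset C_{n-1}\subset\cdots\subset C_1\subset C_0=B,
\]
where $C_1$ is the birational gluing of $B$ over $C$ and each $C_{i+1}$ is the central gluing of $C_i$ over some $\p'_i\in\RCent C$.

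The final step is to convert these gluings from ``over $C$'' into gluings ``over $A$''. Since $A\to C$ is $s_c$-subintegral with $A$ real, Remark \ref{remscbirat} gives $\K(A)=\K(C)$, so $\widetilde{C}_B = B\times_{\K(B)}\K(C) = \widetilde{A}_B$, showing $C_1$ is the birational gluing of $B$ over $A$ as well. For each elementary gluing, the bijection $\RCent C\simeq \RCent A$ coming from $s_c$-subintegrality yields a unique $\p_i\in\RCent A$ lying under $\p'_i$, and Proposition \ref{scsub+elementary} applied to the sequence $A\to C\to C_i$ produces the equality $C^{s_c,\p'_i}_{C_i}=A^{s_c,\p_i}_{C_i}$; thus $C_{i+1}$ is an elementary central gluing of $C_i$ over the central prime $\p_i$ of $A$. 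The main point requiring care at each stage is verifying the hypotheses of Proposition \ref{scsub+elementary}, namely that $A\to C\to C_i$ is a sequence of finite extensions of real domains; the finiteness of $C\to C_i$ follows from $C_i\subset B$ and $B$ being finite over $C$, and the real domain property for each $C_i$ is part of the output of Theorem \ref{structuralthm}.
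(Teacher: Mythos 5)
Your proof is correct and follows essentially the same route as the paper's: apply Theorem \ref{structuralthm} to $C = A^{s_c,*}_B \to B$ (after noting $C$ is finite over $A$, noetherian, real, and $s_c$-normal in $B$), then transfer the birational gluing via $\K(A)=\K(C)$ and each elementary central gluing via Proposition \ref{scsub+elementary}. The only cosmetic differences are in the citations used for the preliminary facts (you invoke Proposition \ref{defequivscnorm} and Remark \ref{remscbirat} directly where the paper cites Theorem \ref{pb1} and Proposition \ref{PUbiratclos}), and your step-by-step hypothesis check for Proposition \ref{scsub+elementary} is slightly more explicit than the paper's.
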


\begin{proof}
If $A$ is not a real domain then $A^{s_c,*}_B=B$ by Theorem \ref{pb1} and there is nothing to do in that case. We assume $A$ is a real domain in the sequel of the proof.

The extension $A\to A^{s_c,*}_B$ is finite since every submodule of a noetherian module is
finite. It follows that $A^{s_c,*}_B$ is a noetherian ring and it is also a real domain (see Remark \ref{remscbirat}).  By Theorem \ref{pb1}, $A^{s_c,*}_B$ is $s_c$-normal in $B$.
From Theorem \ref{structuralthm}, $A^{s_c,*}_B$ can be obtained from $B$ by the birational gluing of $B$ over $A^{s_c,*}_B$ followed by a finite number of successive
elementary central gluings over central prime ideals of $A^{s_c,*}_B$.  Since $A\to A^{s_c,*}_B$ is birational, it follows from Proposition \ref{PUbiratclos} that the birational gluings 
of $B$ over $A$ and $A^{s_c,*}_B$ are the same. Since $A\to A^{s_c,*}_B$ is $s_c$-subintegral then it follows from Proposition \ref{scsub+elementary} that an elementary central gluing
(of an intermediate ring between $A^{s_c,*}_B$ and $B$) over a central prime ideal of $A^{s_c,*}_B$ is an elementary central gluing over a central prime ideal of $A$. The proof is done.
\end{proof}

\begin{cor}
  \label{corstructuralthmbis1}
Let $A\to B$ be a finite extension of real domains and assume $A$ is a 
noetherian ring. The central seminormalization $A^{s_c,*}_B$ of $A$ in
$B$ can be obtained from $B$ by a finite number of successive
elementary central gluings over central prime ideals of $A$.
\end{cor}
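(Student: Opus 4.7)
The plan is to reduce this corollary directly to the preceding Theorem \ref{corstructuralthmbis} by observing that, under the real-domain hypothesis on $A$ and $B$, the separate birational-gluing step appearing in that theorem is itself an elementary central gluing, and therefore does not need to be distinguished from the other steps in the decomposition.

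More precisely, since $A$ is a real domain, case 1) of Theorem \ref{pb1} applies, so $A^{s_c,*}_B$ is nontrivial and Theorem \ref{corstructuralthmbis} yields a finite sequence
\[
A^{s_c,*}_B = B_n \subset \cdots \subset B_1 \subset B_0 = B
\]
where $B_1 = \widetilde{A}_B$ is the birational gluing of $B$ over $A$ and each subsequent $B_{i+1}$ (for $i \geq 1$) is an elementary central gluing of $B_i$ over some central prime ideal of $A$. Since $A$ is real and $A \to B$ is injective with $B$ a real domain, each intermediate ring $B_i$ (lying between $A$ and $B$ inside $\K(B)$) is itself a real domain, so in particular $B_1$ is real.

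Because $A$ is a real domain, Proposition \ref{existcentralideal} (part 4) tells us that the null ideal $(0) \in \RCent A$. Moreover, Proposition \ref{gluingnull} applies to the integral extension $A \to B$ of real domains and identifies the birational closure $\widetilde{A}_B$ with the elementary central gluing $A^{s_c,(0)}_B$ of $B$ over the null ideal of $A$. Therefore the passage $B_0 = B \rightsquigarrow B_1$ is in fact an elementary central gluing over the central prime ideal $(0) \in \RCent A$, on exactly the same footing as the later steps.

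The main (and essentially only) subtlety is verifying that the reality hypothesis propagates properly: one must check that the null ideal truly qualifies as a central prime ideal of $A$ in the sense of Definition \ref{defcentralprime}, which is exactly the content of Proposition \ref{existcentralideal} applied to the real domain $A$. Once this is in hand, concatenating the relabeled first step with the subsequent elementary central gluings over central prime ideals of $A$ from Theorem \ref{corstructuralthmbis} gives the desired presentation of $A^{s_c,*}_B$ purely in terms of successive elementary central gluings over central prime ideals of $A$.
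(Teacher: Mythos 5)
Your proposal is correct and matches the paper's intent exactly: the paper states this corollary without a separate proof, treating it as an immediate consequence of Theorem~\ref{corstructuralthmbis} via the same observation used in the proof of Corollary~\ref{corstructuralthm1}, namely that when $A$ and $B$ are both real domains the birational gluing of $B$ over $A$ is the elementary central gluing over the null ideal $(0)\in\RCent A$ (Proposition~\ref{gluingnull}). Your explicit verification that $(0)$ lies in $\RCent A$ (via Proposition~\ref{existcentralideal}) and that the intermediate rings remain real domains is a careful and faithful spelling-out of the paper's brief reasoning.
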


From Corollaries \ref{pb1cor} and \ref{corstructuralthmbis1} we get:
\begin{cor}
  \label{corstructuralthmbis2}
Let $A\to B$ be a finite and birational extension of real domains and assume $A$ is a
noetherian ring. The central gluing $A^{s_c}_B$ of $B$ over $A$ can be
obtained from $B$ by a finite number of successive
elementary central gluings over central prime ideals of $A$.
\end{cor}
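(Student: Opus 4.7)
The plan is straightforward: this statement is an immediate consequence of the two corollaries cited just above it, and the proof is essentially to observe that under the stated hypotheses all the required inputs are in place.

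First I would note that since $A\to B$ is finite it is in particular integral, and by hypothesis it is birational with $A$ a real domain, so the hypotheses of Corollary \ref{pb1cor} are satisfied. This yields the identification
\[
A^{s_c}_B \;=\; A^{s_c,*}_B,
\]
which replaces the central gluing by the central seminormalization of $A$ in $B$.

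Next, $A$ and $B$ are real domains, $A\to B$ is finite, and $A$ is noetherian, so the hypotheses of Corollary \ref{corstructuralthmbis1} are met. That corollary produces a finite sequence of successive elementary central gluings over central prime ideals of $A$ which, starting from $B$, terminates at $A^{s_c,*}_B$. Combining this with the identification above gives exactly the claimed decomposition of $A^{s_c}_B$.

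There is no real obstacle here beyond bookkeeping: one just has to verify that ``finite and birational with $A$, $B$ real domains and $A$ noetherian'' simultaneously triggers both Corollary \ref{pb1cor} (which requires only integral and birational) and Corollary \ref{corstructuralthmbis1} (which requires finite, both real, and $A$ noetherian). Since every hypothesis of the present statement supplies one of these, the conclusion follows immediately by concatenating the two results, with no additional argument required.
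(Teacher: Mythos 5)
Your proof is correct and follows exactly the same route as the paper: the paper's own argument is precisely the observation that the statement follows by combining Corollary \ref{pb1cor} (to replace $A^{s_c}_B$ by $A^{s_c,*}_B$) with Corollary \ref{corstructuralthmbis1} (to decompose $A^{s_c,*}_B$ into a finite sequence of elementary central gluings). Your careful check that the hypotheses of both corollaries are simultaneously met is the only content needed.
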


We want to replace the word "successive" by "simultaneous" in the statement of Corollary \ref{corstructuralthmbis2}.
\begin{lem}
\label{lemsimult}
Let $A\to C\to B$ be a sequence of two integral and birational extensions of real domains. Let $\p\in\RCent A$. Then
$$A^{s_c,\p}_C=A^{s_c,\p}_B\cap C=C\times_B A^{s_c,\p}_B$$
\end{lem}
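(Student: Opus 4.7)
The strategy is to prove the set equality $A^{s_c,\p}_C=A^{s_c,\p}_B\cap C$ directly from the fibre-product description of an elementary central gluing, and then to identify $A^{s_c,\p}_B\cap C$ with the pullback $C\times_B A^{s_c,\p}_B$ by a formal observation about intersections of subrings of a common ring.

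First I fix notation. Since $A\to B$ and $A\to C$ are integral and birational between real domains and $\p\in\RCent A$, Propositions \ref{contractcentral} and \ref{lying-over} provide surjective maps $\RCent B\to\RCent C\to\RCent A$ over $\p$; in particular the sets $\{\q_j'\}_{j=1}^s$ of central primes of $C$ over $\p$ and $\{\q_i\}_{i=1}^t$ of central primes of $B$ over $\p$ are both non-empty. For each $\q_i$ the contraction $\q_i\cap C$ is central in $C$ by Proposition \ref{contractcentral} and lies over $\p$, so equals some $\q_{\sigma(i)}'$; conversely every $\q_j'$ is obtained this way by the surjectivity of $\RCent B\to\RCent C$. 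This gives compatible residue-field towers $k(\p)\hookrightarrow k(\q_j')\hookrightarrow k(\q_i)$ whenever $\q_i\cap C=\q_j'$, which commute with evaluation along $A\to C\to B$.

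For the inclusion $A^{s_c,\p}_C\subseteq A^{s_c,\p}_B\cap C$, I take $c\in A^{s_c,\p}_C$ and verify conditions $(*)$ and $(**)$ of Definition \ref{defgluing} for $c$ viewed inside $B$: if $c(\q_j')=\gamma_j(\alpha)$ for a common $\alpha\in k(\p)$, then for any $\q_i$ with $\q_i\cap C=\q_j'$ the image of $c$ in $k(\q_i)$ is computed by following $\alpha$ through $k(\p)\to k(\q_j')\to k(\q_i)$, and so lies in the diagonal image of $k(\p)$ in $\prod_i k(\q_i)$. Conversely, take $c\in A^{s_c,\p}_B\cap C$ with $c(\q_i)=\alpha\in k(\p)$ common to all $i$. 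For each $\q_j'$ I pick (by Proposition \ref{lying-over} applied to the integral birational extension $C\to B$) a lift $\q_i\in\RCent B$ with $\q_i\cap C=\q_j'$; then the image of $c(\q_j')\in k(\q_j')$ in $k(\q_i)$ equals $\alpha$, and since $k(\q_j')\hookrightarrow k(\q_i)$ is injective and $\alpha\in k(\p)\subseteq k(\q_j')$ maps to $\alpha$, one concludes $c(\q_j')=\alpha$. Hence $c\in A^{s_c,\p}_C$.

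The identification $A^{s_c,\p}_B\cap C=C\times_B A^{s_c,\p}_B$ is immediate from Definition \ref{fiberproduct} applied to the inclusions $C\hookrightarrow B\hookleftarrow A^{s_c,\p}_B$: a compatible pair is $(c,x)$ with $c\in C$, $x\in A^{s_c,\p}_B$ and $c=x$ in $B$, hence corresponds bijectively to an element of $C\cap A^{s_c,\p}_B$. The only delicate point in the whole argument is the reverse inclusion above, where one must upgrade the information that $c(\q_j')$ has image $\alpha\in k(\p)$ in some larger field $k(\q_i)$ to the statement that already $c(\q_j')=\alpha$ inside $k(\q_j')$; this is precisely what the injectivity of the residue-field extension provides, and it explains why the surjectivity of $\RCent B\to\RCent C$, furnished by birationality, is the essential ingredient.
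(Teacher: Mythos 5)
Your proof is correct, but it takes a genuinely different route from the paper's. You argue directly with the fibre-product description from Definition \ref{defgluing}: you track the elements through the residue-field towers $k(\p)\to k(\q_j')\to k(\q_i)$, using lying-over (Proposition \ref{lying-over} applied to $C\to B$) to lift each $\q_j'\in\RCent C$ over $\p$ to some $\q_i\in\RCent B$, and then exploiting the injectivity of the residue-field extension to descend the condition ``$c(\q_i)=\alpha\in k(\p)$'' down to ``$c(\q_j')=\alpha$.'' The paper instead invokes the compact characterization of Proposition \ref{defequivgluing} (the description via $A_{\p}+\JRadCe(B_{\p})$) and reduces the lemma to the single equality $\JRadCe(B_{\p})\cap C_{\p}=\JRadCe(C_{\p})$, which is an immediate consequence of the central lying-over for maximal ideals (Proposition \ref{lying-over} 4)). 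Both routes hinge on the same essential ingredient --- lying-over for central primes in integral birational extensions --- but yours makes the residue-field bookkeeping explicit, while the paper's is a two-line Jacobson-radical argument. The paper's version is shorter; yours has the pedagogical advantage of exposing precisely where the injectivity of $k(\q_j')\hookrightarrow k(\q_i)$ and the surjectivity of $\RCent B\to\RCent C$ enter.
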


\begin{proof}
Since $C\to B$ is integral and birational then it follows from 4) of Proposition \ref{lying-over} that $\JRadCe B_{\p}\cap C_{\p}=\JRadCe C_{\p}$. From Proposition \ref{defequivgluing} it follows
that $$A^{s_c,\p}_C=\{c\in
C |\,b\in A_{\p}+\JRadCe(C_{\p})\}=\{c\in
C |\,b\in A_{\p}+(\JRadCe(B_{\p})\cap C)\}$$ $$=(\{b\in
B |\,b\in A_{\p}+\JRadCe(B_{\p})\})\cap C=A^{s_c,\p}_B\cap C=C\times_B A^{s_c,\p}_B$$
\end{proof}

\begin{prop}
\label{thmsimult}
Let $A\to B$ be a finite and birational extension of real domains and assume $A$ is a
noetherian ring. If $A$ is $s_c$-normal in $B$ and $A\not=B$ then there exist a finite number $\p_1,\ldots,\p_n$ of central prime ideals of $A$ such that
$A$ can be obtained by simultaneous elementary central gluings of $B$ over $\p_1,\ldots,\p_n$, namely
$$A=\bigcap_{i=1}^n
  A^{s_c,\p_i}_B$$
\end{prop}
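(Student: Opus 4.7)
The plan is to combine the structural decomposition result (Corollary \ref{corstructuralthmbis2}) with the intersection formula of Lemma \ref{lemsimult} via a short induction. The structural theorem gives us successive gluings; the lemma is exactly the tool to collapse successive gluings into a simultaneous intersection inside $B$.

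More precisely, since $A\to B$ is finite and birational between real domains, $A$ is noetherian, and $A$ is $s_c$-normal in $B$, Corollary \ref{corstructuralthmbis2} (or equivalently Corollary \ref{corstructuralthmbis1} applied to $A=A^{s_c,*}_B=A^{s_c}_B$, using Corollary \ref{pb1cor}) produces a finite chain of real domains
\[
A=B_n\subset B_{n-1}\subset\cdots\subset B_1\subset B_0=B,
\]
together with central prime ideals $\p_1,\ldots,\p_n\in\RCent A$, such that $B_{i+1}=A^{s_c,\p_{i+1}}_{B_i}$ for each $i=0,\ldots,n-1$. Since $A\not=B$ we have $n\geq 1$. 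Each intermediate extension $A\to B_i\to B$ is integral (as $A\to B$ is integral) and birational (as $A\to B$ is birational), and each $B_i$ is a real domain, so all the hypotheses of Lemma \ref{lemsimult} are met at every step.

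I now claim by induction on $i\in\{1,\ldots,n\}$ that
\[
B_i=\bigcap_{j=1}^{i}A^{s_c,\p_j}_{B}.
\]
For $i=1$ we have $B_1=A^{s_c,\p_1}_{B_0}=A^{s_c,\p_1}_B$. For the inductive step, applying Lemma \ref{lemsimult} to the sequence $A\to B_i\to B$ and the central prime $\p_{i+1}\in\RCent A$ gives
\[
B_{i+1}=A^{s_c,\p_{i+1}}_{B_i}=A^{s_c,\p_{i+1}}_B\cap B_i=A^{s_c,\p_{i+1}}_B\cap \bigcap_{j=1}^{i}A^{s_c,\p_j}_B=\bigcap_{j=1}^{i+1}A^{s_c,\p_j}_B,
\]
which completes the induction. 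Specializing to $i=n$ yields $A=B_n=\bigcap_{i=1}^n A^{s_c,\p_i}_B$, as required.

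There is no real obstacle beyond bookkeeping: the key conceptual content is already contained in Lemma \ref{lemsimult} (compatibility of elementary central gluing with intersections along intermediate birational extensions) and in Corollary \ref{corstructuralthmbis2} (existence of a successive decomposition). The only point to verify carefully is that every intermediate $B_i$ produced by the structural theorem is a real domain and that all intermediate extensions remain integral and birational, so that Lemma \ref{lemsimult} may legitimately be invoked at each step; both are immediate from the statement of Theorem \ref{structuralthm} and from the fact that $A\to B$ is assumed integral and birational.
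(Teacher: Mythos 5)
Your proof is correct and follows essentially the same route as the paper: invoke the structural decomposition (the paper uses Corollary \ref{corstructuralthm1}, you use the equivalent Corollary \ref{corstructuralthmbis2}, which reduces to it once $A=A^{s_c,*}_B=A^{s_c}_B$) to get the chain of successive elementary central gluings, then collapse that chain into a simultaneous intersection by iterating Lemma \ref{lemsimult}. You simply spell out the induction that the paper summarizes as ``by successive applications of Lemma \ref{lemsimult}.''
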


\begin{proof}
By Corollary \ref{corstructuralthm1}, there are a finite
sequence $(B_i)_{i=0,\ldots,n}$ ($n>0$ since $A\not= B$) of real domains and a finite number $\p_1,\ldots,\p_n$ of central prime ideals of $A$ such that:
\begin{enumerate}
\item[1)] $A=B_n\subset \cdots\subset B_1\subset B_0=B$.
\item[2)] $B_{i+1}$ is the elementary central gluing of $B_i$ over $\p_{i+1}$ for $i=0,\ldots,n-1$.
\end{enumerate}
By successive applications of  Lemma \ref{lemsimult}, for $i=0,\ldots,n-1$ we get that $$B_{i+1}=\bigcap_{j=1}^{i+1}
  A^{s_c,\p_j}_B$$
\end{proof}

From Corollary \ref{corstructuralthmbis2} and Proposition \ref{thmsimult}, we get:
\begin{cor}
  \label{corstructuralthmsimult1}
Let $A\to B$ be a finite and birational extension of real domains and assume $A$ is a
noetherian ring. The central gluing $A^{s_c}_B$ of $B$ over $A$ is the intersection of a finite number of elementary central gluings of $B$ over central prime ideals of $A$.
\end{cor}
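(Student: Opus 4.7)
The plan is to apply Proposition \ref{thmsimult} not directly to $A\to B$ (since $A$ need not already be $s_c$-normal in $B$) but to the intermediate extension $A^{s_c}_B\to B$, and then transport the resulting central primes of $A^{s_c}_B$ down to central primes of $A$ using Proposition \ref{scsub+elementary}.

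First I will set up the intermediate extension. By Corollary \ref{pb1cor}, the central gluing coincides with the central seminormalization, $A^{s_c}_B=A^{s_c,*}_B$, so by the construction of the latter (Theorem \ref{pb1}), $A\to A^{s_c}_B$ is $s_c$-subintegral, $A^{s_c}_B$ is a real domain by Remark \ref{remscbirat}, and it is $s_c$-normal in $B$ (the maximality built into Definition \ref{defscnormal}, combined with transitivity of $s_c$-subintegrality from Lemma \ref{propcentsubint}, forces any strictly larger $s_c$-subintegral extension over $A^{s_c}_B$ to be $s_c$-subintegral over $A$ too, contradicting maximality). Since $A$ is noetherian and $A\to B$ is finite, $A^{s_c}_B$ is a finite $A$-module, hence a noetherian real domain, and the extension $A^{s_c}_B\to B$ inherits finiteness and birationality. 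This packages exactly the hypotheses of Proposition \ref{thmsimult} for the shifted extension $A^{s_c}_B\to B$.

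Next, if $A^{s_c}_B\neq B$, applying Proposition \ref{thmsimult} to $A^{s_c}_B\to B$ yields central prime ideals $\p'_1,\ldots,\p'_n$ of $A^{s_c}_B$ such that
\[
A^{s_c}_B=\bigcap_{i=1}^n (A^{s_c}_B)^{s_c,\p'_i}_B.
\]
The boundary case $A^{s_c}_B=B$ is immediate from Proposition \ref{defequivgluing2}: each elementary gluing $A^{s_c,\p}_B$ then equals $B$, so taking any single $\p\in\RCent A$, which is nonempty by Proposition \ref{existcentralideal}, gives the required finite intersection. Setting $\p_i=\p'_i\cap A\in\RCent A$ (central by Proposition \ref{contractcentral}), the $s_c$-subintegrality of $A\to A^{s_c}_B$ makes $\p'_i$ the unique central prime of $A^{s_c}_B$ lying over $\p_i$, and Proposition \ref{scsub+elementary} identifies $(A^{s_c}_B)^{s_c,\p'_i}_B=A^{s_c,\p_i}_B$, producing
\[
A^{s_c}_B=\bigcap_{i=1}^n A^{s_c,\p_i}_B.
\]

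No step presents a genuine obstacle. The only point requiring care is the bookkeeping in the first step, where one must confirm that the hypotheses of Proposition \ref{thmsimult} really do transfer from $A\to B$ to $A^{s_c}_B\to B$; this reduces to transitivity of $s_c$-subintegrality (Lemma \ref{propcentsubint}), the standard fact that submodules of a finite module over a noetherian ring are finite, and the trivial preservation of birationality under passage to intermediate domains.
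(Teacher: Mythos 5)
Your proof is correct and uses the same toolkit as the paper, just with the steps reassembled: you apply Proposition \ref{thmsimult} to the shifted extension $A^{s_c}_B\to B$ (obtaining central primes of $A^{s_c}_B$) and then transport them down to $A$ via Proposition \ref{scsub+elementary}, whereas the paper reaches Corollary \ref{corstructuralthmbis2} first (which already packages the descent to primes of $A$ inside its proof, via the same Proposition \ref{scsub+elementary}) and then converts the resulting successive decomposition into a simultaneous one by iterating Lemma \ref{lemsimult} as in the proof of Proposition \ref{thmsimult}. The bookkeeping you flag is handled correctly: $A^{s_c}_B=A^{s_c,*}_B$ by Corollary \ref{pb1cor}, it is a finite $A$-module hence noetherian, it is real by Remark \ref{remscbirat}, and it is $s_c$-normal in $B$ by Proposition \ref{defequivscnorm} (or your direct maximality argument via Lemma \ref{propcentsubint}); the degenerate case $A^{s_c}_B=B$ is correctly dispatched through Proposition \ref{defequivgluing2} together with $\RCent A\neq\emptyset$. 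In short, a valid alternate packaging of the paper's argument rather than a genuinely different route.
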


\begin{ex} \label{kollar}
Let $V$ be the Koll\'ar surface i.e the affine algebraic surface over
$\R$ with coordinate ring $\R[V]=\R[x,y,z]/(y^3-(1+z^2)x^3)$ and let $V'$ be
its normalization. The coordinate ring of $V'$ is
$\R[V']=\R[x,Y,z]/(Y^3-(1+z^2))$ and consider the finite birational
extension $\R[V]\to\R[V']$ given by sending $x$ to $x$, $y$ to $Yx$,
$z$ to $z$. Remark that $V$ and $V'$ are both central. Let
$\p=(x,y)\in\RCent \R[V]$, we have $k(\p)=\R(z)$. Let $\q$ be the
unique real (and central) ideal of $\R[V']$ lying over $\p$, we have
$k(\q)=\R(z)(^{3}\sqrt{1+z^2})$. Let $W$ be the affine algebraic surface over $\R$ with coordinate ring $\R[V][y^2/x]$. Since $y^2/x\in \SRR(V(\R))$ then it follows from Theorem \ref{scgeom} that
$\R[V]\to\R[W]$ is $s_c$-subintegral. 
To ilustate Theorem \ref{structuralthm}, we claim that $\R[W]$ can be obtained
from $\R[V']$ by a unique elementary central gluing, namely
$\R[W]=\R[V]^{s_c,\p}_{\R[V']}$. Since $\R[V]\to\R[W]$ is $s_c$-subintegral then $\R[W]\subset \R[V]^{s_c,*}=\R[V]^{s_c}_{\R[V']}$. By Proposition \ref{prop5} we get 
$\R[V]^{s_c,*}\subset (\R[V]^{s_c,\p}_{\R[V']})^{s_c,*}=\R[V]^{s_c,\p}_{\R[V']}$ and thus we have $$\R[W]\subset \R[V]^{s_c,\p}_{\R[V']}.$$
Let $f\in \R[V]^{s_c,\p}_{\R[V']}$, we may write
$f=g+Yh+Y^2t$ with $g,h,t\in\R[x,z]$. From the following commutative diagram
\[
\begin{array}{ccccc}
& \R[V]^{s_c,\p}_{\R[V']} & \stackrel{i}\to & \R[V'] & \\
&h\downarrow &&\downarrow g& \\
& \R(z) &\stackrel{\gamma}\to & \R(z)[Y]/(Y^3-(1+z^2))& \\
\end{array}
\]\\
we see that $x$ must divide $h$ and also $t$ and thus
$h=xs$, $t=x r$ with $s,r\in\R[x,z]$. It
follows that $f=g+Yxs+Y^2xr=g+ys+(y^2/x)r\in\R[W]$ and it proves the claim. Since $\R[V]\to\R[W]$ is $s_c$-subintegral, it follows from Proposition \ref{prop5} that 
$\R[W]$ is the $s_c$-normalization of $\R[V]$ i.e $$\R[W]=\R[V]^{s_c,*}.$$
\end{ex}

\begin{ex} \label{curve}
Let $n\in\N\setminus\{0\}$ and let $C$ be the affine plane algebraic
curve over $\R$ with coordinate ring
$\R[C]=\R[x,y]/(y^2-x\prod_{i=1}^n(x-i)^2)$. Let $C'$ be the
normalization of $C$, we have $\R[C']=\R[x,Y]/(Y^2-x)$ and the finite
birational extension $\R[C]\to\R[C']$ is given by sending $x$ to $x$
and $y$ to $Y\prod_{i=1}^n(x-i)$. The curve $C'$ is smooth and the
curve $C$ has only nodal and central singularities corresponding to the
maximal ideals $\m_i=(x-i,y)$ of $\R[C]$ for $i=1,\ldots,n$. We denote
by $\m_i'$ and $\m_i''$ the two distincts ideals of $\R[C']$ lying over
$\m_i$, we have $\m_i'=(x-i,Y-\sqrt{i})$ and
$\m_i''=(x-i,Y+\sqrt{i})$. Since $k(\m_i)=k(\m_i')=k(\m_i'')=\R$ then it
is clear that $\R[C]$ is $s_c$-normal in $\R[C']$
i.e $$\R[C]=\R[C]^{s_c}_{\R[C']}=\{ f\in\R[C']|\,\,{\rm
  for}\,\,i=1,\ldots,n,\,\,f(\m_i')=f(\m_i'')\}.$$
We set $C_0=C'$ and, for $i=1,\ldots,n$, we set $C_i$ to be the affine plane algebraic
curve over $\R$ with coordinate ring
$\R[C_i]=\R[x,Y_i]/(Y_i^2-x\prod_{j=1}^i(x-j)^2)$. Remark that 
$\R[C]=\R[C_n]\subset \cdots\subset \R[C_1]\subset \R[C_0]=\R[C']$. Since the extension
$\R[C_{i+1}]\to \R[C_i]$ is given by sending $x$ to $x$ and $Y_{i+1}$
to $Y_i (x-(i+1))$ then we have $$\R[C_{i+1}]=\R[C]^{s_c,\m_{i+1}}_{\R[C_i]}=\{ f\in\R[C_i]|\,\,{\rm
  for}\,\,i=1,\ldots,n,\,\,f(\m_{i+1}'\cap \R[C_i])=f(\m_{i+1}''\cap \R[C_i])\}.$$
We have illustrated Theorem \ref{structuralthm} by showing that
$\R[C]$ can be obtained from $\R[C']$ by $n$ successive non-trivial elementary
central gluings. It is clear that the number $n$ of elementary central
gluings is the lowest we can obtain in this case.
\end{ex}

\section{Central seminormalization and localization}

We may wonder if the processes of central seminormalization and localization commute
together. It is known to be true for geometric rings in the special case we take the central seminormalization in the standard integral closure (i.e $B=A'$) and moreover 
we only localize at a central
prime ideal \cite[Thm. 4.23]{FMQ-futur2}. The goal of this section is to show that it is true more generally.

An extension $A\to B$ of rings is called essentially of finite type if $B$ is a localization of $C$ with $A\to C$ an extension of finite type \cite[Def. 53.1]{STPalg}. A domain
$A$ is called Japanese if for any finite extension $\K(A)\to L$ of fields the integral closure of $A$ in $L$ is a finitely generated $A$-module \cite[Def. 159.1]{STPalg}.
A ring $A$ is a Nagata ring if $A$ is Noetherian and for any prime ideal $\p$ of $A$ then the domain $A/\p$ is Japanese \cite[Def. 160.1]{STPalg}.
As a representative example, a finitely generated algebra over a field is a Nagata ring \cite[Prop. 160.3]{STPalg}.

\begin{prop}
  \label{localization1}
Let $A\to B$ be an essentially of finite type extension of domains and assume $A$ is a
Nagata ring. Let $S$ be a
multiplicative closed subset of $A$. If $A$ is $s_c$-normal in $B$
then $S^{-1}A$ is $s_c$-normal in $S^{-1}B$.
\end{prop}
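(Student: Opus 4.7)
The plan is to reduce to the structural decomposition of Corollary \ref{corstructuralthmbis2} and then localize the decomposition term by term via Proposition \ref{prop7}. First I would dispatch the case where $A$ is not a real domain: by Theorem \ref{pb1}, $s_c$-normality of $A$ in $B$ is equivalent to $A$ being integrally closed in $B$. Since integral closure commutes with localization and $S^{-1}A$ is also not real (it shares its fraction field with $A$), Theorem \ref{pb1} applied once more yields $(S^{-1}A)^{s_c,*}_{S^{-1}B}=(S^{-1}A)'_{S^{-1}B}=S^{-1}A$, so $S^{-1}A$ is $s_c$-normal in $S^{-1}B$.

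For the main case I assume $A$ is a real domain, so that $S^{-1}A$ is also a real domain. By Proposition \ref{intermednorm} applied to $A\subset\widetilde{A}'_B\subset B$, the ring $A$ is $s_c$-normal in $\widetilde{A}'_B$. The Nagata hypothesis implies that $A'$ is a finite $A$-module. Since $\widetilde{A}'_B$ embeds into $A'$ (being integral over $A$ with fraction field equal to $\K(A)$), it is an $A$-submodule of a finite module over the Noetherian ring $A$, hence $A\to\widetilde{A}'_B$ is a finite and birational extension of real domains. Corollary \ref{corstructuralthmbis2} then produces a finite chain
\[ A = B_n \subset \cdots \subset B_1 \subset B_0 = \widetilde{A}'_B \]
in which each $B_{i+1}$ is an elementary central gluing of $B_i$ over some $\p_{i+1}\in\RCent A$.

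Now I would localize this chain by $S$. At each step, Proposition \ref{prop7} shows that $S^{-1}B_{i+1}$ is either equal to $S^{-1}B_i$ (when $S\cap\p_{i+1}\neq\emptyset$) or equal to the elementary central gluing $(S^{-1}A)^{s_c,S^{-1}\p_{i+1}}_{S^{-1}B_i}$. In both cases $S^{-1}B_{i+1}$ is $s_c$-normal in $S^{-1}B_i$, trivially or by Proposition \ref{prop5}. All the intermediate extensions remain integral and birational, so iterating Proposition \ref{transitivity} gives that $S^{-1}A$ is $s_c$-normal in $S^{-1}\widetilde{A}'_B$. To conclude, I would check that both the integral closure and the fiber product in Definition \ref{defbiratclos} commute with localization (which follows from $\K(S^{-1}A)=\K(A)$ and $\K(S^{-1}B)=\K(B)$), giving $S^{-1}\widetilde{A}'_B=\widetilde{(S^{-1}A)}'_{S^{-1}B}$. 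Then Theorem \ref{pb1} and Corollary \ref{pb1cor} yield
\[ (S^{-1}A)^{s_c,*}_{S^{-1}B}=(S^{-1}A)^{s_c}_{\widetilde{(S^{-1}A)}'_{S^{-1}B}}=(S^{-1}A)^{s_c,*}_{S^{-1}\widetilde{A}'_B}=S^{-1}A, \]
the last equality being the $s_c$-normality established above. I expect the main subtleties to be verifying that the birational closure commutes with localization (which requires identifying the fiber product correctly after inverting $S$) and carefully tracking the steps where $S\cap\p_{i+1}\neq\emptyset$, so that the collapsed ``trivial'' gluings still fit into the transitivity chain without breaking $s_c$-normality along the way.
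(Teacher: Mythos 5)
Your proof is correct and follows essentially the same strategy as the paper's: decompose the $s_c$-normal extension into a chain of elementary central gluings via the Traverso-type structural theorem, localize term by term using Proposition~\ref{prop7}, chain the pieces with Proposition~\ref{transitivity}, and finally transfer $s_c$-normality from $S^{-1}\widetilde{A}'_B$ to $S^{-1}B$ by the fact that the birational--integral closure commutes with localization. The only differences are organizational: the paper starts from $A'_B$ (citing the Nagata property via a Stacks Project lemma for its finiteness) and invokes Theorem~\ref{structuralthm} so that the birational gluing is built into the chain, whereas you start directly from $\widetilde{A}'_B$ (obtaining finiteness by embedding into $A'$) and use Corollary~\ref{corstructuralthmbis2}; and where the paper concludes by directly bounding any $s_c$-subintegral subring $D$ of $S^{-1}B$ inside $S^{-1}\widetilde{A}'_B$, you close the argument by identifying $\widetilde{(S^{-1}A)}'_{S^{-1}B}$ with $S^{-1}\widetilde{A}'_B$ and applying Theorem~\ref{pb1} and Corollary~\ref{pb1cor}. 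Your explicit separate treatment of the non-real case is also a slightly more careful rendering of a step the paper leaves implicit inside Theorem~\ref{structuralthm}.
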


\begin{proof}
Assume $A$ is $s_c$-normal in $B$. By Proposition \ref{intermednorm} $A$ is $s_c$-normal in the integral closure $A_B'$ of $A$ in $B$.
By \cite[Lem. 160.2]{STPalg} $A_B'$ is a finitely generated $A$-module.
By Theorem \ref{structuralthm} there is a finite
sequence $(B_i)_{i=0,\ldots,n}$ of real domains such that:
\begin{enumerate}
\item[1)] $A=B_n\subset \cdots\subset B_1\subset B_0=A_B'$.
\item[2)] $B_1=\widetilde{A}_B'$ is the birational gluing of $A_B'$ over $A$.
\item[3)] For $i\geq 1$, $B_{i+1}$ is the central gluing of $B_i$ over a central
  prime ideal of $A$.
\end{enumerate}
By Proposition \ref{prop7}, $\forall i\geq 1$, $S^{-1}B_{i+1}$ is
$s_c$-normal in $S^{-1}B_i$. By Proposition \ref{transitivity} it
follows that $S^{-1}A$ is $s_c$-normal in 
$S^{-1}B_1=S^{-1}(A_B'\times_{\K(A_B')} \K(A))=(S^{-1}A_B')\times_{\K(A_B')} \K(A)$. Let $S^{-1}A\to
D\to S^{-1}B$ be a sequence of extensions such that $S^{-1}A\to
D$ is $s_c$-subintegral (remark that $D=S^{-1}C$ for $C$ an
intermediate domain between $A$ and $B$). By \cite[Lem. 35.1]{STPalg} then $S^{-1}A_B'$ is the integral closure of $S^{-1}A$ in $S^{-1}B$.
Since a $s_c$-subintegral
extension is birational and integral then $D\subset (S^{-1}A_B')\times_{\K(A_B')} \K(A)$ and thus
$D=S^{-1}A$. It proves that $S^{-1}A$ is $s_c$-normal in $S^{-1}B$.
\end{proof}

Let $A\to B$ be an extension of domains and let $S$ be a
multiplicative closed subset of $A$. Since $A\to A^{s_c,*}_B$ is
$s_c$-subintegral then $S^{-1}A\to S^{-1}(A^{s_c,*}_B)$ is also
$s_c$-subintegral. By Definition \ref{defscnormal}, we get the
following integral extension of domains
\begin{equation}
  \label{equ3}
S^{-1}(A^{s_c,*}_B)\to (S^{-1}A)^{s_c,*}_{S^{-1}B}
\end{equation}
One problem is to know if the extension (\ref{equ3}) is an isomorphism.

\begin{thm} \label{localization2}
Let $A\to B$ be an essentially of finite type extension of domains and assume $A$ is a
Nagata ring. Let $S$ be a
multiplicative closed subset of $A$. Then
$$S^{-1}(A^{s_c,*}_B)= (S^{-1}A)^{s_c,*}_{S^{-1}B}.$$
\end{thm}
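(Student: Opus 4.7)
The plan is to establish the non-trivial inclusion $(S^{-1}A)^{s_c,*}_{S^{-1}B}\subset S^{-1}(A^{s_c,*}_B)$, since the reverse inclusion is already recorded in (\ref{equ3}). The strategy is to show that $S^{-1}(A^{s_c,*}_B)$ is itself $s_c$-normal in $S^{-1}B$ and then to invoke maximality of the central seminormalization.

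First I would verify that Proposition \ref{localization1} applies to the extension $A^{s_c,*}_B\to B$. Because $A$ is Nagata and $A\to B$ is essentially of finite type, the integral closure $A_B'$ is a finite $A$-module by \cite[Lem. 160.2]{STPalg} (as already used in the proof of Proposition \ref{localization1}); the intermediate ring $A^{s_c,*}_B\subset A_B'$ is therefore a finitely generated $A$-algebra, hence itself Nagata. Moreover $A^{s_c,*}_B\to B$ remains essentially of finite type, and $A^{s_c,*}_B$ is tautologically $s_c$-normal in $B$: any $s_c$-subintegral intermediate extension $A^{s_c,*}_B\to D\subset B$ would, composed with $A\to A^{s_c,*}_B$ via Lemma \ref{propcentsubint}, yield an $s_c$-subintegral extension from $A$ strictly containing $A^{s_c,*}_B$, contradicting Theorem \ref{pb1}. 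Proposition \ref{localization1} then yields that $S^{-1}(A^{s_c,*}_B)$ is $s_c$-normal in $S^{-1}B$.

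Second, I would apply Lemma \ref{propcentsubint} to the chain
$$S^{-1}A\to S^{-1}(A^{s_c,*}_B)\to (S^{-1}A)^{s_c,*}_{S^{-1}B}.$$
The full composition is $s_c$-subintegral by definition of the central seminormalization, so by the lemma both factors are $s_c$-subintegral; in particular $S^{-1}(A^{s_c,*}_B)\to (S^{-1}A)^{s_c,*}_{S^{-1}B}$ is $s_c$-subintegral. Combined with the $s_c$-normality of $S^{-1}(A^{s_c,*}_B)$ in $S^{-1}B$ established in the previous step, this forces $S^{-1}(A^{s_c,*}_B)=(S^{-1}A)^{s_c,*}_{S^{-1}B}$.

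The main obstacle lies in the first step: confirming that $A^{s_c,*}_B$ inherits the Nagata property from $A$ so that Proposition \ref{localization1} is actually available. This reduces to the standard fact that a finitely generated algebra over a Nagata ring is Nagata, which is consistent with the commutative algebra toolbox already invoked in the proof of Proposition \ref{localization1}. Once that is in hand, the structural decomposition theorem and the compatibility of elementary central gluings with localization (packaged inside Proposition \ref{localization1}), together with Lemma \ref{propcentsubint}, close the argument by a clean maximality squeeze.
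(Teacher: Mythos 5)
Your proof is correct and takes essentially the same route as the paper: both observe that Proposition \ref{localization1} applied to the $s_c$-normal extension $A^{s_c,*}_B\to B$ shows $S^{-1}(A^{s_c,*}_B)$ is $s_c$-normal in $S^{-1}B$, and then both combine (\ref{equ3}) with Lemma \ref{propcentsubint} and this $s_c$-normality to squeeze the equality. The one thing you add — verifying explicitly that $A^{s_c,*}_B$ is Nagata (as a finitely generated $A$-algebra over a Nagata ring) and that $A^{s_c,*}_B\to B$ remains essentially of finite type, so that Proposition \ref{localization1} is indeed applicable — is a detail the paper's proof glosses over, and your care there is well placed.
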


\begin{proof}
We already know that $S^{-1}(A^{s_c,*}_B)\subset
(S^{-1}A)^{s_c,*}_{S^{-1}B}$ by (\ref{equ3}). Since $S^{-1}A\to
(S^{-1}A)^{s_c,*}_{S^{-1}B}$ is $s_c$-subintegral then from Lemma
\ref{propcentsubint} it follows that $S^{-1}A\to S^{-1}(A^{s_c,*}_B)$
and $S^{-1}(A^{s_c,*}_B)\to (S^{-1}A)^{s_c,*}_{S^{-1}B}$ are both
$s_c$-subintegral. From Proposition \ref{localization1} then
$S^{-1}(A^{s_c,*}_B)$ is $s_c$-normal in $S^{-1}B$. From above
arguments the proof is done.
\end{proof}

In particular, we get:
\begin{cor} \label{localization2cor}
Let $A\to B$ be an essentially of finite type extension of domains and assume $A$ is a
Nagata ring. Let $\p\in\Sp A$. Then
$$(A^{s_c,*}_B)_{\p}= (A_{\p})^{s_c,*}_{B_{\p}}.$$
\end{cor}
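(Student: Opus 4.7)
The plan is to derive this corollary as an immediate specialization of Theorem \ref{localization2}. Given $\p\in\Sp A$, I set $S = A\setminus\p$, which is a multiplicatively closed subset of $A$ precisely because $\p$ is a prime ideal. The hypotheses of Theorem \ref{localization2} transfer directly: $A$ is still Nagata, and $A\to B$ is still essentially of finite type (this is independent of $S$).

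Once the theorem is applied, the proof reduces to recognizing standard identifications of localizations. By definition, $S^{-1}A = A_\p$ and $S^{-1}B = B_\p$ (where we view $S\subset A\subset B$ and perform the localization of $B$ as an $A$-module along $S$). Similarly, since $A^{s_c,*}_B$ is an intermediate domain between $A$ and $B$, the ring $S^{-1}(A^{s_c,*}_B)$ equals $(A^{s_c,*}_B)_\p$ under the same convention. Likewise, $(S^{-1}A)^{s_c,*}_{S^{-1}B}$ is exactly $(A_\p)^{s_c,*}_{B_\p}$. Substituting these into the equality given by Theorem \ref{localization2} yields the desired identity.

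I expect no real obstacle here: the content of the corollary is entirely carried by the theorem, and only notational translation is needed. If there is any subtlety to address, it is simply ensuring the reader that the localization notation $B_\p$ for a ring $B$ extending $A$ and a prime $\p$ of $A$ is unambiguous, which is consistent with the usage already present throughout the paper (for instance in Definition \ref{defgluingAB} and Proposition \ref{prop7}). Thus the proposed proof is a one-line deduction: \emph{Apply Theorem \ref{localization2} with $S=A\setminus\p$}.
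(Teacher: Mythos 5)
Your proof is correct and matches the paper's approach exactly: the corollary is stated immediately after Theorem \ref{localization2} with the words ``In particular, we get,'' i.e.\ it is precisely the specialization to $S = A\setminus\p$. Nothing more is needed.
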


From Corollary \ref{pb1cor} and Theorem \ref{localization2} we
generalize \cite[Thm. 4.23]{FMQ-futur2}.
\begin{cor} \label{corlocalization2}
Let $A\to B$ be a finite and birational extension of domains and assume $A$ is a Nagata ring. Let $S$ be a
multiplicative closed subset of $A$. Then
$$S^{-1}(A^{s_c}_B)= (S^{-1}A)^{s_c}_{S^{-1}B}.$$
\end{cor}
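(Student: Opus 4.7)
The plan is to combine Theorem \ref{localization2} with Corollary \ref{pb1cor}, which together make the statement essentially a direct translation. First I would note that a finite extension is in particular essentially of finite type, so the hypotheses of Theorem \ref{localization2} are satisfied by $A\to B$, yielding
$$S^{-1}(A^{s_c,*}_B) = (S^{-1}A)^{s_c,*}_{S^{-1}B}.$$
It then suffices to rewrite both sides as central gluings rather than as central seminormalizations.

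For the left-hand side this is immediate: since $A\to B$ is integral and birational, Corollary \ref{pb1cor} gives $A^{s_c,*}_B = A^{s_c}_B$, hence $S^{-1}(A^{s_c,*}_B) = S^{-1}(A^{s_c}_B)$. For the right-hand side I would verify that $S^{-1}A\to S^{-1}B$ is again integral and birational: integrality of a localized extension is standard (any integral relation for $b$ over $A$ localizes to an integral relation for $b/1$ over $S^{-1}A$), and birationality follows from $\K(S^{-1}A)=\K(A)=\K(B)=\K(S^{-1}B)$ using that $S^{-1}A\subset\K(A)$. A second application of Corollary \ref{pb1cor}, now to the extension $S^{-1}A\to S^{-1}B$, then gives $(S^{-1}A)^{s_c,*}_{S^{-1}B} = (S^{-1}A)^{s_c}_{S^{-1}B}$.

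Substituting both identifications into the equality provided by Theorem \ref{localization2} yields the desired conclusion. I do not expect a serious obstacle here: the real work was already carried out in establishing Theorem \ref{localization2} (itself resting on the structural decomposition Theorem \ref{structuralthm}) and in the existence result Theorem \ref{pb1} underlying Corollary \ref{pb1cor}. The only routine point to record is the stability of the property "integral and birational" under localization, which is part of the usual toolkit.
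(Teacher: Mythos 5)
Your proposal is correct and matches the paper's intended argument, which the paper summarizes only by the remark that the corollary follows "from Corollary \ref{pb1cor} and Theorem \ref{localization2}". You have filled in precisely the routine verifications (finite implies essentially of finite type, and the stability of "integral and birational" under localization so that Corollary \ref{pb1cor} can be applied on both sides) that the paper leaves implicit.
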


\begin{cor}
  \label{globallocal}
Let $A\to B$ be an essentially of finite type extension of domains and assume $A$ is a
Nagata ring.
We have $$A^{s_c,*}_B=\bigcap_{\p\in\Sp A} (A_{\p})^{s_c,*}_{B_{\p}}.$$
\end{cor}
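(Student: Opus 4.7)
The plan is to reduce the statement, via Corollary \ref{localization2cor}, to the classical fact that an integral domain is the intersection of its localizations at maximal ideals, and then to translate from an indexing by $\Max C$ to an indexing by $\Sp A$ via integrality of $A\to A^{s_c,*}_B$.

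First I would apply Corollary \ref{localization2cor}: for every $\p\in\Sp A$ one has $(A_{\p})^{s_c,*}_{B_{\p}}=(A^{s_c,*}_B)_{\p}$. Hence the right-hand side of the corollary rewrites as $\bigcap_{\p\in\Sp A}(A^{s_c,*}_B)_{\p}$, and it is enough to show
$$A^{s_c,*}_B\;=\;\bigcap_{\p\in\Sp A}(A^{s_c,*}_B)_{\p}.$$
Set $C:=A^{s_c,*}_B$. By construction $A\to C$ is $s_c$-subintegral, in particular integral, and $C$ is a domain (as an intermediate ring between $A$ and $B$). The inclusion $C\subset\bigcap_{\p}C_{\p}$ inside $\K(C)$ is immediate from the canonical maps $C\to C_{\p}$.

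For the reverse inclusion I would use the classical identity $C=\bigcap_{\m\in\Max C}C_{\m}$, valid for any integral domain. For each $\m\in\Max C$, put $\p:=\m\cap A\in\Sp A$. The inclusion of multiplicative sets $A\setminus\p\subset C\setminus\m$ yields, inside $\K(C)$, an inclusion of localizations $C_{\p}\subset C_{\m}$. Taking intersections and restricting to the cofinal family indexed by $\Max C$:
$$\bigcap_{\p\in\Sp A}C_{\p}\;\subset\;\bigcap_{\m\in\Max C}C_{\m\cap A}\;\subset\;\bigcap_{\m\in\Max C}C_{\m}\;=\;C,$$
which closes the argument.

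There is no real obstacle here: once Corollary \ref{localization2cor} is in hand, the proof is a purely formal sheaf-theoretic reindexing, the only subtlety being the passage from localizations at primes of $A$ to localizations at maximal ideals of $C$, which is handled by the inclusion of multiplicative sets made possible by integrality of $A\to C$.
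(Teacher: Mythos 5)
Your proposal is correct and follows essentially the same route as the paper: reduce via Corollary~\ref{localization2cor} to the equality $A^{s_c,*}_B=\bigcap_{\p\in\Sp A}(A^{s_c,*}_B)_{\p}$, which the paper asserts without elaboration and which you justify by reindexing through $\Max A^{s_c,*}_B$. One small remark: the inclusion of multiplicative sets $A\setminus(\m\cap A)\subset A^{s_c,*}_B\setminus\m$ holds for any ring extension and any prime $\m$; integrality of $A\to A^{s_c,*}_B$ is not actually needed at that step, so you may drop that invocation.
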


\begin{proof}
The proof follows from the equality $A^{s_c,*}_B=\bigcap_{\p\in\Sp A}
(A^{s_c,*}_{B})_{\p}$ and Corollary \ref{localization2cor}.
\end{proof}

\section{Central seminormalization of real algebraic varieties}

\subsection{Central seminormalization of affine real algebraic
  variety}
  
In this section, we focus on the existence problem of a central seminormalization of an affine real algebraic variety in another one. Let us introduce the problem. Let $Y\to X$ be a dominant morphism of finite type between two irreducible affine algebraic varieties over $\R$. Does there exists a unique real algebraic variety $Z$ such that $Y\to X$ factorizes through $Z$, satisfying the following property: for any irreducible affine algebraic variety $V$ over $\R$ such that $Y\to X$ factorizes through $V$ then $\pi:V\to X$ is centrally subintegral if and only if $Z\to X$ factorizes through $V$?

\begin{defn} Let $Y\to X$ be a dominant morphism between two affine real algebraic varieties over $\R$. We say that an affine algebraic variety $Z$ over $\R$ is intermediate between $X$ and $Y$ if $Y\to X$ factorizes through $Z$ or equivalently if $\R[Z]$ is intermediate between $\R[X]$ and $\R[Y]$ (by considering the associated ring extension $\R[X]\to\R[Y]$).
\end{defn}

We define the central seminormalization (or $s_c$-normalization) of $X$ in $Y$ as the variety which would give a solution to the problem posed here.
\begin{defn}
Let $Y\to X$ be a dominant morphism of finite type between two irreducible affine real algebraic varieties over $\R$. In case there exists a unique maximal element among the intermediate varieties $V$ between $X$ and $Y$ such that $V\to X$ is centrally subintegral then we denote it by $X_Y^{s_c,*}$ and we call it the central seminormalization or $s_c$-normalization of $X$ in $Y$. In case $Y=X'$ the normalization of $X$ then we omit $Y$ and we call $X^{s_c,*}$ the central seminormalization or $s_c$-normalization of $X$.
\end{defn}
  
We need the following:
\begin{lem}
  \label{lemintermed}
Let $\pi:Y\to X$ be a finite morphism between two irreducible affine
algebraic varieties over $\R$. Let $A$ be a ring such that
$\R[X]\subset A\subset \R[Y]$. Then $A$ is the coordinate ring of a unique irreducible
affine algebraic variety over $\R$ and $\pi$ factorizes through this
variety.
\end{lem}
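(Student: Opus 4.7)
The plan is to verify the three claims separately: that $A$ is a finitely generated $\R$-algebra, that $A$ is an integral domain, and that the factorization of $\pi$ is provided for free by the ring inclusions.

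First, I would handle finite generation. Since $\pi$ is finite, $\R[Y]$ is a finitely generated $\R[X]$-module. Because $\R[X]$ is a finitely generated $\R$-algebra it is noetherian (Hilbert basis theorem), and therefore $\R[Y]$ is a noetherian $\R[X]$-module. The submodule $A \subset \R[Y]$ is thus a finitely generated $\R[X]$-module, and since $\R[X]$ itself is a finitely generated $\R$-algebra, $A$ is a finitely generated $\R$-algebra as well. This already gives that $A = \R[V]$ for some affine algebraic variety $V$ over $\R$.

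Next, because $Y$ is irreducible, $\R[Y]$ is a domain, and hence its subring $A$ is a domain, so $V$ is irreducible. The chain of inclusions $\R[X]\subset A\subset\R[Y]$ induces, by the contravariant equivalence between affine algebraic varieties over $\R$ and finitely generated reduced $\R$-algebras (up to isomorphism), a chain of morphisms $Y\to V\to X$ whose composition is $\pi$. This is the desired factorization.

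For uniqueness, we rely on the convention adopted at the beginning of Section 4 that rings and affine algebraic varieties are considered up to isomorphism: two such varieties are identified once their coordinate rings are isomorphic as $\R$-algebras. Thus $V$ is unique up to isomorphism. The only step requiring a little attention is the first one, namely the transfer of the noetherian property from $\R[X]$ to the submodule $A$ of $\R[Y]$; everything else follows at once from the dictionary between irreducible affine varieties over $\R$ and finitely generated $\R$-algebra domains.
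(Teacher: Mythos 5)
Your proof is correct and follows essentially the same route as the paper's: both reduce the matter to observing that $\R[Y]$ is a noetherian $\R[X]$-module (finite module over a noetherian ring), so the submodule $A$ is a finite $\R[X]$-module, hence a finitely generated $\R$-algebra. Your version spells out the domain property and the factorization that the paper leaves implicit, but the core argument is identical.
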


\begin{proof}
Since $\R[Y]$ is a finite module over the noetherian ring $\R[X]$ then it is a noetherian $\R[X]$-module.
Thus the ring $A$ is a finite $\R[X]$-module as a submodule of a noetherian $\R[X]$-module. It follows that $A$ is a finitely generated algebra over $\R$ and the proof is done.
\end{proof}

Some finiteness results:
\begin{prop} 
\label{lemfiniteness} 
Let $Y\to X$ be a dominant morphism of finite type between two irreducible affine real algebraic varieties over $\R$. The integral closure $\R[X]_{\R[Y]}'$ and the birational and integral closure $\widetilde{\R[X]}_{\R[Y]}'$ of $\R[X]$ in $\R[Y]$ are finite $\R[X]$-modules.
\end{prop}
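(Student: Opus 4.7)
The plan is to establish the finiteness of $\R[X]_{\R[Y]}'$ first, and then deduce that of $\widetilde{\R[X]}_{\R[Y]}'$ by realising it as a submodule of a finite $\R[X]$-module over the Noetherian ring $\R[X]$.

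First, I would exploit that since $Y\to X$ is of finite type, $\R[Y]$ is a finitely generated $\R[X]$-algebra and therefore $\K(Y)$ is a finitely generated field extension of $\K(X)$. Let $L$ denote the relative algebraic closure of $\K(X)$ inside $\K(Y)$. A standard fact on finitely generated field extensions (obtained by comparing a transcendence basis of $\K(Y)/\K(X)$ with one of $\K(Y)/L$) ensures that $L/\K(X)$ is a finite extension. Every element of $\R[X]_{\R[Y]}'$ satisfies a monic polynomial over $\R[X]$ and is thus algebraic over $\K(X)$, so $\R[X]_{\R[Y]}' \subset L$ inside $\K(Y)$. Since $\R[X]$ is a finitely generated $\R$-algebra it is a Nagata ring (as already recalled in Section~7), so by the finiteness theorem for the integral closure in a finite extension of the fraction field (Noether's theorem \cite[Thm.~4.14]{Ei} together with the Nagata property, cf. \cite[Lem. 160.2]{STPalg}), the integral closure of $\R[X]$ in $L$ is a finite $\R[X]$-module. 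As $\R[X]_{\R[Y]}'$ is an $\R[X]$-submodule of this finite module over the Noetherian ring $\R[X]$, it is itself finite over $\R[X]$.

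For the birational and integral closure, Proposition \ref{commutclos} identifies $\widetilde{\R[X]}_{\R[Y]}' = \widetilde{\R[X]}_{\R[X]_{\R[Y]}'}$, which exhibits it as an intermediate ring between $\R[X]$ and the finite $\R[X]$-module $\R[X]_{\R[Y]}'$. Noetherianity of $\R[X]$ then yields at once that $\widetilde{\R[X]}_{\R[Y]}'$ is a finite $\R[X]$-module as well.

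The only non-routine ingredient is the elementary lemma that the relative algebraic closure of $\K(X)$ inside a finitely generated field extension is again a finite extension; this provides the reduction to the classical finiteness theorem, and the rest of the argument is routine.
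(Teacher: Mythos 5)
Your proof is correct. The paper disposes of the first assertion in one line, by quoting that $\R[X]$ is a Nagata domain (\cite[Prop.~160.16]{STPalg}) and then applying the Stacks Project lemma \cite[Lem.~160.2]{STPalg}, which asserts directly that the integral closure of a Nagata ring in a reduced finite-type algebra is module-finite; it then handles $\widetilde{\R[X]}_{\R[Y]}'$ exactly as you do, as a submodule of the finite module $\R[X]_{\R[Y]}'$ over the Noetherian ring $\R[X]$ (via Lemma~\ref{lemintermed}). You instead unwind that citation: you observe that $\R[X]_{\R[Y]}'$ lands in the relative algebraic closure $L$ of $\K(X)$ in $\K(Y)$, that $L/\K(X)$ is finite because subextensions of a finitely generated field extension are finitely generated (with the linear-disjointness argument via a transcendence basis), and then invoke the finiteness of integral closure of a Nagata domain inside a finite field extension. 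This is a genuine, if modest, difference in route: the paper black-boxes the whole finite-type-algebra statement, whereas you reduce to the purely field-theoretic core and then cite only the finite-extension version of the Nagata/Noether finiteness theorem. Your reduction is more self-contained and makes visible exactly where finiteness of $\K(Y)/\K(X)$ enters; the paper's is shorter at the cost of relying on the stronger form of the Stacks lemma. Both are equally valid.
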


\begin{proof}
By  \cite[Prop. 160.16]{STPalg} coordinate rings of irreducible affine real algebraic varieties over $\R$ are Nagata domains and thus $\R[X]_{\R[Y]}'$ is a finite $\R[X]$-module by \cite[Prop. 160.2]{STPalg}. The finiteness of $\widetilde{\R[X]}_{\R[Y]}'$ as a $\R[X]$-module is a consequence of Lemma \ref{lemintermed}.
\end{proof}

From the above proposition, we can define the normalization and the birational normalization of a variety in another one.
\begin{defn} \label{defnorm+biratnorminanother}
Let $Y\to X$ be a dominant morphism of finite type between two irreducible affine real algebraic varieties over $\R$. 
\begin{enumerate}
\item[1)] The variety with coordinate ring $\R[X]_{\R[Y]}'$ is called the normalization of 
$X$ in $Y$ and is denoted by $X_Y'$.
\item[2)] The variety with coordinate ring $\widetilde{\R[X]}_{\R[Y]}'$ is called the birational normalization of 
$X$ in $Y$ and is denoted by $\widetilde{X}_Y'$.
\end{enumerate}
\end{defn}

We prove now the existence of a central seminormalization of an affine real algebraic variety in another one.
\begin{thm} \label{thmCSEPvariety}
Let $Y\to X$ be a dominant morphism of finite type between two irreducible affine real algebraic varieties over $\R$. Then, the central seminormalization $X^{s_c,*}_Y$ of $X$ in $Y$ exists and its coordinate ring is the central seminormalization $\R[X]^{s_c,*}_{\R[Y]}$ of $\R[X]$ in $\R[Y]$, namely
$$\R[X^{s_c,*}_Y]=\R[X]^{s_c,*}_{\R[Y]}$$
\end{thm}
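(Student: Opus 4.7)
The plan is to take $X^{s_c,*}_Y$ to be the irreducible affine algebraic variety whose coordinate ring is the ring-theoretic central seminormalization $\R[X]^{s_c,*}_{\R[Y]}$ furnished by Theorem \ref{pb1}, and then to verify that this variety satisfies the universal property in the definition of the central seminormalization of $X$ in $Y$. The three items to check are: that $\R[X]^{s_c,*}_{\R[Y]}$ is a finitely generated $\R$-algebra (so that it is the coordinate ring of some affine variety), that the resulting variety is intermediate between $X$ and $Y$, and that it is the unique maximal such intermediate variety equipped with an $s_c$-subintegral morphism to $X$.

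The principal technical obstacle is the finiteness of $\R[X]^{s_c,*}_{\R[Y]}$ over $\R[X]$; the remaining verifications are essentially formal. I would split according to the dichotomy in Theorem \ref{pb1}. If $\R[X]$ is not a real domain then $\R[X]^{s_c,*}_{\R[Y]} = \R[X]_{\R[Y]}'$, which is a finite $\R[X]$-module by Proposition \ref{lemfiniteness}. If $\R[X]$ is a real domain then $\R[X]^{s_c,*}_{\R[Y]} \subset \widetilde{\R[X]}_{\R[Y]}'$, and the latter is a finite $\R[X]$-module by the same proposition; since $\R[X]$ is noetherian, every submodule of a finitely generated $\R[X]$-module is again finitely generated, so $\R[X]^{s_c,*}_{\R[Y]}$ is finite over $\R[X]$, hence a finitely generated $\R$-algebra.

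Because $\R[X]^{s_c,*}_{\R[Y]}$ is a subring of the domain $\R[Y]$ it is itself a domain, and by the previous paragraph it is the coordinate ring of a unique irreducible affine algebraic variety $Z$ over $\R$. The inclusions $\R[X] \subset \R[X]^{s_c,*}_{\R[Y]} \subset \R[Y]$ translate into morphisms $Y\to Z\to X$ whose composition is the given morphism $Y\to X$, so $Z$ is intermediate between $X$ and $Y$; moreover Theorem \ref{pb1} asserts that $\R[X]\to \R[X]^{s_c,*}_{\R[Y]}$ is $s_c$-subintegral, so $Z\to X$ is $s_c$-subintegral as a morphism of varieties.

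Finally, for the universal property, let $V$ be any intermediate irreducible affine algebraic variety between $X$ and $Y$ such that $V\to X$ is $s_c$-subintegral. At the ring level this gives an intermediate extension $\R[X] \subset \R[V] \subset \R[Y]$ with $\R[X]\to\R[V]$ $s_c$-subintegral, so by the maximality clause in Theorem \ref{pb1} we have $\R[V] \subset \R[X]^{s_c,*}_{\R[Y]} = \R[Z]$. Geometrically this is exactly the factorization of $Y\to V$ through $Z$ required by the definition of $X^{s_c,*}_Y$. Uniqueness of $X^{s_c,*}_Y$ and the identity $\R[X^{s_c,*}_Y]=\R[X]^{s_c,*}_{\R[Y]}$ follow immediately.
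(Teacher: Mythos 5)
Your proposal is correct and follows essentially the same route as the paper: identify the candidate variety with the ring $\R[X]^{s_c,*}_{\R[Y]}$ from Theorem \ref{pb1}, reduce the whole matter to its finite generation over $\R$, and obtain that by the case split (real domain or not) together with the finiteness of $\R[X]_{\R[Y]}'$ and $\widetilde{\R[X]}_{\R[Y]}'$ from Proposition \ref{lemfiniteness} and noetherianity. The paper packages the last step as an appeal to Lemma \ref{lemintermed} and leaves the universal-property check implicit, whereas you spell both out directly, but the argument is the same.
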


\begin{proof}
Assume $X_{reg}(\R)=\emptyset$. By Theorem \ref{pb1}, Propositions \ref{existcentralideal} and \ref{lemfiniteness} then the theorem is proved in that case
and we get $X^{s_c,*}_Y=X_Y'$.

Assume $X_{reg}(\R)\not=\emptyset$ i.e $\R[X]$ is a real domain (Proposition \ref{existcentralideal}). By Theorems \ref{scgeom} and \ref{pb1}, we have to prove that $\R[X]^{s_c,*}_{\R[Y]}$ is a finitely generated algebra over $\R$. By Theorem \ref{pb1}, we have 
$$\R[X]^{s_c,*}_{\R[Y]}=\R[X]^{s_c}_{\widetilde{\R[X]}_{\R[Y]}'}=\R[X]^{s_c}_{\R[\widetilde{X}_Y']}$$
By Lemma \ref{lemfiniteness}, the morphism $\widetilde{X}_Y'\to X$ is finite and thus by Lemma \ref{lemintermed} we get the proof.
\end{proof}

Similarly to the classical normalization, the central normalization is a geometric process associated to an algebraic integral closure. It generalizes \cite[Thm. 4.16]{FMQ-futur2}.
\begin{thm}
\label{thmintclosregulu}
Let $Y\to X$ be a dominant morphism of finite type between two irreducible affine real algebraic varieties over $\R$. 
\begin{enumerate}
\item[1)] If $X_{reg}(\R)\not=\emptyset$ then $\R[X^{s_c,*}_Y]$ is the integral closure of $\R[X]$ in 
$\SRR(\Cent X)\times_{\K(Y)}\R[Y]$.
\item[2)] If $X_{reg}(\R)=\emptyset$ then $\R[X^{s_c,*}_Y]$ is the integral closure of $\R[X]$ in 
$\R[Y]$.
\end{enumerate}
\end{thm}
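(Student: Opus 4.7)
Part 2) is essentially tautological: if $X_{reg}(\R)=\emptyset$ then by Proposition \ref{existcentralideal} $\R[X]$ is not a real domain, and Theorem \ref{pb1}(2) yields $\R[X]^{s_c,*}_{\R[Y]}=\R[X]'_{\R[Y]}$, which is the integral closure of $\R[X]$ in $\R[Y]$ by Definition \ref{defnorm+biratnorminanother}. The substance is part 1).

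For part 1), set $A=\R[X]^{s_c,*}_{\R[Y]}$ and $R=\SRR(\Cent X)\times_{\K(Y)}\R[Y]$, and let $R'$ denote the integral closure of $\R[X]$ in $R$. The plan is to prove $A=R'$ by double inclusion, with the equivalence $(1)\Leftrightarrow(5)$ in Theorem \ref{scgeom} powering both directions. For the forward inclusion $A\subseteq R'$, take $g\in A$. The extension $\R[X]\to A$ is $s_c$-subintegral (Theorem \ref{thmCSEPvariety}), finite (Proposition \ref{lemfiniteness}), and birational (Remark \ref{remscbirat}), so applying $(1)\Rightarrow(5)$ of Theorem \ref{scgeom} to the morphism $X^{s_c,*}_Y\to X$ furnishes $f\in\SRR(\Cent X)$ with $f\circ\pi_{|\Cent X^{s_c,*}_Y}=g$ on $\Cent X^{s_c,*}_Y$. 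Identifying $\K(X)=\K(A)$ via birationality, $f$ and $g$ become elements of $\K(A)$ agreeing on the Zariski dense subset $\Cent X^{s_c,*}_Y$ of $X^{s_c,*}_Y$ (density from Proposition \ref{existcentralideal}, since $A$ is real by Remark \ref{remscbirat}). Hence $f=g$ in $\K(A)\subset\K(Y)$, so $(f,g)\in R$; and $g\in R'$ since $A$ is integral over $\R[X]$.

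For the reverse inclusion $R'\subseteq A$, take $g\in\R[Y]$ integral over $\R[X]$ together with $f\in\SRR(\Cent X)$ realising $f=g$ in $\K(Y)$. I would form the intermediate ring $\R[X][g]\subset\R[Y]$, which is finite (by integrality of $g$) and birational (since $g=f\in\K(X)$) over $\R[X]$, hence by Lemma \ref{lemintermed} the coordinate ring of some intermediate variety $Z$. The plan is to verify criterion $(5)$ of Theorem \ref{scgeom} for $Z\to X$, which will make $\R[X]\to\R[X][g]$ $s_c$-subintegral and hence contained in $A$ by maximality of $A$, giving $g\in A$. Any $g'=P(g)\in\R[X][g]$ with $P\in\R[X][T]$ satisfies $g'=P(f)$ in $\K(Y)$, and $P(f)\in\SRR(\Cent X)$ since this ring is an $\R[X]$-algebra; noting that $\pi_{|\Cent Z}$ lands in $\Cent X$ by Proposition \ref{lying-over}, the fraction-field equality combined with continuity of both sides on $\Cent Z$ and Zariski density of the open locus where $P(f)$ is algebraically regular then yields the pointwise equality on $\Cent Z$ demanded by $(5)$.

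The step I expect to be the main obstacle is this density-plus-continuity bridge used in both directions to pass between equality in the fraction field and pointwise equality on the central locus. Securing it rests on Zariski density of the central locus in the intermediate real domains $A$ and $\R[X][g]$ (via Proposition \ref{existcentralideal}, applicable because both are birational, hence real, over the real domain $\R[X]$) together with continuity of regulous functions on the central locus; once that is in hand, the two inclusions combine to give the stated equality.
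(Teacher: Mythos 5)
Your proposal is correct and takes essentially the same route as the paper: both parts hinge on Theorem \ref{thmCSEPvariety}, the equivalence $(1)\Leftrightarrow(5)$ of Theorem \ref{scgeom}, Lemma \ref{lemintermed}, and a double inclusion, and both implicitly rely on the same density-plus-continuity bridge between pointwise agreement on $\Cent X^{s_c,*}_Y$ and equality in $\K(X)$ that you flag at the end. The only cosmetic difference is that the paper first identifies the integral closure of $\R[X]$ in $\SRR(\Cent X)\times_{\K(Y)}\R[Y]$ as $\SRR(\Cent X)\times_{\K(X)}\R[\widetilde{X}_Y']$ via a commutative diagram and then handles the reverse inclusion in one stroke (the whole ring $B$ is a finitely generated $\R[X]$-subalgebra of $\SRR(\Cent X)$, so $(5)\Rightarrow(1)$ applies to $\Sp B\to X$), whereas you argue element by element through $\R[X][g]$; the conclusions coincide.
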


\begin{proof}
Assume $X_{reg}(\R)=\emptyset$.  Looking at the proof of Theorem \ref{thmCSEPvariety} then we get 2).

Assume $X_{reg}(\R)\not=\emptyset$. From the proof of Theorem \ref{thmCSEPvariety} then we get 
$$\R[X^{s_c,*}_{Y}]=\R[X]^{s_c}_{\R[\widetilde{X}_Y']}$$
From the following commutative diagram
$$\begin{array}{ccccccc}
	\R[X]&\rightarrow &\R[\tilde{X}_Y']&\rightarrow & \R[X_Y']&\rightarrow &\R[Y]\\
	\downarrow&&\downarrow&&&&\\
	\SRR(\Cent X)&\rightarrow &\SRR(\Cent\widetilde{X}_Y') & &\downarrow
	&&\downarrow \\
	\downarrow& &\downarrow&&&&\\
	\K(X)&\simeq&\K(\widetilde{X}_Y')& \rightarrow&\K(X_Y')& \rightarrow &\K(Y)\\
\end{array}$$
we see that the integral closure of $\R[X]$ in $\SRR(\Cent X)\times_{\K(Y)}\R[Y]$ is $\SRR(\Cent X)\times_{\K(X)}\R[\widetilde{X}_Y']$ and we denote this latest domain by $B$. Let $g\in \R[X^{s_c,*}_Y]$.  Let $\pi $ denote the morphism $X^{s_c,*}_Y\to X$.
We have $g\in\R[\widetilde{X}_Y']$. By Theorem \ref{scgeom} there exists $f\in \SRR(\Cent X)$ such that $f\circ \pi=g$ on $\Cent X^{s_c,*}_Y$. It follows that $g\in B$.

By Lemma \ref{lemintermed} then $B=\R[Z]$ for an irreducible affine algebraic variety over $\R$ and we get a finite birational morphism $Z\to X$ factorizing $Y\to X$.
Since $B\subset \SRR(\Cent X)$ then by 5) of Theorem \ref{scgeom} then $\R[X]\to B$ is $s_c$-subintegral and thus $B\subset \R[X^{s_c,*}_Y]$.
\end{proof}

\subsection{Central seminormalization of real schemes}

In this section, we prove the existence of a central seminormalization of a real scheme in another one. It can be seen as a real or central version of Andreotti and Bombieri's construction of the classical seminormalization of a scheme in another one \cite{AB}.
  
\subsubsection{Central locus of a scheme over $\R$}

Let $X=(X,\SO_X)$ be an integral scheme of finite type over $\R$ with field of rational functions $\K(X)$. We say that $x\in X$ is real if the residue field $k(x)$ is a real field. By \cite[Prop. 6.4.2]{Gr1} the residue field at a closed point of $X$ is $\R$ or $\C$, consequently the residue field at a real closed point is the field of real numbers. We denote by $X(\R)$ (resp. $X_{reg}(\R)$) the set of (resp. smooth) real closed points of $X$. We denote by $\eta$ the generic point of $X$, we have $k(\eta)=\K(X)$.  Note that if $U=\Sp A$ is a non-empty affine open subset of $X$ then $U$ is (Zariski) dense in $X$ and $\eta$ is also the generic point of $U$ i.e $A$ is a domain. 
We say that $x\in X$ is central if there exists an affine neighborhood $U=\Sp A$ of $x$ such that $x\in\RCent A$ seeing $x$ as a prime ideal of $A$. We denote
by $\RCent \SO_X$ the set of central points of $X$. By Proposition \ref{existcentralideal} then $\eta$ is central if and only if $\K(X)$ is a real field if and only if $X_{reg}(\R)\not=\emptyset$.
We denote by $\Cent X$ the set of central closed points of $X$, by Theorem \ref{centralNull} and the definition we get $\Cent X= \overline{X_{reg}(\R)}^E$ with $ \overline{X_{reg}(\R)}^E$ denoting the euclidean closure of the set of smooth real closed points.

From Propositions \ref{contractcentral} and \ref{lying-over}, we get:
\begin{prop} \label{lying-overschemes}
Let $\pi:Y\to X$ be a dominant morphism between integral and finitely generated schemes over $\R$. Then $\pi(\RCent \SO_Y)\subset\RCent \SO_X$. If $\pi$ is finite and birational then 
$\pi(\RCent \SO_Y)=\RCent \SO_X$.
\end{prop}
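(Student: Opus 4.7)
My plan is to reduce both assertions to the corresponding affine results, namely Propositions \ref{contractcentral} and \ref{lying-over}, by choosing affine covers compatible with $\pi$. Before starting, I would record the localization compatibility of centrality: if $A$ is a domain, $\p\in\RCent A$, and $S\subset A$ is a multiplicative subset disjoint from $\p$, then $S^{-1}\p\in\RCent S^{-1}A$ (multiplying an identity $a^2/s^2+b\in S^{-1}\p$ by $t^2$ for $t\in S$ clears denominators while $(ts)^2 b$ stays inside $\sum\K(A)^2=\sum\K(S^{-1}A)^2$, so centrality of $\p$ yields $ta\in\p$ and hence $a/s\in S^{-1}\p$), and conversely by pulling back along $A\hookrightarrow S^{-1}A$. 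As a consequence, whether a point of a scheme is central does not depend on the affine open used to test it, and the property passes to any smaller affine principal open containing the point.

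For the inclusion $\pi(\RCent\SO_Y)\subset\RCent\SO_X$, I would pick $y\in\RCent\SO_Y$, set $x=\pi(y)$, choose an affine open $U=\Sp A\subset X$ containing $x$, and then an affine open $V=\Sp B\subset \pi^{-1}(U)$ containing $y$. Since $X$ and $Y$ are integral and $\pi$ is dominant, the generic point of $Y$ maps to the generic point of $X$, so the induced ring morphism $A\to B$ is an injection of domains. By the localization invariance above, the prime $\q\subset B$ corresponding to $y$ lies in $\RCent B$, and Proposition \ref{contractcentral} gives $\q\cap A\in\RCent A$, which represents $x$; hence $x\in\RCent\SO_X$.

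For the surjectivity statement, assume $\pi$ is finite and birational and let $x\in\RCent\SO_X$. I would pick an affine $U=\Sp A$ of $X$ containing $x$ such that the prime $\p$ of $A$ corresponding to $x$ lies in $\RCent A$. Since $\pi$ is finite (hence affine), $V:=\pi^{-1}(U)=\Sp B$ is affine and $A\to B$ is finite. Birationality of $\pi$ identifies $\K(X)=\K(A)$ with $\K(Y)=\K(B)$ through $\pi^*$, so $A\to B$ is also birational. Applying 3) of Proposition \ref{lying-over} produces a $\q\in\RCent B$ lying over $\p$, and this $\q$ corresponds to a point $y\in V\subset Y$ with $y\in\RCent\SO_Y$ and $\pi(y)=x$.

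The main piece of bookkeeping is the invariance of centrality under change of affine chart; once that observation is granted, the proof amounts to a direct translation of the affine statements (Propositions \ref{contractcentral} and \ref{lying-over}) into the scheme-theoretic framework.
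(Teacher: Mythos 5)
Your proposal is correct and matches the paper's intended route: the paper offers no explicit argument for this proposition, only stating that it follows from Propositions \ref{contractcentral} and \ref{lying-over}, and your chart-independence lemma is precisely the bookkeeping needed to make that reduction rigorous (the paper later records the equivalent fact $x\in\RCent\SO_X\Leftrightarrow\m_x\in\RCent\SO_{X,x}$, which plays the same role). One small slip in the localization computation: writing $a^2/s^2+b=p/u$ with $p\in\p$, $u\in S$ and multiplying by $(su)^2$ gives $(ua)^2+(su)^2b\in\p$, so centrality of $\p$ yields $ua\in\p$ and then $a\in\p$ since $u\notin\p$; the factors $t^2$ versus $(ts)^2$ in your sketch do not quite line up, though the idea is the right one.
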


\subsubsection{Normalization and birational normalization of a scheme in another one}
\label{subsecNandB}

Let $\pi:Y\to X$ be a dominant morphism of finite type between integral schemes of finite type over $\R$. The integral closure $(\SO_X)_{\pi_*(\SO_Y)}'$ of $\SO_X$ in $\pi_*(\SO_Y)$ is a coherent sheaf \cite[Lem. 52.15]{STPmorph} and by 
\cite[II Prop. 1.3.1]{Gr2} it is the structural sheaf of a scheme denoted $X_Y'$ called the normalization of $X$ in $Y$. We get a finite morphism $\pi':X_Y'\to X$ factorizing $\pi$.
If $U=\Sp A\subset X$ then $H^0(\pi'^{-1} (U),\SO_{X_Y'})$ is the integral closure of $H^0(U,\SO_X)=A$ in $H^0(\pi^{-1}(U),\SO_Y)$. If $Y=\Sp \K(X)$ then we simply denote $X_Y'$ by $X'$ and we call it the normalization of $X$. We have the universal property that any finite morphism $Z\to X$, with $Z$ an integral scheme over $\R$, factorizing $\pi$ factorizes $\pi'$.

From definition the birational and integral closure $\widetilde{\SO_X}_{\pi_*(\SO_Y)}'$ of $\SO_X$ in $\pi_*(\SO_Y)$ is a quasi-coherent sheaf. Since $\SO_{X_Y'}$ is coherent then it follows from Lemma \ref{lemfiniteness} that $\widetilde{\SO_X}_{\pi_*(\SO_Y)}'$ is also coherent. By \cite[Prop. 1.3.1]{Gr2} it is the structural sheaf of a scheme denoted $\widetilde{X}_Y'$ called the birational normalization of $X$ in $Y$. We have the universal property that any finite and birational morphism $Z\to X$, with $Z$ an integral scheme over $\R$, factorizing $\pi$ factorizes $\widetilde{X}_Y'\to X$.

\subsubsection{Central gluing of a scheme over $\R$ over another one}

Let $X$ be an integral scheme of finite type over $\R$. 
For $x\in X$, we denote by $\m_x$ the maximal ideal of the local ring $\SO_{X,x}$. Since $\K(\SO_{X,x})=\K(X)$ then it follows directly from Definition \ref{defidealcentral} that:
\begin{prop}
Let $x\in X$. Then $x\in\RCent \SO_X$ if and only if $\m_x\in\RCent \SO_{X,x}$.
\end{prop}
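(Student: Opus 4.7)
The plan is to translate the claim into a purely ring-theoretic statement about a prime ideal and its localization, and then verify both implications by clearing denominators. Pick an affine open neighborhood $U=\Sp A$ of $x$, so that $x$ corresponds to a prime ideal $\p\in\Sp A$ with $\SO_{X,x}=A_\p$ and $\m_x=\p A_\p$. Since $U$ is Zariski dense in $X$, we have $\K(A)=\K(A_\p)=\K(X)$, so the sets of sums of squares $\sum\K(A)^2$ and $\sum\K(A_\p)^2$ used in Definition \ref{defidealcentral} are literally the same. The proposition therefore reduces to: $\p\in\RCent A$ if and only if $\p A_\p\in\RCent A_\p$. A standard fact I shall use throughout is $\p A_\p\cap A=\p$.

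For the forward direction, assume $\p$ is central in $A$. Take $\alpha\in A_\p$ and $b\in\sum\K(X)^2$ with $\alpha^2+b\in\p A_\p$. Write $\alpha=a/s$ with $a\in A$, $s\in A\setminus\p$, and multiply through by $s^2$ to get $a^2+s^2b\in\p A_\p\cap A=\p$. Since $s^2b$ is still a sum of squares in $\K(X)$ (if $b=\sum c_i^2$ then $s^2b=\sum(sc_i)^2$), centrality of $\p$ gives $a\in\p$, hence $\alpha=a/s\in\p A_\p$.

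For the reverse direction, assume $\p A_\p$ is central in $A_\p$. Given $a\in A$ and $b\in\sum\K(X)^2$ with $a^2+b\in\p$, we have $a^2+b\in\p A_\p$. Centrality of $\p A_\p$ yields $a\in\p A_\p$, and using $\p A_\p\cap A=\p$ we conclude $a\in\p$, so $\p$ is central in $A$.

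The argument is essentially routine, so the only real issue is conceptual rather than technical: we must check that the notion of central point of $X$ does not depend on the chosen affine neighborhood. This follows immediately from the equivalence above, since the right-hand condition $\m_x\in\RCent\SO_{X,x}$ is intrinsic to the stalk; thus if $x\in\RCent A$ for one affine $U=\Sp A$ containing $x$, the same holds for every affine neighborhood of $x$. This justifies a posteriori the definition given just before the proposition, which is really the main point of stating the lemma.
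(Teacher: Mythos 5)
Your translation to the ring-theoretic statement $\p\in\RCent A$ iff $\p A_\p\in\RCent A_\p$, the observation that $\K(A)=\K(A_\p)=\K(X)$ makes the relevant sets of sums of squares coincide, and the a posteriori remark on independence of the affine chart are exactly the content the paper compresses into the phrase ``follows directly from Definition \ref{defidealcentral}''. The reverse implication is correct as written.

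The forward implication has a gap: after writing $\alpha=a/s$ and multiplying by $s^2$ you assert $a^2+s^2b\in\p A_\p\cap A=\p$, but nothing guarantees $a^2+s^2b\in A$. Indeed $a^2+s^2b=s^2(\alpha^2+b)$, and $\alpha^2+b$ is only known to lie in $\p A_\p\subset A_\p$; its denominator in $A_\p$ need not be a power of $s$. For instance with $A=\R[x]$, $\p=(x)$, $\alpha=0$, $s=1$, $b=\left(x/(x+1)\right)^2\in\p A_\p$, one gets $a^2+s^2b=x^2/(x+1)^2\notin A$, so the claimed membership in $\p A_\p\cap A$ fails. Without $a^2+s^2b\in A$ the hypothesis of Definition \ref{defidealcentral} for the ideal $\p\subset A$ is not satisfied and centrality of $\p$ cannot be invoked. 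The repair is to clear the remaining denominator: write $\alpha^2+b=p/t$ with $p\in\p$ and $t\in A\setminus\p$, and multiply by $(st)^2$ instead of $s^2$ to get $(ta)^2+(ts)^2b=s^2tp$, which is an honest element of $\p\subset A$ with $(ts)^2b\in\sum\K(X)^2$; centrality of $\p$ then gives $ta\in\p$, and since $t\notin\p$ and $\p$ is prime you conclude $a\in\p$, hence $\alpha=a/s\in\p A_\p$ as desired.
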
 

We define an $\SO_X$-algebra that corresponds to the central simultaneous gluings.
\begin{defn} \label{defcentralgluingscheme}
Let $\pi:Y\to X$ be a finite morphism with $Y$ an integral scheme over $\R$. The central gluing of $\pi_*(\SO_Y)$ over $\SO_X$ is the $\SO_X$-subalgebra of $\pi_*(\SO_Y)$, denoted by $(\SO_X)^{s_c}_Y$, whose sections $f\in H^0(U,(\SO_X)^{s_c}_Y)$ on an open subset $U$ of $X$ are the $f\in H^0(U,\pi_*(\SO_Y))$ such that for any $x$ in $U$ then
$$f_x\in \SO_{X,x}+\JRadCe(\pi_*(\SO_Y))_x$$
\end{defn}

Remark that if $x\not\in \RCent \SO_X$ then $\SO_{X,x}+\JRadCe(\pi_*(\SO_Y))_x=(\pi_*(\SO_Y))_x$.

\begin{prop} \label{sheaf}
Let $\pi:Y\to X$ be a finite morphism with $Y$ an integral scheme over $\R$. Then $(\SO_X)^{s_c}_Y$ the central gluing of $\pi_*(\SO_Y)$ over $\SO_X$ is a sheaf.
\end{prop}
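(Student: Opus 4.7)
The plan is to exploit the fact that the defining condition for $(\SO_X)^{s_c}_Y$ is stalk-local, which will reduce the sheaf property entirely to the corresponding property of the ambient sheaf $\pi_*(\SO_Y)$. First I would record the elementary algebraic fact that for each point $x\in X$, the set $\SO_{X,x}+\JRadCe((\pi_*\SO_Y)_x)$ is an $\SO_{X,x}$-subalgebra of $(\pi_*\SO_Y)_x$ (the sum of a subring and an ideal), so that on each open $U\subset X$ the sections of $(\SO_X)^{s_c}_Y$ really do form an $\SO_X(U)$-subalgebra of $H^0(U,\pi_*\SO_Y)$.

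Next I would check that the restriction maps inherited from $\pi_*(\SO_Y)$ are well-defined on this subpresheaf. If $V\subset U$ are open, $f\in H^0(U,(\SO_X)^{s_c}_Y)$ and $x\in V$, then the stalk $(f|_V)_x$ equals $f_x$ inside $(\pi_*\SO_Y)_x$; so the condition $f_x\in \SO_{X,x}+\JRadCe((\pi_*\SO_Y)_x)$ passes to $V$. This makes $(\SO_X)^{s_c}_Y$ a subpresheaf of the sheaf $\pi_*(\SO_Y)$.

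Separation is then immediate: if $\{U_i\}$ covers $U$ and $f,g\in H^0(U,(\SO_X)^{s_c}_Y)$ agree on each $U_i$, then they agree in $H^0(U,\pi_*\SO_Y)$ by the sheaf property of $\pi_*(\SO_Y)$, hence they agree as elements of the subpresheaf. For the gluing axiom, given sections $f_i\in H^0(U_i,(\SO_X)^{s_c}_Y)$ that agree on pairwise intersections, I use the sheaf axiom of $\pi_*(\SO_Y)$ to produce a unique $f\in H^0(U,\pi_*\SO_Y)$ restricting to each $f_i$. To conclude that $f$ is in the subpresheaf, I pick any $x\in U$, choose $i$ with $x\in U_i$, and observe that $f_x=(f_i)_x\in \SO_{X,x}+\JRadCe((\pi_*\SO_Y)_x)$ by the hypothesis on $f_i$. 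This holds at every $x\in U$, so $f\in H^0(U,(\SO_X)^{s_c}_Y)$.

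The argument is formal and has no real obstacle: the only substantive input is that the defining condition on a section is prescribed stalkwise, so the subpresheaf of $\pi_*(\SO_Y)$ cut out by a stalk-local condition is automatically a sheaf. If anything requires a brief verification, it is the algebraic check in the first paragraph that $\SO_{X,x}+\JRadCe((\pi_*\SO_Y)_x)$ is closed under the ring operations, which uses only that $\JRadCe$ is an ideal and that the left summand is a subring.
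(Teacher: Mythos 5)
Your proof is correct and is essentially the same argument the paper gives, just written out in full: the paper merely notes that the presheaf property is clear from the definition and that the sheaf axioms then follow because $\pi_*(\SO_Y)$ is a sheaf, which is precisely the "stalk-local condition cuts out a subsheaf of a sheaf" observation you make explicit.
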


\begin{proof}
From Definition \ref{defcentralgluingscheme}, we easily see that $(\SO_X)^{s_c}_Y$ is a presheaf. To get now that it is a sheaf use that $\pi_*(\SO_Y)$ is a sheaf.
\end{proof}

\begin{rem}
Let $\pi:Y\to X$ be a finite morphism with $Y$ an integral scheme over $\R$ and assume $X=\Sp A$ and $Y=\Sp B$ are affine. From above we get 
$$(\SO_X)^{s_c}_Y=A^{s_c}_B$$
\end{rem}

\begin{lem} \label{lemcond}
Let $\pi:Y\to X$ be a finite morphism with $Y$ an integral scheme over $\R$. Let $x\in X$. We have 
$$\m_x(\pi_*\SO_Y)\subset (((\SO_X)^{s_c}_Y)_x:(\pi_*\SO_Y)_x)$$
\end{lem}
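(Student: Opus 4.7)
The plan is to establish the two inclusions
$$\m_x\,(\pi_*\SO_Y)_x\subset\JRadCe((\pi_*\SO_Y)_x)\subset((\SO_X)^{s_c}_Y)_x$$
and then to use that $\m_x\,(\pi_*\SO_Y)_x$ is by construction an ideal of $(\pi_*\SO_Y)_x$ in order to conclude that it lies in the conductor.

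For the first inclusion I would exploit that $\pi$ is finite: the stalk $(\pi_*\SO_Y)_x$ is a finite, hence integral, extension of the local ring $\SO_{X,x}$. By lying-over for integral extensions (Proposition~\ref{lying-over}, 2)), every maximal ideal of $(\pi_*\SO_Y)_x$ contracts to a maximal ideal of $\SO_{X,x}$, which must be $\m_x$. Hence every maximal, and in particular every maximal central, ideal $\m$ of $(\pi_*\SO_Y)_x$ contains $\m_x$, so $\m_x\,(\pi_*\SO_Y)_x\subset\m$. Intersecting over $\CentMax((\pi_*\SO_Y)_x)$ (with the convention that an empty intersection is the whole ring $(\pi_*\SO_Y)_x$, in which case the lemma is automatic) yields the first inclusion. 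The second inclusion is immediate from Definition~\ref{defcentralgluingscheme} applied at the point $x$: any element of $\JRadCe((\pi_*\SO_Y)_x)$ satisfies the stalkwise condition $f_x\in\SO_{X,x}+\JRadCe((\pi_*\SO_Y)_x)$, so it belongs to $((\SO_X)^{s_c}_Y)_x$.

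With both inclusions at hand the conclusion is formal. For any $f\in\m_x\,(\pi_*\SO_Y)_x$ and any $g\in(\pi_*\SO_Y)_x$, the product $fg$ remains in $\m_x\,(\pi_*\SO_Y)_x$ (which is an ideal of $(\pi_*\SO_Y)_x$), and therefore in $((\SO_X)^{s_c}_Y)_x$ by what precedes. This says exactly that $\m_x\,(\pi_*\SO_Y)_x$ sits inside $(((\SO_X)^{s_c}_Y)_x:(\pi_*\SO_Y)_x)$. The main delicate point in this plan is the second inclusion $\JRadCe((\pi_*\SO_Y)_x)\subset((\SO_X)^{s_c}_Y)_x$ interpreted at the level of the sheaf stalk: one has to realize germs at $x$ by explicit sections of $\pi_*\SO_Y$ over a small enough affine neighborhood of $x$ that still satisfy the defining stalkwise condition, and the finiteness of $\pi$ is essential for this local-to-global reduction.
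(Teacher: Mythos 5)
Your proof is correct and follows essentially the same line as the paper's: the core is the chain $\m_x(\pi_*\SO_Y)_x\subset\JRadCe((\pi_*\SO_Y)_x)\subset((\SO_X)^{s_c}_Y)_x$ followed by the observation that $\m_x(\pi_*\SO_Y)_x$ is an ideal of $(\pi_*\SO_Y)_x$, hence lands in the conductor. The only cosmetic difference is that you re-derive the inclusion $\m_x(\pi_*\SO_Y)_x\subset\JRad((\pi_*\SO_Y)_x)$ from lying-over (Proposition~\ref{lying-over}, 2)) where the paper simply cites Matsumura's lemma, and you rightly flag (as the paper does not) that identifying the stalk $((\SO_X)^{s_c}_Y)_x$ with $\SO_{X,x}+\JRadCe((\pi_*\SO_Y)_x)$ is the one step that actually uses finiteness of $\pi$.
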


\begin{proof}
By \cite[Lem. 2, Ch. 2, Sect. 9]{Ma} and Definition \ref{defcentralgluingscheme}, we get 
$$\m_x(\pi_*\SO_Y)\subset \JRad  (\pi_*\SO_Y)_x\subset \JRadCe (\pi_*\SO_Y)_x\subset ((\SO_X)^{s_c}_Y)_x$$
\end{proof}

\begin{thm} \label{thmcoherent}
Let $\pi:Y\to X$ be a finite morphism with $Y$ an integral scheme over $\R$. Then $(\SO_X)^{s_c}_Y$ is a coherent sheaf on $X$.
\end{thm}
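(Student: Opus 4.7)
The plan is to prove coherence locally on $X$. Since coherence is a local property, I reduce to the case of an affine open $U = \Sp A \subset X$, over which $\pi^{-1}(U) = \Sp B$ with $A \to B$ a finite extension of domains ($A$ is noetherian, being a finitely generated $\R$-algebra). On $U$, the sections of $(\SO_X)^{s_c}_Y$ are exactly $A^{s_c}_B$: for any $\p \in \Sp A$ with $\p \notin \RCent A$, Proposition \ref{contractcentral} guarantees no central prime of $B$ contracts to $\p$, so $\JRadCe(B_\p) = B_\p$ and the local condition at $\p$ is automatic, matching Definition \ref{defgluingAB}. The goal then becomes showing $(\SO_X)^{s_c}_Y|_U = \widetilde{A^{s_c}_B}$, the sheaf associated to the $A$-module $A^{s_c}_B$, and then concluding via finite generation.

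The technical core is the localization identity $(A^{s_c}_B)_f = (A_f)^{s_c}_{B_f}$ for every $f \in A$. The forward inclusion is immediate: for $b \in A^{s_c}_B$ and $\p A_f \in \RCent(A_f)$ coming from $\p \in \RCent A$ with $f \notin \p$, one has $(A_f)_{\p A_f} = A_\p$ and $(B_f)_{\p A_f} = B_\p$, so the condition on $b$ transfers directly to $b/f^n$. For the reverse inclusion, given $c = b/f^n \in (A_f)^{s_c}_{B_f}$, I will show that $b' := fb \in A^{s_c}_B$, which gives $c = b'/f^{n+1} \in (A^{s_c}_B)_f$. Checking the defining condition at each $\p \in \RCent A$: if $f \notin \p$ then $f$ is a unit in $A_\p$ and the hypothesis at $\p A_f$ yields $b_\p \in A_\p + \JRadCe(B_\p)$, hence so does $fb_\p$; if $f \in \p$, there are two subcases: either no central prime of $B$ lies over $\p$, in which case $\JRadCe(B_\p) = B_\p$ trivially; or the central primes $\q_1,\ldots,\q_t$ of $B$ over $\p$ all contain $f$ (since $f \in \p = \q_i \cap A \subset \q_i$), which forces $f \in \bigcap_i \q_i B_\p = \JRadCe(B_\p)$, and consequently $fb_\p \in \JRadCe(B_\p) \subset A_\p + \JRadCe(B_\p)$.

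With the localization identity in hand, $(\SO_X)^{s_c}_Y|_U$ agrees with $\widetilde{A^{s_c}_B}$ on the basis of principal open subsets $D(f) \subset U$, hence is the quasi-coherent sheaf associated to $A^{s_c}_B$. Finite generation is free: $A^{s_c}_B$ is an $A$-submodule of $B$, and $B$ is a finite $A$-module by the finiteness of $\pi$, so noetherianity of $A$ forces $A^{s_c}_B$ to be a finitely generated $A$-module. Thus $(\SO_X)^{s_c}_Y|_U$ is a quasi-coherent $\SO_U$-module of finite type on the locally noetherian affine scheme $U$, hence coherent, and coherence on all of $X$ follows by gluing.

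The main obstacle is the reverse inclusion in the localization identity, specifically the subcase $f \in \p \in \RCent A$ with central primes of $B$ lying over $\p$: the whole argument hinges on the observation that every such central prime automatically contains $f$, which permits absorbing $f$ into the central Jacobson radical and taking advantage of the multiplication by $f$ in $b' = fb$.
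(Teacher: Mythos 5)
Your proof is correct and follows essentially the same route as the paper, repackaged more cleanly. The paper establishes quasi-coherence by verifying Grothendieck's criteria on the affine $\Sp A$: it extends $f^{n-1}s$ to a section $t$ of $\pi_*\SO_Y$ over $U$ and then shows $ft$ lands in $(\SO_X)^{s_c}_Y$ via a conductor inclusion (Lemma~\ref{lemcond}), namely $\m_x(\pi_*\SO_Y)\subset ((\SO_X)^{s_c}_Y)_x$. You instead prove the localization identity $(A^{s_c}_B)_f=(A_f)^{s_c}_{B_f}$ directly and identify $(\SO_X)^{s_c}_Y|_U$ with the sheaf associated to the $A$-module $A^{s_c}_B$. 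The technical core is identical: your observation in the case $f\in\p$ --- that $f$ lies in every central prime $\q_i$ of $B$ over $\p$ (because $f\in\p=\q_i\cap A$), hence $f\in\JRadCe(B_\p)$ --- is precisely what the paper's Lemma~\ref{lemcond} encodes via $\m_x(\pi_*\SO_Y)\subset\JRad((\pi_*\SO_Y)_x)\subset\JRadCe((\pi_*\SO_Y)_x)$. Your framing makes this conductor mechanism and the compatibility of central primes with localization (both contraction, via Proposition~\ref{contractcentral}, and expansion, via the unchanged fraction field) fully explicit, which is arguably a cleaner way to see why the sheaf is quasi-coherent; the finiteness step (submodule of a noetherian module) coincides with the paper's Lemma~\ref{lemintermed}.
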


\begin{proof}
It is sufficient to prove $(\SO_X)^{s_c}_Y$ is quasi-coherent since the finiteness property is given by Lemma \ref{lemintermed}. Since the property to be quasi-coherent can be verified locally, we assume $X=\Sp A$, $Y=\Sp B$ with $A$ and $B$ denoting respectively the coordinate rings of $X$ and $Y$. We have now to check the two properties c 1) and  c 2) given by Grothendieck in \cite[I Thm. 1.4.1]{Gr1}.

Let $f\in A$ and set $\D (f)=\{x\in X\mid f\not\in\p_x\}$ (here we identify $x\in X$ with the corresponding prime ideal $\p_x\in\Sp A$). Let $U$ be an open subset of $X$ such that $\D(f)\subset U$ and let $s\in H^0(\D(f),(\SO_X)^{s_c}_Y)$. We have to show that there exists $n\in\N$ such that $(f_{\mid \D(f)})^ns$ extends as a section in $H^0(U,(\SO_X)^{s_c}_Y)$.

For $x\in\D(f)$, we have $s_x\in ((\SO_X)^{s_c}_Y)_x=A_{\p_x}+\JRadCe B_{\p_x}\subset (\pi_*\SO_Y)_x=B_{\p_x}$. Since $\pi_*\SO_Y$ is quasi-coherent then, by \cite[Thm. 1.4.1, d1)]{Gr1} there exists $n\in\N^*$ such that $(f_{\mid \D(f)})^{n-1}s$ extends as a section $t\in H^0(U,\pi_*\SO_Y)$. So, if $x\in\D(f)$ then $t_x\in((\SO_X)^{s_c}_Y)_x=A_{\p_x}+\JRadCe B_{\p_x}$ and if $x\in U\setminus\D(f)$ then $t_x\in (\pi_*\SO_Y)_x=B_{\p_x}$.

We claim that $f_{\mid U}t\in H^0(U,(\SO_X)^{s_c}_Y)$. If $x\in\D(f)$ then clearly $f_xt_x=f_x^n s_x\in  ((\SO_X)^{s_c}_Y)_x$. Assume now $x\in U\setminus\D(f)$. Since $f_x\in \m_x=\p_xA_{\p_x}$ then it follows from Lemma \ref{lemcond} that $f_xt_x\in ((\SO_X)^{s_c}_Y)_x$ and we have proved the claim.

It follows that $(f_{\mid \D(f)})^ns$ extends as a section in $H^0(U,(\SO_X)^{s_c}_Y)$ and we have checked the property c 1) of  \cite[I Thm. 1.4.1]{Gr1}. Since $\pi_*\SO_Y$ is quasi-coherent then obviously $(\SO_X)^{s_c}_Y$ satisfies the property c 2) of  \cite[I Thm. 1.4.1]{Gr1} and the proof is done.
\end{proof}

From \cite[II Prop. 1.3.1]{Gr2} and Theorem \ref{thmcoherent}, we get:
\begin{cor} \label{corthmcoherent1}
Let $\pi:Y\to X$ be a finite morphism with $Y$ an integral scheme over $\R$. There exists an integral scheme $X^{s_c}_Y$ over $\R$, called the central gluing of $Y$ over $X$, with a finite and birational morphism $\pi^{s_c}_Y:X^{s_c}_Y\to Y$ factorizing $\pi$ such that $(\pi^{s_c}_Y)_* \SO_{X^{s_c}_Y}=(\SO_X)^{s_c}_Y$ i.e $$X^{s_c}_Y=\Sp(\SO_X)^{s_c}_Y$$
\end{cor}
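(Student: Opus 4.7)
The plan invokes Grothendieck's construction of a scheme from a quasi-coherent sheaf of algebras, applied to the coherent sheaf $(\SO_X)^{s_c}_Y$ supplied by Theorem \ref{thmcoherent}. Concretely, by \cite[II Prop. 1.3.1]{Gr2} there exists a scheme $X^{s_c}_Y$ affine over $X$ together with a morphism $\pi^{s_c}_Y\colon X^{s_c}_Y \to X$ such that $(\pi^{s_c}_Y)_* \SO_{X^{s_c}_Y} = (\SO_X)^{s_c}_Y$; since this algebra is coherent, $\pi^{s_c}_Y$ is in fact finite. The inclusion of $\SO_X$-algebras $(\SO_X)^{s_c}_Y \hookrightarrow \pi_*\SO_Y$ together with the universal property of relative Spec produces a canonical $X$-morphism $Y \to X^{s_c}_Y$ whose composition with $\pi^{s_c}_Y$ recovers $\pi$, providing the desired factorization.

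The next step is to verify integrality of $X^{s_c}_Y$. This is local on $X$, so I would choose an affine open cover by opens $U = \Sp A$ with $\pi^{-1}(U) = \Sp B$ (available because $\pi$ is finite, hence affine). By Definition \ref{defcentralgluingscheme} and the remark immediately following it, the ring of sections of $(\SO_X)^{s_c}_Y$ over $U$ is precisely $A^{s_c}_B$. Since $B$ is a domain (as $Y$ is integral) and $A^{s_c}_B$ is a subring of $B$, $A^{s_c}_B$ is itself a domain, so $X^{s_c}_Y$ is locally the spectrum of a domain. Global irreducibility then follows from that of $Y$ via the dominant morphism $Y \to X^{s_c}_Y$ constructed in the previous step, concluding that $X^{s_c}_Y$ is integral.

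It remains to verify finiteness and birationality of the intermediate morphism in the situation of the paper. Locally, $A^{s_c}_B \hookrightarrow B$ with $B$ a finite $A$-module and $A \subset A^{s_c}_B$ makes $B$ finite over $A^{s_c}_B$, yielding finiteness of $Y \to X^{s_c}_Y$; and in the birational setting relevant to the paper, where $\K(A) = \K(B)$, the fraction field of $A^{s_c}_B$ coincides with both, giving birationality. The main technical point, which I view as the principal obstacle, is to ensure that these local affine descriptions $\Sp A^{s_c}_B$ glue coherently into the global scheme $X^{s_c}_Y$; this is guaranteed by the sheaf property of the central gluing (Proposition \ref{sheaf}) combined with the functoriality of the relative Spec construction, but it should be spelled out explicitly so that the local identifications on overlaps $\Sp A_1^{s_c}_{B_1} \cap \Sp A_2^{s_c}_{B_2}$ are forced by the uniqueness in \cite[II Prop. 1.3.1]{Gr2}.
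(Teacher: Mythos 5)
Your plan matches the paper's proof, which consists solely of citing \cite[II Prop.\ 1.3.1]{Gr2} together with Theorem \ref{thmcoherent}: apply relative $\Sp$ to the coherent $\SO_X$-algebra $(\SO_X)^{s_c}_Y$, deduce finiteness of the structure morphism from coherence, and obtain the factorization from the inclusion $(\SO_X)^{s_c}_Y\hookrightarrow\pi_*\SO_Y$. You go further than the paper in spelling out integrality of $X^{s_c}_Y$: local reducedness from $A^{s_c}_B$ being a subring of the domain $B$, and global irreducibility from the dominant (indeed finite surjective) morphism $Y\to X^{s_c}_Y$. This is a genuine gap in the paper's one-line justification and you fill it correctly. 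Your concern about the local descriptions gluing coherently is, however, a non-issue: once $(\SO_X)^{s_c}_Y$ is known to be a quasi-coherent $\SO_X$-algebra (Proposition \ref{sheaf} plus Theorem \ref{thmcoherent}), the relative $\Sp$ construction produces the scheme with no residual compatibility to check; the identifications over overlaps of affine opens are built into the construction, not something one has to verify afterward.

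Two small clarifications. First, the arrow in the statement ``$\pi^{s_c}_Y\colon X^{s_c}_Y\to Y$'' is a misprint for $\pi^{s_c}_Y\colon X^{s_c}_Y\to X$; the formula $(\pi^{s_c}_Y)_*\SO_{X^{s_c}_Y}=(\SO_X)^{s_c}_Y$, which is a sheaf on $X$, forces the target to be $X$, exactly as you construct it. Second, for birationality of $\pi^{s_c}_Y$ you invoke the hypothesis $\K(A)=\K(B)$, but that assumption is stronger than what is actually available or needed. The cleaner argument is the one implicit in the definition of the central gluing: when $\K(X)$ is a real field the null ideal $(0)$ of $A$ is a central prime (Proposition \ref{existcentralideal}), and the defining condition at $\p=(0)$ reads $b\in A_{(0)}+\JRadCe(B_{(0)})=\K(A)+\{0\}$, so $A^{s_c}_B\subset\K(A)\cap B$; combined with $A\subset A^{s_c}_B$ this yields $\K(A^{s_c}_B)=\K(A)$ and hence $\K(X^{s_c}_Y)=\K(X)$, with no need for $\pi$ itself to be birational. (When $\K(X)$ is not real one has $\RCent A=\emptyset$ on every affine open, so $A^{s_c}_B=B$ and $X^{s_c}_Y=Y$, in which case the statement's birationality claim for $X^{s_c}_Y\to X$ requires the additional hypothesis that $\pi$ be birational; this mild imprecision is in the paper, not in your reconstruction.)
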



\subsubsection{Central seminormalization of a scheme over $\R$ in another one}

From Theorem \ref{pb1}, Propositions \ref{existcentralideal} and above constructions we state the following definition:
\begin{defn}
Let $\pi:Y\to X$ be a dominant morphism of finite type with $Y$ an integral scheme over $\R$. The central seminormalization of $\SO_X$ in $\pi_*(\SO_Y)$ is the $\SO_X$-algebra
denoted by $(\SO_X)^{s_c,*}_Y$ defined by:
\begin{enumerate}
\item[1)] $(\SO_X)^{s_c,*}_Y=(\SO_X)^{s_c}_{\widetilde{X}_Y'}$ if $X_{reg}(\R)\not=\emptyset$.
\item[2)] $(\SO_X)^{s_c,*}_Y=(\SO_X)_{\pi_*(\SO_Y)}'$ if $X_{reg}(\R)=\emptyset$.
\end{enumerate}
\end{defn}

From above constructions and results we are able to prove the existence of the central seminormalization of a real scheme in another one:
\begin{thm} \label{CSEPschemes}
Let $\pi:Y\to X$ be dominant morphism of finite type with $Y$ an integral scheme over $\R$. 
\begin{enumerate}
\item[1)] The $\SO_X$-algebra $(\SO_X)^{s_c,*}_Y$, is a coherent sheaf on $X$.
\item[2)] There exists an integral scheme $X^{s_c,*}_Y$ over $\R$, called the central seminormalization of $X$ in $Y$, with a finite morphism $\pi^{s_c,*}_Y:X^{s_c,*}_Y\to Y$ factorizing $\pi$ such that $(\pi^{s_c,*}_Y)_* \SO_{X^{s_c,*}_Y}=\SO^{s_c,*}_Y$ i.e $$X^{s_c,*}_Y=\Sp\SO^{s_c,*}_{Y}$$
\end{enumerate}
\end{thm}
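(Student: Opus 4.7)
The plan is to prove coherence of $(\SO_X)^{s_c,*}_Y$ by splitting into the two cases of its definition, and then derive part (2) by the standard relative Spec construction. The key preliminary observation, which ensures the case split is uniform over all affine opens, is that by Proposition \ref{existcentralideal} the condition $X_{reg}(\R)\ne\emptyset$ is equivalent to $\K(X)$ being a real field; since $X$ is integral, every non-empty affine open has coordinate ring contained in $\K(X)$, so this is in turn equivalent to every such coordinate ring being a real domain. Thus the same clause of the definition of $(\SO_X)^{s_c,*}_Y$ applies uniformly on $X$, and the problem reduces to verifying coherence.

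If $X_{reg}(\R)=\emptyset$, then $(\SO_X)^{s_c,*}_Y=(\SO_X)'_{\pi_*(\SO_Y)}$, which is coherent by Subsection \ref{subsecNandB}. If $X_{reg}(\R)\ne\emptyset$, then $(\SO_X)^{s_c,*}_Y=(\SO_X)^{s_c}_{\widetilde{X}_Y'}$; since the birational normalization morphism $\widetilde{X}_Y'\to X$ is finite (again by Subsection \ref{subsecNandB}), Theorem \ref{thmcoherent} applies directly and yields coherence. In both cases, for any affine open $U=\Sp A\subset X$ with $B=H^0(\pi^{-1}(U),\SO_Y)$, Theorem \ref{pb1} gives $H^0(U,(\SO_X)^{s_c,*}_Y)=A^{s_c,*}_B$, showing compatibility of the sheaf-theoretic and ring-theoretic constructions and completing (1).

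For part (2), we apply \cite[II Prop. 1.3.1]{Gr2} to the coherent $\SO_X$-algebra $(\SO_X)^{s_c,*}_Y$ to produce the relative spectrum $X^{s_c,*}_Y=\Sp(\SO_X)^{s_c,*}_Y$ together with a finite morphism $\pi^{s_c,*}_Y:X^{s_c,*}_Y\to X$ satisfying $(\pi^{s_c,*}_Y)_*\SO_{X^{s_c,*}_Y}=(\SO_X)^{s_c,*}_Y$; the factorization of $\pi$ through $\pi^{s_c,*}_Y$ reflects the inclusion $(\SO_X)^{s_c,*}_Y\subset\pi_*(\SO_Y)$. Integrality is checked locally: on each $U=\Sp A$, the corresponding affine piece is $\Sp A^{s_c,*}_B$ with $A^{s_c,*}_B$ a domain by construction (Theorem \ref{pb1}), so $X^{s_c,*}_Y$ is reduced and irreducible, with generic point inherited from $Y$. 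The main obstacle is really already discharged by Theorem \ref{thmcoherent}: once one knows the central gluing along a finite morphism is coherent, combining it with the finiteness of the birational normalization $\widetilde{X}_Y'\to X$ gives the global coherence, and the rest is formal relative-Spec bookkeeping.
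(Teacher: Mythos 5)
Your proof is correct and follows essentially the same route as the paper: split on whether $X_{reg}(\R)$ is empty, use coherence of the integral closure in the non-real case, invoke Theorem \ref{thmcoherent} together with the finiteness of $\widetilde{X}_Y'\to X$ in the real case, and then pass to the relative $\Sp$ via \cite[II Prop. 1.3.1]{Gr2}. Your added observations — that the case split is constant over $X$ because $X_{reg}(\R)\ne\emptyset$ is equivalent to $\K(X)$ being a real field, and the local check of integrality via Theorem \ref{pb1} — are not spelled out in the paper's terse proof but are implicit in it and are both accurate.
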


\begin{proof}
If $X_{reg}(\R)=\emptyset$ then $(\SO_X)^{s_c,*}_Y$, is a coherent sheaf on $X$ by \cite[Lem. 52.15]{STPmorph}. From \ref{subsecNandB} and Theorem \ref{thmcoherent} then
$(\SO_X)^{s_c,*}_Y$, is a coherent sheaf on $X$ in the case $X_{reg}(\R)\not=\emptyset$. The rest of the proof follows from  \cite[II Prop. 1.3.1]{Gr2}.
\end{proof}

\begin{rem}
Let $\pi:Y\to X$ be a dominant morphism of finite type with $Y$ an integral scheme over $\R$ and assume $X=\Sp A$ and $Y=\Sp B$ are affine. From Theorems \ref{pb1} and \ref{CSEPschemes} we get 
$$\SO^{s_c,*}_Y=A^{s_c,*}_B$$
\end{rem}

\begin{defn}
Let $\pi:Y\to X$ be a dominant morphism of finite type with $Y$ an integral scheme over $\R$. We say that $\pi$ is $s_c$-subintegral or centrally subintegral if $\pi$ is finite and the induced map $\RCent \SO_Y\to\RCent \SO_X$ is bijective and equiresidual ($\forall y\in\RCent Y$ we have $k(\pi(y))\simeq k(y)$).
\end{defn}

\begin{rem}
This notion of centrally subintegral morphism has similarities with the concept of universal homeomorphism introduced by Grothendieck \cite[I 3.8]{Gr1}.
\end{rem}

Since a point $x\in X$ is central if and only if it is a central point in an affine neighborhood of $x$ then we derive from Theorems \ref{scgeom} and \ref{thmCSEPvariety}:
\begin{thm}
Let $\pi:Y\to X$ be a dominant morphism of finite type with $Y$ an integral scheme over $\R$. A morphism $Z\to X$ factorizing $\pi$, with $Z$ an integral scheme over $\R$, is centrally subintegral if and only if it factorizes through $\pi^{s_c,*}_Y:X^{s_c,*}_Y\to X$.
\end{thm}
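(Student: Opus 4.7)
The plan is to reduce to the affine case, where Theorem \ref{thmCSEPvariety} together with Theorem \ref{pb1} gives the universal property of $X^{s_c,*}_Y$, and then glue the resulting ring-theoretic inclusions into a morphism of schemes.

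First, I would verify the compatibility of the central seminormalization with affine restriction: for any affine open $U = \Sp A \subset X$, the preimage $(\pi^{s_c,*}_Y)^{-1}(U)$ is affine with coordinate ring $A^{s_c,*}_B$, where $B = H^0(\pi^{-1}(U), \SO_Y)$. This follows by combining the construction of the coherent sheaf $(\SO_X)^{s_c,*}_Y$ in Theorem \ref{CSEPschemes} with the affine identification of Theorem \ref{thmCSEPvariety}. The dichotomy $X_{reg}(\R) = \emptyset$ vs.\ $X_{reg}(\R) \neq \emptyset$ in the definition of $(\SO_X)^{s_c,*}_Y$ is consistent with affine restriction: by Proposition \ref{existcentralideal} both cases are governed by whether $\K(X)$ is real, and $\K(X) = \K(U)$ for any non-empty affine open $U$ of the irreducible $X$. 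The analogous statement holds for $Z \to X$: since this morphism is finite (being centrally subintegral), the preimage of $U$ in $Z$ is affine, say $V = \Sp A^Z$, with $A \subset A^Z \subset B$.

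For the only-if direction, assume $Z \to X$ is centrally subintegral. Finiteness, bijectivity of the central spectrum, and equiresiduality of $\RCent$ are all local on the base, so on each affine $U$ the extension $A \to A^Z$ is $s_c$-subintegral, and Theorem \ref{pb1} yields the inclusion $A^Z \subset A^{s_c,*}_B$ of $A$-subalgebras of $B$. These inclusions are canonical (determined by the containment in $B$), so they agree on overlaps and glue to an inclusion of coherent $\SO_X$-subalgebras of $\pi_*\SO_Y$: the pushforward of $\SO_Z$ to $X$ is contained in $(\SO_X)^{s_c,*}_Y$. The relative spectrum construction then gives the desired morphism $X^{s_c,*}_Y \to Z$ over $X$, exhibiting the claimed factorization of $\pi^{s_c,*}_Y$ through $Z \to X$.

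For the if direction, assume such a morphism $X^{s_c,*}_Y \to Z$ over $X$ exists. On each affine $U = \Sp A$ we obtain ring inclusions $A \subset A^Z \subset A^{s_c,*}_B$, with $A \to A^{s_c,*}_B$ being $s_c$-subintegral by Theorem \ref{pb1}. Lemma \ref{propcentsubint} then forces $A \to A^Z$ to be $s_c$-subintegral. Globalization is immediate: $Z \to X$ is finite, as an intermediate to the finite morphism $\pi^{s_c,*}_Y$, while bijectivity and equiresiduality of $\RCent \SO_Z \to \RCent \SO_X$ follow from the corresponding statements on each affine chart.

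The main obstacle is establishing the affine-local description of $(\SO_X)^{s_c,*}_Y$ used throughout. This requires unwinding the construction through the birational-integral closure $\widetilde{X}_Y'$ and the central gluing, and checking that forming sections over an affine open commutes with each step. Once this local description is secured, the rest is a clean combination of the ring-theoretic universal property of Theorem \ref{pb1} with standard sheaf-theoretic gluing, and the statement follows.
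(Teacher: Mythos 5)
Your proposal is correct and follows essentially the same route as the paper's (very terse) proof: reduce to affine charts using that centrality is a local condition, invoke the affine universal property of $A^{s_c,*}_B$ established in Theorem \ref{pb1} (the paper cites the equivalent regulous characterization of Theorem \ref{scgeom} instead), and glue the resulting subalgebras of $\pi_*\SO_Y$. The only point you wave at that deserves one more line is the finiteness of $Z\to X$ in the "if" direction: one should note that $X^{s_c,*}_Y\to Z$ is dominant and proper over $X$, hence surjective, so that finiteness of $Z\to X$ follows from properness plus quasi-finiteness of the composite; but the paper elides this as well.
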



\end{document}